 \DeclareMathOperator*{\argmin}{arg\,min}
\newcommand\una[1]{{\color{blue}{{#1}}}}
\newcommand{\e}{\mathrm{E}}
\title{New M-estimator of the leading principal component}
\author{Joni Virta, Una Radoji\v{c}i\'c, Marko Voutilainen}
\newtheorem{theorem}{Theorem}
\newtheorem{lemma}{Lemma}
\newtheorem{corollary}{Corollary}
\begin{document}
\maketitle

\begin{abstract}
    We study the minimization of the non-convex and non-differentiable objective function $v \mapsto \mathrm{E} ( \| X - v \| \| X + v \| - \| X \|^2 )$ in $\mathbb{R}^p$. In particular, we show that its minimizers recover the first principal component direction of elliptically symmetric $X$ under specific conditions. The stringency of these conditions is studied in various scenarios, including a diverging number of variables $p$. We establish the consistency and asymptotic normality of the sample minimizer. We propose a Weiszfeld-type algorithm for optimizing the objective and show that it is guaranteed to converge in a finite number of steps. The results are illustrated with two simulations.
\end{abstract}

\section{Introduction}\label{sec:introduction}

Let $P$ be a probability distribution in $\mathbb{R}^p$ that has expected value zero. In this work we study the minimization of the map $f_P: \mathbb{R}^p \mapsto \mathbb{R}$ defined as
\begin{align}\label{eq:main_concept}
    f_P(v) = \mathrm{E} \left( \| X - v \| \| X + v \| - \| X \|^2 \right),
\end{align}
where $X \sim P$. Estimators obtained as minimizers of objective functions such as $f_P$ are typically called $M$-estimators or argmax-estimators in the literature, see \cite{van2000asymptotic}. From the form of the objective function \eqref{eq:main_concept}, it is not immediately clear which aspect of the distribution $P$ it measures. Some intuition can be gained from the following two observations: (a) The objective function can also be written as $f_P(v) = \mathrm{E}( \| X X' - v v' \|_* - \| X X' \|_* )$, where $\| \cdot \|_*$ denotes the nuclear norm; see Lemma \ref{lem:nuclear_norm} in Appendix \ref{sec:proofs}. Consequently, if $P$ is strongly concentrated to a one-dimensional subspace, the minimizers of $f_P(v)$ should recover the direction of this subspace. (b) For a fixed vector $v \in \mathbb{R}^p$, the map $x \mapsto \| x - v \| \| x + v \|$ is on the $R$-radius sphere $\{ x \in \mathbb{R}^p \mid \| x \| = R \}$ minimized by $x = 
 \pm R v/\| v \|$, see Lemma \ref{lem:interpretation} in Appendix \ref{sec:proofs}. This again suggests that, if most of the variation in $P$ is concentrated along a single direction, then the minimizer of $f_P$ is likely to be found among vectors having this direction. Put in another way, both (a) and (b) imply that minimizing $f_P$ can be expected to find the leading principal component direction of the distribution~$P$.


As one of our main results we quantify the above heuristic and show that, given a minimizer $v_0$ of $f_P$, the corresponding unit-length vector $v_0/\| v_0 \|$ does, \textit{under specific conditions}, recover the direction of the leading principal component of $P$, whereas, the norm $\| v_0 \|$ serves as a measure of deviation of $P$ along this direction. Indeed, when $p = 1$, it is easy to check that $f_P$ is minimized by $\pm v_0 \in \mathbb{R}$ where $v_0$ is the median absolute deviation (MAD) of the univariate~$X$. Moreover, unlike regular PCA, which assumes finite second moments, the studied method requires fewer moments, depending on the result in question, see Sections~\ref{sec:population}--\ref{sec:sample}.

\begin{figure}
    \centering
    \includegraphics[width=1\textwidth]{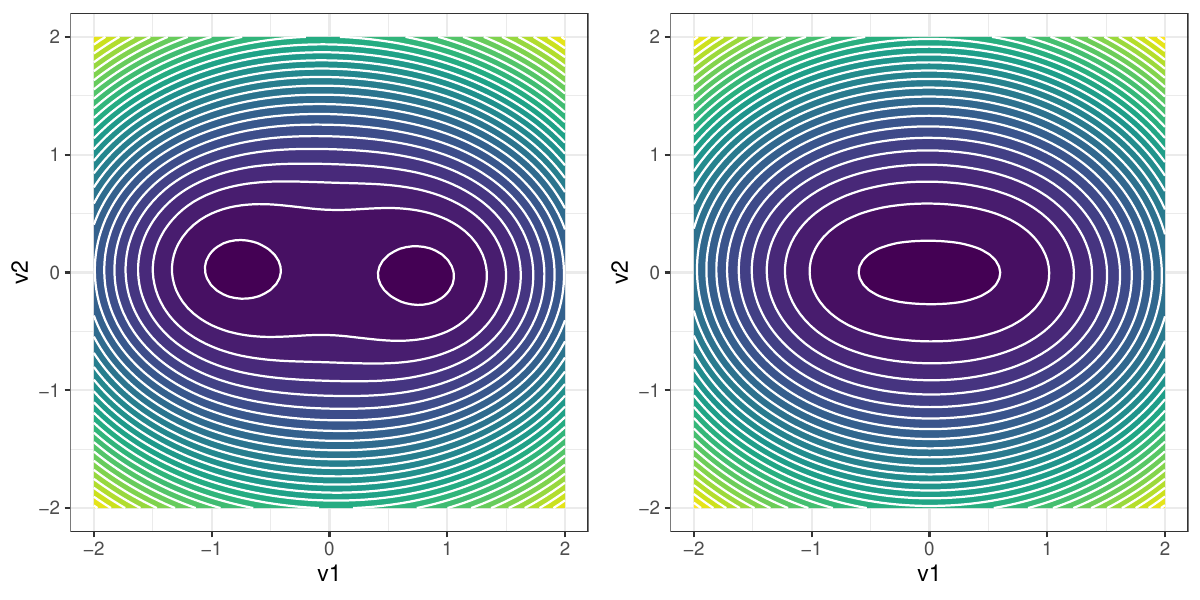}
    \caption{The contour plot of $f_P(v)$ as a function of $v$ based on a sample of size $n = 2000$. In the left panel, $X \sim \mathcal{N}_2(0, \mathrm{diag}(3, 1))$, whereas, in the right panel $X \sim \mathcal{N}_3(0, \mathrm{diag}(3, 1, 1))$ and we have restricted to the plane $v = (v_1, v_2, 0).$}
    \label{fig:intro}
\end{figure}

To illustrate the ``specific conditions'' mentioned above, consider Figure \ref{fig:intro}. In the left panel we show the contour plot of $f_P(v)$ as a function of $v$ based on a sample of size $n = 2000$ when $p = 2$ and $X \sim \mathcal{N}_2(0, \mathrm{diag}(3, 1))$. In this case, the direction of the first principal component is $(1, 0)$ and we observe that the two global minima of $f_P$ are indeed obtained roughly along two mirrored points on the $v_1$-axis. The right panel of Figure \ref{fig:intro} shows, analogously, the contour plot of $(v_1, v_2) \mapsto f_P(v_1, v_2, 0)$ when $X \sim \mathcal{N}_3(0, \mathrm{diag}(3, 1, 1))$. In this case, one would expect two minima to be again found on the $x$-axis, but, instead, it appears that the sole global minimizer lies at the origin. Our results below reveal that $f_P$ is unable to capture the principal component direction if the standard deviation of $X$ along this direction is too small. Moreover, we show that this threshold is increasing in the dimension $p$, see Theorem \ref{lemma:tau}, and grows at the rate $\sqrt{p}$. In the scenario of Figure~\ref{fig:intro}, where the standard deviation is $\sqrt{3} \approx 1.732$ in both cases, the exact thresholds are $1$ and $\approx 1.815$ for the cases $p = 2$ and $p = 3$, respectively, explaining the observed behavior.


The form of the objective function $f_P$ resembles that of the \textit{spatial median}, a classical robust measure of location of $P$, which is defined as any minimizer in $\mathbb{R}^p$ of the function
\begin{align*}
    \mu \mapsto \mathrm{E} \left( \| X - \mu \| - \| X \| \right),
\end{align*}
where $X \sim P$, see, e.g, \cite{milasevic1987uniqueness, mottonen2010asymptotic, paindaveine2021behavior} for works discussing the spatial median. However, despite this similarity, the two concepts turn out to measure wholly different aspects of $P$. Moreover, $f_P$ is, as a non-convex function, significantly more difficult to study. The nuclear norm representation, $f_P(v) = \mathrm{E}( \| X X' - v v' \|_* - \| X X' \|_* )$, also resembles the objective function of the \textit{median covariation matrix} \citep{cardot2017fast}, defined as any minimizer of $\Sigma \mapsto \mathrm{E}( \| X X' - \Sigma \|_F - \| X X' \|_F)$ over all positive definite matrices $\Sigma$, where $\| \cdot \|_F$ denotes the Frobenius norm. However, again, the two concepts are fundamentally different; our proposal estimates a singular mode of variation and a median covariation matrix, focusing on global dispersion.


Numerous approaches for conducting PCA are known in the literature, see the review in \cite{zhang2014novel}. However, our approach appears to be, despite the simple form of \eqref{eq:main_concept}, completely novel. We also note that the purpose of this work is \textit{not} to propose the most efficient method of computing PCA that beats all its competitors. Rather, our objective is to paint a comprehensive picture of the properties of the minimizers of the objective function $f_P$. 

The paper is organized as follows. In Section \ref{sec:population} we consider the minimization of $f_P$ on the population level, first for a general data distribution $P$ and then in the case of a specific elliptical family. In Section \ref{sec:sample}, we derive a limiting normal distribution for the minimizer of the sample version of the objective function under the same elliptical family, presenting also the equivalent results for classical PCA to allow an efficiency comparison. A Weiszfeld-type algorithm for minimizing $f_P$ in practice is given in Section \ref{sec:algorithm}, along with a proof of its convergence. In Section \ref{sec:simulation}, we present two simulation studies investigating the generalizability of the results and the efficiency of the estimator, respectively. Proofs of the technical results are collected to Appendices~\ref{sec:proofs}--\ref{sec:taylor}.


\section{Population-level behavior}\label{sec:population}

To define some notation, we let $\mathcal{O}^{p}$ be the set of all $p \times p$ orthogonal matrices and denote by $\mathcal{D}_1^{p} := \{ \sigma \mathrm{diag}(\lambda, 1, \ldots , 1) \mid \sigma > 0, \lambda > 1 \}$ the set of all $p \times p$ positive definite diagonal matrices with first diagonal elements strictly greater than the remaining $p - 1$, which are assumed equal to each other. The unit sphere in $\mathbb{R}^p$ is denoted by $\mathbb{S}^{p - 1}$.

\subsection{Results under general $P$}\label{sec:general_P}


Our first result shows that, despite the formulation of $f_P$ in \eqref{eq:main_concept} as a second-order quantity, $f_P$ is well-defined for any $P$ and requires no moment conditions to exist. This is guaranteed by the inclusion of the term $- \| X \|^2$ in \eqref{eq:main_concept}, which otherwise plays no role in the minimization of $f_P$.

\begin{theorem}\label{theo:moments}
    The quantity $f_P(v)$ is finite for all $v \in \mathbb{R}^p$.
\end{theorem}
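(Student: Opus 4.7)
The plan is to bypass any moment assumption on $P$ by establishing a \emph{deterministic} bound of the form $|\|X-v\|\|X+v\| - \|X\|^2| \leq C_v$ valid for every realization of $X$, with $C_v$ depending only on $v$. Once this pointwise bound is in hand, $f_P(v)$ is the expectation of a uniformly bounded random variable and is automatically finite, irrespective of $P$.

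The first step is the algebraic identity
\begin{equation*}
    \|X-v\|^2 \, \|X+v\|^2 = (\|X\|^2 + \|v\|^2)^2 - 4\langle X, v\rangle^2,
\end{equation*}
obtained by expanding the two squared norms. The right-hand side is nonnegative by Cauchy--Schwarz, so square roots may be taken. Rationalizing then yields
\begin{equation*}
    \|X-v\|\,\|X+v\| - \|X\|^2 = \frac{\|v\|^4 + 2\|X\|^2\|v\|^2 - 4\langle X, v\rangle^2}{\|X-v\|\,\|X+v\| + \|X\|^2}.
\end{equation*}
Cauchy--Schwarz bounds the numerator in absolute value by $\|v\|^4 + 2\|X\|^2\|v\|^2$. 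A short case split then finishes the argument: when $\|X\| \geq \|v\|$, the same inequality applied to the radicand above gives $\|X-v\|\,\|X+v\| \geq \|X\|^2 - \|v\|^2$, so the denominator is at least $\|X\|^2$ and the whole quotient is at most $\|v\|^4/\|X\|^2 + 2\|v\|^2 \leq 3\|v\|^2$; when $\|X\| < \|v\|$, both $\|X-v\|\,\|X+v\|$ and $\|X\|^2$ are individually bounded by $(\|X\|+\|v\|)^2 \leq 4\|v\|^2$, so their difference is controlled directly without recourse to the quotient form.

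The main obstacle is precisely that the naive triangle-inequality estimate $|\|X-v\|\|X+v\| - \|X\|^2| \leq 2\|X\|\|v\| + \|v\|^2$ depends linearly on $\|X\|$ and would therefore only yield finiteness of $f_P(v)$ under a first-moment assumption on $X$. The rationalization step, which produces the cancelling factor $\|X-v\|\,\|X+v\| + \|X\|^2$ in the denominator exactly matched to the $\|X\|^2$-factor in the numerator, is what removes this $\|X\|$-dependence; everything else is elementary arithmetic.
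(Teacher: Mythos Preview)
Your proof is correct. You establish the uniform pointwise bound $|\|X-v\|\|X+v\| - \|X\|^2| \leq 4\|v\|^2$ by rationalizing the difference and splitting into the cases $\|X\| \geq \|v\|$ and $\|X\| < \|v\|$; every step checks out (note that the rationalization is valid for $v \neq 0$ since then the denominator is strictly positive, while $v=0$ gives $f_P(0)=0$ trivially).

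The paper takes a different and shorter route: it invokes the identity $\|X-v\|\|X+v\| = \|XX' - vv'\|_*$ (nuclear norm), so that the integrand is $\|XX' - vv'\|_* - \|XX'\|_*$, and then the reverse triangle inequality for the nuclear norm gives the one-line bound $|\|XX' - vv'\|_* - \|XX'\|_*| \leq \|vv'\|_* = \|v\|^2$. This yields the sharper constant $\|v\|^2$ versus your $4\|v\|^2$, and avoids any case analysis, but it relies on the nuclear-norm lemma established elsewhere in the paper. Your argument, by contrast, is entirely self-contained and elementary, needing only Cauchy--Schwarz and basic algebra. Both approaches deliver exactly what is needed: a bound on the integrand that is independent of $X$, so that no moment assumption on $P$ is required.
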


The next result shows that the map $v \mapsto f_P(v)$ always has a minimizer.

\begin{theorem}\label{theo:existence}
    The map $v \mapsto f_P(v)$ admits a minimizer.
\end{theorem}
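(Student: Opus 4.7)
The plan is to prove existence by combining continuity of $f_P$ with coercivity, meaning $f_P(v) \to \infty$ as $\|v\| \to \infty$. Since $f_P(0) = 0$, coercivity forces the sublevel set $\{v \in \mathbb{R}^p : f_P(v) \leq 0\}$ to be bounded; together with continuity it is then compact, and a continuous function on a compact set attains its minimum there.

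For continuity, I would invoke dominated convergence. Applying the identity $\|X - v\|^2 \|X + v\|^2 = (\|X\|^2 + \|v\|^2)^2 - 4(X'v)^2$ together with Cauchy--Schwarz yields the pointwise bound
\[
\bigl| \|X - v\|\|X + v\| - \|X\|^2 \bigr| \leq \|v\|^2,
\]
which is essentially the same bound that establishes Theorem~\ref{theo:moments}. On any bounded neighbourhood of a fixed $v_0$ this furnishes a constant integrable majorant, so $v_n \to v_0$ implies $f_P(v_n) \to f_P(v_0)$ by dominated convergence.

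The harder step is coercivity. Writing $a = \|X\|^2 + \|v\|^2$ and $b = 2 X'v$, so that $\|X - v\|\|X + v\| = \sqrt{a^2 - b^2}$, the elementary inequality $\sqrt{a^2 - b^2} \geq a - b^2/a$ (valid whenever $a > 0$) gives, for $v \neq 0$,
\[
f_P(v) \;\geq\; \|v\|^2 - \mathrm{E}\!\left[\frac{4 (X'v)^2}{\|X\|^2 + \|v\|^2}\right].
\]
Combining Cauchy--Schwarz with the harmonic-mean bound $xy/(x+y) \leq \min(x, y)$ yields $4(X'v)^2/(\|X\|^2 + \|v\|^2) \leq 4 \min(\|X\|^2, \|v\|^2)$, so the task reduces to showing $\mathrm{E}\min(\|X\|^2, \|v\|^2) = o(\|v\|^2)$. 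Writing $\mathrm{E}[\min(\|X\|^2, T)] = \int_0^T P(\|X\|^2 > t)\,\mathrm{d}t$ and using that $P(\|X\|^2 > t) \to 0$ as $t \to \infty$, a Ces\`aro-type argument delivers $\mathrm{E}[\min(\|X\|^2, \|v\|^2)]/\|v\|^2 \to 0$, and hence $f_P(v) \geq \|v\|^2 (1 - o(1)) \to \infty$.

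The main subtlety is the absence of any moment assumptions on $X$: the quick coercivity estimate $f_P(v) \geq \|v\|^2 - 2\,\mathrm{E}\|X\|^2$, obtained via reverse triangle inequality on the nuclear-norm representation $f_P(v) = \mathrm{E}(\|XX' - vv'\|_* - \|XX'\|_*)$, requires $\mathrm{E}\|X\|^2 < \infty$. The Ces\`aro step is exactly what circumvents this assumption and keeps the conclusion moment-free, consistent with Theorem~\ref{theo:moments}. Continuity together with coercivity then produces a minimizer.
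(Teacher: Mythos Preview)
Your proof is correct and the continuity argument is essentially identical to the paper's. The coercivity step, however, follows a genuinely different route. The paper fixes $R_0 = Q_{0.75}(G)$, the $0.75$-quantile of $\|X\|$, and splits the expectation according to whether $\|X\| \leq R_0$. On the event $\{\|X\| \leq R_0\}$ it uses the crude bound $\|X-v\|\|X+v\| \geq \|v\|^2 - 2\|v\|\,\|X\|$, and on the complement it uses the universal bound $\|X-v\|\|X+v\| - \|X\|^2 \geq -\|v\|^2$; combining the two pieces yields the explicit quadratic lower bound
\[
f_P(v) \geq \bigl(2P\{X \in \mathcal{B}(R_0)\} - 1\bigr)\|v\|^2 - 2\|v\|R_0 - R_0^2,
\]
whose leading coefficient is strictly positive by the choice of quantile. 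Your argument instead exploits the algebraic inequality $\sqrt{a^2 - b^2} \geq a - b^2/a$ together with a Ces\`aro average of the tail function $t \mapsto P(\|X\|^2 > t)$, which is arguably cleaner and more direct for the bare existence statement. The paper's quantile-splitting approach, on the other hand, has the advantage of producing an \emph{explicit, constructively computable} radius $H$ for a ball containing every minimizer; the sample analogue $H_n$ of this radius (see Appendix~\ref{sec:radius}) is precisely the optimization domain used in Section~\ref{sec:sample} to establish consistency and asymptotic normality. So your route is tidier for Theorem~\ref{theo:existence} in isolation, while the paper's route is doing extra work that pays off later in the large-sample theory.
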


Differentiation reveals that $f_P$ is not convex, complicating its theoretical analysis. In particular, it does not seem possible to obtain any results regarding the uniqueness of minimizers in the case of general $P$. Due to this reason, we limit ourselves in the next section to a specific (wide) subset of elliptical distributions, under which several natural results can be derived. However, we first observe that, even for general $P$, the minimizer obeys a natural equivariance property under orthogonal transformations.

\begin{theorem}\label{theo:equivariance}
    Let $v_0$ be a minimizer of $f_P$. Then, for any $O \in \mathcal{O}^p$, the vector $O v_0$ is a minimizer of $f_{P_O}$, where $P_O$ denotes the distribution of $O X$ with $X \sim~P$.
\end{theorem}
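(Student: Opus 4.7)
The plan is to exploit the fact that orthogonal matrices preserve the Euclidean norm, which will show that $f_{P_O}$ is essentially $f_P$ composed with $O^{-1}$, after which equivariance of minimizers is immediate.

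First I would write down the defining expression for $f_{P_O}$ using the change-of-variables $Y = OX \sim P_O$, so that for an arbitrary $w \in \mathbb{R}^p$,
\begin{equation*}
    f_{P_O}(w) = \mathrm{E}\bigl(\|OX - w\|\,\|OX + w\| - \|OX\|^2\bigr).
\end{equation*}
Setting $w = Ov$ for some $v \in \mathbb{R}^p$ and using $O^\top O = I_p$, I would pull the factor $O$ outside each norm via $\|O(X \pm v)\| = \|X \pm v\|$ and $\|OX\| = \|X\|$, yielding the key identity
\begin{equation*}
    f_{P_O}(Ov) \;=\; \mathrm{E}\bigl(\|X - v\|\,\|X + v\| - \|X\|^2\bigr) \;=\; f_P(v)
\end{equation*}
for every $v \in \mathbb{R}^p$.

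With this identity in hand, the conclusion is a short argument: since $O$ is a bijection of $\mathbb{R}^p$, the map $w \mapsto f_{P_O}(w)$ attains its infimum precisely at the images under $O$ of the minimizers of $v \mapsto f_P(v)$. In particular, if $v_0$ minimizes $f_P$, then for every $w \in \mathbb{R}^p$ one has $f_{P_O}(w) = f_P(O^\top w) \ge f_P(v_0) = f_{P_O}(Ov_0)$, so $Ov_0$ minimizes $f_{P_O}$. Existence of such a minimizer $v_0$ is already guaranteed by Theorem \ref{theo:existence}, so the statement is well-posed.

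I do not foresee any genuine obstacle here: the entire argument rests on the orthogonal invariance of the Euclidean norm, and no uniqueness or moment considerations enter (Theorem \ref{theo:moments} ensures finiteness on both sides). The only care needed is to be explicit that the expectations on the two sides are literally equal random-variable-by-random-variable after the change of variables $Y = OX$, rather than merely equal in distribution, to avoid any illusory subtlety.
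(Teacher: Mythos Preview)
Your argument is correct and is exactly the straightforward orthogonal-invariance computation the paper has in mind; in fact the paper omits the proof entirely as ``straightforward.'' There is nothing to add or change.
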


The proof of Theorem \ref{theo:equivariance} is straightforward and omitted.

\subsection{Results under ellipticity}

Fixing now $O =  (o_1, \ldots, o_p) \in \mathcal{O}^{p}$, $\Lambda \in \mathcal{D}_1^{p}$, we say that a $p$-variate distribution $P$ belongs to the family $\mathcal{E}(O, \Lambda)$ if the random vector $X \sim P$ admits the representation $X = O \Lambda Z$ for some random $p$-vector $Z$ such that
\begin{enumerate}
    \item $Z$ has a spherical distribution,
    \item $R := \| Z \|$ admits a density $f_R(r): (0, \infty) \to (0, \infty) $ w.r.t. the Lebesgue measure,
    \item $r \mapsto f_R(r)/r^{p - 1}$ is strictly decreasing, and
    \item $\mathrm{E}(R)$ exists as finite.
\end{enumerate}
By ``spherical distribution'' in Item 1, we mean that $Z$ and $VZ$ have the same distribution for every $V \in \mathcal{O}^{p}$, see \cite{fang1990symmetric}. In Item 4, the finite first moment is required later on for the existence of the gradient of $f_P$. Note that stating that $P \in \mathcal{E}(O, \Lambda)$ does not uniquely fix the distribution $P$ as the notation leaves the distribution of $\| Z \|$ unspecified. Our next result is stated in terms of the \textit{spatial sign covariance} matrix $S_P := \mathrm{E}(XX'/\| X \|^2)$ and its leading eigenvalue $\phi_1(S_P)$. Note that $\mathrm{tr}(S_P) = 1$, meaning that $\phi_1(S_P) \in [1/p, 1]$. 

\begin{theorem}\label{theo:main_1}
    Let $X \sim P \in \mathcal{E}(O, \Lambda)$.
    \begin{itemize}
        \item[(i)] If $\phi_1(S_P) > 1/2$, then there exists $\psi > 0$ such that the minimizers of $f_P$ are precisely the vectors $v = \pm \psi o_1$.
        \item[(ii)] If $\phi_1(S_P) \leq 1/2$, then $f_P$ is uniquely minimized at $v = 0$. 
    \end{itemize}
\end{theorem}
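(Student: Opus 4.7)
The plan is to reduce the analysis of $f_P$ to a ray-wise one, exploiting equivariance and the symmetries of the elliptical family. By Theorem~\ref{theo:equivariance} I may assume $O = I_p$, so that $X = \Lambda Z$ with $Z$ spherical and $\Lambda = \sigma\,\mathrm{diag}(\lambda,1,\dots,1)$; in this setting $S_P$ is diagonal with leading eigenvector $e_1$ and $\phi_1(S_P) = \mathrm{E}(X_1^2/\|X\|^2)$. Three symmetries of $f_P$ will be crucial: evenness $f_P(v) = f_P(-v)$, invariance under sign flips of any single coordinate of $v$, and invariance under orthogonal rotations acting only on the last $p-1$ coordinates of $v$; all follow from the corresponding symmetries of the distribution of $X$.

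The technical core is a ray-wise derivative computation. For $u \in \mathbb{S}^{p-1}$, setting $s = X'u$ and $w = \|X\|^2 - s^2$, the factorization $\sqrt{(\|X\|^2 + t^2)^2 - 4 t^2 s^2} = \sqrt{((s-t)^2 + w)((s+t)^2 + w)}$ yields
\begin{equation*}
\frac{d}{dt}\,f_P(tu) \;=\; 2t\,\mathrm{E}\,p(s,w,t), \qquad p(s,w,t) \;:=\; \frac{w + t^2 - s^2}{\sqrt{((s-t)^2 + w)\bigl((s+t)^2 + w\bigr)}}.
\end{equation*}
The key identity is $(\text{denom of }p)^2 - (\text{numer of }p)^2 = 4 s^2 w$, from which a short computation produces $\partial_t\, p(s,w,t) = 8 s^2 t w / [((s-t)^2+w)((s+t)^2+w)]^{3/2} \geq 0$, with strict inequality on the event $\{s \neq 0,\, w > 0\}$, which has positive probability for any non-degenerate $P$ in the family. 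Since $\mathrm{E}\,p(s,w,0) = 1 - 2\,u'S_P u$ and $\mathrm{E}\,p(s,w,t) \to 1$ as $t \to \infty$, the map $t \mapsto \mathrm{E}\,p(s,w,t)$ is strictly increasing on $(0,\infty)$. Hence along the ray $\{tu : t \geq 0\}$, $f_P$ is strictly monotone increasing on $(0,\infty)$ whenever $u'S_P u \leq 1/2$, and is first strictly decreasing and then strictly increasing with a unique minimum at some $\psi_u > 0$ whenever $u'S_P u > 1/2$.

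Case~(ii) is then immediate: $u'S_P u \leq \phi_1(S_P) \leq 1/2$ for every $u$, so $f_P$ is strictly increasing along every ray and $v = 0$ is the unique global minimizer. In case~(i), taking $u = e_1$ gives $u'S_P u = \phi_1(S_P) > 1/2$ and produces an axial minimizer at some $\psi > 0$ with $f_P(\psi e_1) < 0$, and evenness yields the pair $\pm \psi e_1$.

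The main obstacle, and what remains in case~(i), is to show that $\pm \psi e_1$ are the \emph{only} global minimizers. My plan is to prove the stronger statement that every non-zero critical point of $f_P$ lies on the $\pm e_1$ axis. Using the rotational symmetry on coordinates $2, \dots, p$, any hypothetical non-axial critical point $v_*$ may be rotated to lie in $\mathrm{span}(e_1, e_2)$; the gradient equations for indices $j \geq 3$ then vanish automatically by the $X_j \mapsto -X_j$ symmetry of the weight $1/(\|X - v_*\|\|X + v_*\|)$, leaving a coupled $2 \times 2$ system in $(v_{*,1}, v_{*,2})$. The trace identity $\phi_1(S_P) + (p-1)\phi_2(S_P) = 1$ forces $\phi_j(S_P) < 1/2$ for every $j \geq 2$ under case~(i), which, via the same ray-wise analysis, rules out critical points on the $\pm e_2$ axis. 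The remaining genuinely off-axial case $v_{*,1} v_{*,2} \neq 0$ must be excluded by direct analysis of the coupled system, exploiting the strict ordering $\lambda_1 > \lambda_2 = \cdots = \lambda_p$ together with the strict monotonicity of $r \mapsto f_R(r)/r^{p-1}$ from Item~3 of the definition of $\mathcal{E}(O,\Lambda)$; this off-axis elimination is the most delicate step of the proof.
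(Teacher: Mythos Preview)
Your ray-wise monotonicity argument is correct and matches the paper's treatment of the magnitude part: the paper parametrizes by $\sqrt{t}$ and proves strict convexity via Lemma~\ref{lem:convexity_of_h}, which is equivalent to your monotonicity of $t \mapsto \mathrm{E}\,p(s,w,t)$. Case~(ii) is complete, and the existence of the axial minimizer $\pm\psi e_1$ in case~(i) is fine.

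The gap is exactly where you locate it: the off-axis elimination. You correctly reduce to a $2\times 2$ gradient system in $\mathrm{span}(e_1,e_2)$, but then only assert that it ``must be excluded by direct analysis''; no such analysis is given, and it is not clear this route is tractable, since the two gradient components are implicit integrals whose joint behaviour at a generic off-axis point is not a priori controlled. The paper sidesteps critical-point analysis entirely and proves a stronger sphere-wise statement (Theorem~\ref{theo:direction}): for every radius $r > 0$, the restriction of $f_P$ to $\{\|v\| = r\}$ is strictly minimized at $\pm r o_1$. The key is to write the objective as $\mathrm{E}\,g_t(\|X\|^2,(u'X)^2)$ with $g_t$ strictly decreasing in its second argument, and then show (Lemmas~\ref{lem:aux_1}--\ref{lem:aux_3}) that $(o_1'X)^2$ conditionally stochastically dominates $(u'X)^2$ given $\|X\|^2$, for every $u \neq \pm o_1$. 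The density monotonicity hypothesis on $r \mapsto f_R(r)/r^{p-1}$ enters precisely in this dominance step, via a reflection argument on spherical caps (Lemma~\ref{lem:aux_2}), not through any gradient equations. This decoupling of direction and magnitude is the missing idea in your plan.
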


The proof of Theorem \ref{theo:main_1}, given in Appendix \ref{sec:proofs}, is based on treating the magnitude $\| v \|$ and the direction $v / \| v \|$ of the minimizer separately. For the magnitude, we show that the objective function admits a convex representation on every ray emanating from the origin, and that whether the ray contains a non-trivial minimizing value depends on the eigenvalue of the matrix $S_P$. For the direction, we establish a general conditional stochastic dominance result for objective functions depending on the data through $\| X \|^2$ and a projection $(X'u)^2$ onto a single direction, using which we prove that the choices $u = \pm o_1$ dominate all others under the model.  

Theorem \ref{theo:main_1} reveals that whether the minimizer of $f_P$ allows identifying the leading principal component direction depends on the magnitude of the largest eigenvalue of the spatial sign covariance matrix $S_P$. Under a general elliptical model, the eigenvalues of $S_P$ are known to admit closed-form expressions only for $p = 2$, see Proposition 1 in \cite{durre2015spatial}. However, under the more restrictive model $\mathcal{E}(O, \Lambda)$, where the final $p - 1$ elements of $\Lambda$ are equal, something more can be said. From Proposition 1 in \cite{durre2016eigenvalues} it follows that, under our assumptions, the eigenvalues of the spatial sign covariance matrix satisfy $\phi_1(S_P) > \phi_2(S_P) = \cdots = \phi_p(S_P)$. Moreover, since the trace of $S_P$ is unity, its spectrum is thus
\begin{align*}
    \tau, \frac{1 - \tau}{p - 1}, \ldots, \frac{1 - \tau}{p - 1},
\end{align*}
for some $\tau = \tau(\lambda, p) := \phi_1(S_P) \in (0, 1)$. From Proposition 3 in \cite{durre2016eigenvalues} we obtain the following integral representation for $\tau$.
\begin{lemma}\label{lem:integral}
    Let $X \sim P \in \mathcal{E}(O, \Lambda)$. Then,
    \begin{align}\label{eq:durre_integral}
    \tau(\lambda, p) = \frac{\lambda^2}{2} \int_0^\infty (1 + \lambda^2 x)^{-3/2} (1 + x)^{(1 - p)/2} dx.
\end{align}
\end{lemma}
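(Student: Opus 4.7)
The lemma is attributed in the surrounding text to Proposition 3 of \cite{durre2016eigenvalues}, so in principle the proof just verifies that the elliptical setup there specializes to ours. The computation is short enough, however, that I would give it directly rather than send the reader to an external paper.

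I would first normalize. Because $S_P$ depends on $X$ only through $X/\|X\|$, the positive scalar $\sigma$ in $\Lambda = \sigma\,\mathrm{diag}(\lambda,1,\ldots,1)$ plays no role, and by orthogonal equivariance the eigenvalues of $S_P$ coincide with those of $\mathrm{E}[\Lambda Z Z'\Lambda/\|\Lambda Z\|^2]$, so I may assume $\sigma = 1$ and $O = I_p$. Writing $Z = RU$ with $U$ uniform on $\mathbb{S}^{p-1}$ independent of $R$, the $R$ factor cancels inside the expectation, and combined with the structure of the eigenvalues of $S_P$ already established in the paragraph preceding the lemma this gives
\begin{align*}
    \tau = \mathrm{E}\!\left[\frac{\lambda^2 U_1^2}{\lambda^2 U_1^2 + U_2^2 + \cdots + U_p^2}\right].
\end{align*}

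Next I would evaluate this expectation via a Gaussian representation. Setting $U = Y/\|Y\|$ with $Y \sim N(0, I_p)$ turns the ratio into $A/(A+B)$ for $A = \lambda^2 Y_1^2$ and $B = Y_2^2 + \cdots + Y_p^2$, which are independent. Combining $1/(a+b) = \int_0^\infty e^{-t(a+b)}\,dt$ with Tonelli gives
\begin{align*}
    \tau = \int_0^\infty \mathrm{E}[A\, e^{-tA}]\,\mathrm{E}[e^{-tB}]\, dt.
\end{align*}
The second factor is the standard chi-square Laplace transform $(1+2t)^{-(p-1)/2}$, and differentiating $\mathrm{E}[e^{-tA}] = (1+2\lambda^2 t)^{-1/2}$ in $t$ yields $\mathrm{E}[A e^{-tA}] = \lambda^2(1+2\lambda^2 t)^{-3/2}$. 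Substituting $x = 2t$ then produces the claimed formula.

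No step poses a real obstacle: the main check is that Tonelli applies (the integrand is non-negative and the integral is bounded above by $\tau \in [0,1]$), after which the formula falls out by direct computation, and convergence at both endpoints is immediate from the explicit powers of $t$. The only real choice is stylistic---whether to simply cite \cite{durre2016eigenvalues} or, as above, to present the short self-contained derivation.
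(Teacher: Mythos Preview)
Your proof is correct and self-contained, but it proceeds along a genuinely different route from the paper. The paper's proof is essentially a citation: after normalizing to $\sigma=1$, $O=I_p$, it plugs the density of $X$ into Proposition~3 of \cite{durre2016eigenvalues}, obtaining an integral of the form
\[
\tau = \frac{\lambda^2}{2(\lambda^2+(p-1))}\int_0^\infty \big(1+\tfrac{\lambda^2}{\lambda^2+(p-1)}x\big)^{-3/2}\big(1+\tfrac{1}{\lambda^2+(p-1)}x\big)^{-(p-1)/2}\,dx,
\]
and then performs the linear change of variable $y=x/(\lambda^2+(p-1))$ to arrive at \eqref{eq:durre_integral}. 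Your derivation instead reduces to the spherical expectation $\tau=\mathrm{E}[\lambda^2U_1^2/\|\Lambda U\|^2]$, passes to a standard Gaussian via $U=Y/\|Y\|$, and evaluates the resulting ratio with the Laplace identity $1/(a+b)=\int_0^\infty e^{-t(a+b)}\,dt$ and the $\chi^2$ transforms. This buys you independence from the external reference and makes the computation fully transparent; the paper's version is shorter on the page but pushes the real work into the cited proposition.
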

In Appendix \ref{sec:integration} we compute the integral in dimensions $p\in\{2,3,4\}$, showing that it leads to increasingly complex expressions. Consequently, the analytical solving of the inequality $\tau(\lambda , p) > 1/2$ is not feasible. However, some qualitative, asymptotic claims can still be made. Denote next by $\tau_0(\lambda, p) := \{ 1 - \tau(\lambda, p) \}/(p - 1)$ the magnitude of the tail eigenvalue of $S_P$ and by  $\lambda_p^*$ the unique root of $\lambda \mapsto \tau(\lambda, p) - 1/2$ for fixed $p$. 

\begin{theorem}
\label{lemma:tau}
Let $X \sim P \in \mathcal{E}(O, \Lambda)$. Then,
\begin{enumerate}
    \item[(1)] For fixed $\lambda > 1$, the function $p \mapsto \tau(\lambda, p)$ is strictly decreasing and satisfies $\tau(\lambda, p) = \lambda^2/p + o(1/p)$ as $p \rightarrow \infty$.
    \item[(2)] For fixed $\lambda > 1$ the function $p \mapsto \tau_0(\lambda, p)$ is strictly decreasing and satisfies $\tau_0(\lambda, p) = 1/p + o(1/p)$ as $p \rightarrow \infty$. 
    \item[(3)] For fixed $p$, the function $\lambda \mapsto \tau(\lambda, p)$ is strictly increasing and satisfies $\tau(\lambda, p) = 1 + o(1) $ as $\lambda \rightarrow \infty$.
    \item[(4)] The function $p \mapsto \lambda_p^*$ is strictly increasing and, for all $\varepsilon > 0$, satisfies $p^{\frac{1}{2} - \varepsilon} < \lambda_p^* < p^{\frac{1}{2}+\varepsilon}$ for all $p$ large enough. 
    \item[(5)] There exists a unique $C > 0$ such that $\tau(Cp^\frac12, p) \rightarrow 1/2$ as $p \rightarrow \infty$, whose approximate value is $C \approx 1.634$.
\end{enumerate}
\end{theorem}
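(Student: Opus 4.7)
All five claims concern the integral representation~\eqref{eq:durre_integral}. My plan is to handle (1)--(3) by direct monotonicity and dominated-convergence arguments on the integrand, prove (5) via a scaling limit that produces an explicit limit function $g$, and then deduce (4) from (5) together with the monotonicities from (1) and (3). For (1), strict monotonicity of $p\mapsto\tau(\lambda,p)$ is immediate since $(1+x)^{(1-p)/2}$ is pointwise strictly decreasing in $p$ for every $x>0$; the asymptotic $\tau(\lambda,p)=\lambda^2/p+o(1/p)$ follows by substituting $x=y/(p-1)$ and applying DCT with the pointwise limits $(1+y/(p-1))^{(1-p)/2}\to e^{-y/2}$ and $(1+\lambda^2 y/(p-1))^{-3/2}\to 1$. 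For (2), I write
\begin{align*}
\tau_0(\lambda,p)=\frac{\lambda^2}{2}\int_0^\infty (1+\lambda^2 x)^{-3/2}\,h_p(x)\,dx,\qquad h_p(x):=\frac{1-(1+x)^{(1-p)/2}}{p-1},
\end{align*}
and, setting $t:=\tfrac{p-1}{2}\log(1+x)$, the strict decrease of $h_p$ in $p$ at each $x>0$ reduces to the elementary inequality $t<e^{t}-1$ (true for $t>0$). The asymptotic $\tau_0=1/p+o(1/p)$ then follows from (1) via the identity $\tau_0=(1-\tau)/(p-1)$.

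\textbf{Part (3).}
Using that $U_1^2\sim\mathrm{Beta}(1/2,(p-1)/2)$ when $Z$ is spherical, I rewrite $\tau=(s+1)\,\mathrm{E}[U_1^2/(1+sU_1^2)]$ with $s:=\lambda^2-1$. Differentiating the integrand in $s$ yields $U_1^2(1-U_1^2)/(1+sU_1^2)^2\ge 0$, strictly positive on the positive-probability event $\{U_1^2\in(0,1)\}$, which gives strict monotonicity of $\tau$ in $\lambda$. As $\lambda\to\infty$, the integrand is uniformly bounded by $1$ on $[0,1]$ and converges a.s.\ to $\mathbf{1}\{U_1^2>0\}$, so DCT yields $\tau\to 1$.

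\textbf{Part (5).}
Substituting $\lambda=C\sqrt{p}$ and $x=y/(p-1)$ in~\eqref{eq:durre_integral}, a dominator uniform in $p$ comes from $(1+C^2 py/(p-1))^{-3/2}\le (1+C^2 y)^{-3/2}$ together with the Bernoulli bound $(1+y/(p-1))^{(1-p)/2}\le 2/(2+y)$ (valid for $p\ge 3$); DCT then gives
\begin{align*}
g(C):=\lim_{p\to\infty}\tau(C\sqrt{p},p) &= \frac{C^2}{2}\int_0^\infty (1+C^2 y)^{-3/2}e^{-y/2}\,dy \\
&= \frac{1}{2}\int_0^\infty (1+z)^{-3/2}e^{-z/(2C^2)}\,dz,
\end{align*}
the second form obtained via $z=C^2 y$. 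In this latter form $g$ is plainly strictly increasing in $C$, with $g(0^+)=0$ and $g(\infty)=\tfrac{1}{2}\int_0^\infty(1+z)^{-3/2}\,dz=1$, so the IVT yields a unique $C>0$ with $g(C)=1/2$; the resulting transcendental equation, expressible via $\mathrm{erfc}$, solves numerically to $C\approx 1.634$.

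\textbf{Part (4) and main obstacle.}
From (1) and (3) one has $\tau(\lambda_{p+1}^*,p)>\tau(\lambda_{p+1}^*,p+1)=\tfrac12=\tau(\lambda_p^*,p)$, so $\lambda_{p+1}^*>\lambda_p^*$. For the polynomial bounds, for any $C'<C<C''$, Part (5) gives $\lim_p \tau(C'\sqrt{p},p)=g(C')<1/2<g(C'')=\lim_p \tau(C''\sqrt{p},p)$, which for $p$ large forces $C'\sqrt{p}<\lambda_p^*<C''\sqrt{p}$; in particular $\lambda_p^*\sim C\sqrt{p}$, from which $p^{1/2-\varepsilon}<\lambda_p^*<p^{1/2+\varepsilon}$ for $p$ large is immediate. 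The principal technical hurdle throughout is the DCT argument in Part (5), where the dominator must remain integrable as $\lambda=C\sqrt{p}$ varies with $p$; once the Bernoulli-type bound above is in place, the remaining computations are routine.
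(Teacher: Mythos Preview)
Your proposal is correct. Parts (1) and (5) follow essentially the paper's route (rescale the integration variable and apply DCT), though you are terser about the dominator in (1); the paper makes this explicit by showing that $p\mapsto (1+u/p)^{-p/2}$ is decreasing and then bounding by $(1+u/4)^{-2}$ for $p\ge 4$, which is integrable --- your Bernoulli-type bound $2/(2+y)$ alone is not integrable and only suffices in (5) because there it is multiplied by $(1+C^2y)^{-3/2}$.

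In Parts (2), (3) and (4) you take genuinely different and somewhat cleaner routes than the paper. For (2) the paper reduces the monotonicity of $\tau_0$ to the inequality $p\,\tau(\lambda,p)-(p-1)\,\tau(\lambda,p+1)<1$, performs the change of variable $x=u/\lambda^2$, and then argues via a further monotonicity-in-$\lambda$ step; your computation of $h_p(x)$ and the one-line reduction to $t<e^t-1$ is more direct. For (3) the paper again substitutes $x=u/\lambda^2$ in the integral, while your probabilistic form $\tau=\mathrm{E}[\lambda^2U_1^2/(1+(\lambda^2-1)U_1^2)]$ makes both the strict monotonicity and the limit $\tau\to1$ immediate. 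For (4) the paper establishes the bounds $p^{1/2\pm\varepsilon}$ by a separate DCT computation of $\lim_p\tau(p^a,p)$ for $a\neq 1/2$; you instead read them off from (5), which in fact yields the sharper conclusion $\lambda_p^*\sim C\sqrt{p}$. Overall your argument is more economical; the paper's is more self-contained in that (4) does not rely on having first proved (5).
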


In particular, part (4) of Theorem \ref{lemma:tau} reveals that satisfying the condition $\phi_1(S_P) = \tau(\lambda, p) > 1/2$ gets increasingly more difficult as $p$ grows. Moreover, parts (4) and (5) together state that, for large $p$, $f_P$ is able to identify the first PC if the standard deviation parameter $\lambda$ related to the first principal component is approximately at least $1.634 p^{1/2}$.




Another question of interest is whether the norm $\psi$ of the minimizer in Theorem \ref{theo:main_1} can be further quantified. As we remarked in Section \ref{sec:introduction}, when $p = 1$, the absolute value of the minimizer is the median absolute deviation of $X$. Hence, $\psi$ can be seen as an analogue of MAD for multivariate data. It is straightforwardly seen from the definition of $f_P$ that, if the random vector $X$ is scaled as $X \mapsto bX$ for some $b \in \mathbb{R}$, then the norm of the minimizer transforms as $\psi \mapsto |b| \psi$, making it a \textit{measure of scale}. However, obtaining a closed-form solution for $\psi$ appears to be infeasible, as evidenced by the integral representation we derive in Appendix~\ref{sec:norm_minimizer}. Instead, we next present a result on the asymptotic behavior of the norm $\psi = \psi(\lambda,p)$ of the minimizer when either $p$ or $\lambda$ is let to diverge to infinity.

\begin{theorem}
    \label{thm:normofminimizer}
    Let $X \sim P \in \mathcal{E}(O, \Lambda)$. Then,
    \begin{enumerate}
        \item[(1)] For every fixed $\lambda > 1$, it holds that $\psi(\lambda,p) = 0$ for all $p$ large enough.
        \item[(2)] For fixed $p$, we have for all $\varepsilon > 0$ that $\lambda^{1 - \varepsilon} < \psi(\lambda,p) < \lambda^{1 + \varepsilon}$ for all $\lambda$ large enough. 
    \end{enumerate}
\end{theorem}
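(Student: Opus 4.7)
Part~(1) is an immediate consequence of earlier results: Theorem~\ref{lemma:tau}~(1) says $\tau(\lambda,p)\to 0$ as $p\to\infty$ for fixed $\lambda$, so $\tau(\lambda,p)\le 1/2$ for all $p$ large enough, and Theorem~\ref{theo:main_1}~(ii) then forces $\psi(\lambda,p)=0$.

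My plan for part~(2) is to prove the stronger statement that $\psi(\lambda,p)/\lambda$ converges to a strictly positive constant as $\lambda\to\infty$. By Theorem~\ref{lemma:tau}~(3) combined with Theorem~\ref{theo:main_1}~(i), for $\lambda$ large enough the minimizers of $f_P$ lie on the axis spanned by $o_1$, so it suffices to analyze the univariate map $\psi\mapsto f_P(\psi o_1)$. Writing $X=O\Lambda Z$ and using the identity $\|X-v\|\|X+v\|=\sqrt{(\|X\|^2+\|v\|^2)^2-4(X'v)^2}$, one gets
\begin{align*}
    f_P(\psi o_1)=\mathrm{E}\!\left[\sqrt{(T+\psi^2)^2-4\sigma^2\lambda^2\psi^2 Z_1^2}-T\right], \qquad T=\sigma^2[\lambda^2 Z_1^2+R^2-Z_1^2].
\end{align*}
I would then reparametrize $\psi=\lambda\tilde\psi$ and pull out $\lambda^4$ from inside the square root to define $F_\lambda(\tilde\psi):=\lambda^{-2}f_P(\lambda\tilde\psi\,o_1)$. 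The Cauchy--Schwarz bounds $|T-\psi^2|\le\|X-v\|\|X+v\|\le T+\psi^2$ show that the integrand of $F_\lambda$ is bounded in absolute value by $\tilde\psi^2$, so dominated convergence yields
\begin{align*}
    F_\lambda(\tilde\psi)\longrightarrow g(\tilde\psi):=\mathrm{E}\!\left[|\sigma^2 Z_1^2-\tilde\psi^2|-\sigma^2 Z_1^2\right].
\end{align*}

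The key structural fact is that a direct differentiation shows the integrand to be convex as a function of $s=\|v\|^2$. Hence $s\mapsto f_P(\psi o_1)$, $s\mapsto F_\lambda(\sqrt s)$ and $s\mapsto g(\sqrt s)$ are all convex, and pointwise convergence upgrades to uniform convergence on compact subsets of $(0,\infty)$ by Rockafellar's theorem on convex functions. Differentiating $g$ in $s$ gives $(d/ds)g(\sqrt s)=2F_{\sigma^2 Z_1^2}(s)-1$, so $g$ is uniquely minimized at $\tilde\psi^*=\sigma\sqrt{\mathrm{Med}(Z_1^2)}>0$, with $g(\tilde\psi^*)<0=g(0)$. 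The integration-by-parts representation $g(\tilde\psi)=\tilde\psi^2-2\int_0^{\tilde\psi^2}P(\sigma^2 Z_1^2>y)\,dy$ further shows $g(\tilde\psi)\to\infty$ as $\tilde\psi\to\infty$ with no additional moment assumption.

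The remaining step, which I expect to be the main obstacle, is turning all this into argmin convergence, i.e.\ $\tilde\psi_\lambda:=\psi(\lambda,p)/\lambda\to\tilde\psi^*$. For the lower bound, the global inequality $F_\lambda(\tilde\psi)\ge -\tilde\psi^2$ combined with $F_\lambda(\tilde\psi_\lambda)\le F_\lambda(\tilde\psi^*)\to g(\tilde\psi^*)<0$ gives $\tilde\psi_\lambda\ge\sqrt{-g(\tilde\psi^*)/2}$ for all large $\lambda$. For the upper bound I would fix $M>\tilde\psi^*$ with $g(M)>g(\tilde\psi^*)+1$; uniform convergence on $[0,M]$ then gives $F_\lambda(M)>F_\lambda(\tilde\psi^*)$ for large $\lambda$, and convexity of $F_\lambda$ in $\tilde\psi^2$ rules out $\tilde\psi_\lambda>M$. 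Standard argmin-continuity for convex functions then forces $\tilde\psi_\lambda\to\tilde\psi^*$, so $\psi(\lambda,p)\sim\tilde\psi^*\lambda$, giving $\lambda^{1-\varepsilon}<\psi(\lambda,p)<\lambda^{1+\varepsilon}$ for all $\lambda$ large enough.
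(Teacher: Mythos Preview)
Your proof of part~(1) is identical to the paper's. For part~(2) your argument is correct but follows a genuinely different route from the paper, and in fact yields a sharper conclusion.

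The paper works directly with the derivative $h_P'(t)$ of the one-dimensional objective $t\mapsto f_P(\sqrt t\,o_1)$: it evaluates the integrand of $h_P'$ at the two test points $t=\lambda^{2\pm\varepsilon}$, shows by dominated convergence that the limits are $\pm 1$, and then uses strict convexity of $h_P$ (Lemma~\ref{lem:convexity_of_h}) to trap the minimizer between $\lambda^{2-\varepsilon}$ and $\lambda^{2+\varepsilon}$. This is slightly more elementary, needing only DCT on a single derivative and a sign check at two points. Your approach instead rescales the whole objective to $F_\lambda(\tilde\psi)=\lambda^{-2}f_P(\lambda\tilde\psi\,o_1)$, identifies the pointwise limit $g(\tilde\psi)=\mathrm{E}\{|\sigma^2Z_1^2-\tilde\psi^2|-\sigma^2Z_1^2\}$, and then uses convexity (again Lemma~\ref{lem:convexity_of_h}) together with Rockafellar-type uniform convergence and an argmin-continuity argument. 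The payoff is a strictly stronger statement: you obtain $\psi(\lambda,p)/\lambda\to\sigma\sqrt{\mathrm{Med}(Z_1^2)}$, pinning down the exact first-order constant, whereas the paper only bounds the growth rate between $\lambda^{1-\varepsilon}$ and $\lambda^{1+\varepsilon}$. The paper does compute the borderline limit $h_P'(\lambda^2)\to\mathrm{E}\{\mathrm{sgn}(1-\sigma^2Z_1^2)\}$ and notes that its sign depends on the distribution of $Z$, which is exactly consistent with your identification of the limiting constant, but the paper does not take this final step. Your tightness argument (lower bound via $F_\lambda(\tilde\psi)\ge-\tilde\psi^2$, upper bound via convexity and $g(M)>g(\tilde\psi^*)+1$) is the only part that needs a little care, but it is sound; uniqueness of $\tilde\psi^*$ follows since $Z_1$ has an everywhere positive density under the model assumptions, making the CDF of $Z_1^2$ strictly increasing.
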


Part (1) of Theorem \ref{thm:normofminimizer} essentially states that, when the dimension $p$ of the data grows, any fixed signal strength $\lambda$ gets drowned by the noise contained in the final $p - 1$ dimensions, making the first PC non-identifiable. Part (2) states that, for fixed tail eigenvalues, the norm grows proportional to the standard deviation parameter $\lambda$ of the first principal component. This result can thus be seen as an asymptotic version of the fact that $\psi$ is a measure of scale.

In the next section, we study the norm of the minimizer, along with the minimizing direction, from the viewpoint of large-sample asymptotics.

\section{Large-sample behavior under ellipticity}\label{sec:sample}

As our next task, we study the minimizers of the sample version of the objective function. Namely, let $P_n$ be the empirical distribution of a random sample $X_1, \ldots, X_n$ drawn from $P \sim \mathcal{E}(O, \Lambda)$. Then, we are interested in the minimizers of $f_{P_n}$, defined as,
\begin{align}\label{eq:sample_objective_function}
    f_{P_n}(v) = \frac{1}{n} \sum_{i = 1}^n \left( \| X_i - v \| \| X_i + v \| - \| X_i \|^2 \right).
\end{align}

The asymptotic theory of argmax estimators is greatly simpler if either (i) the objective function is convex, or (ii) the parameter space is a compact set. Our current scenario satisfies neither of these conditions, and we solve this problem by carrying out the optimization of \eqref{eq:sample_objective_function} in a very specific sequence of compact sets. For $R > 0$, we denote $\mathcal{B}(R) := \{ x \in \mathbb{R}^p \mid \| x \| \leq R \}$ and let $v_n$ denote any minimizer of \eqref{eq:sample_objective_function} over $\mathcal{B}(H_n + 1/2)$, where the data-dependent quantity $H_n$ is defined in Appendix \ref{sec:radius}. The significance behind this choice is that all minimizers of $f_{P_n}$ can be shown to reside in $\mathcal{B}(H_n + 1/2)$, letting us restrict our attention to this sequence of domains. Recall in the sequel that $\psi o_1$ denotes a minimizer of the population-level objective function $f_P$, in the notation of Theorem \ref{theo:main_1}.

\begin{theorem}\label{thm:thm7}
    Let $X \sim P \in \mathcal{E}(O, \Lambda)$ and assume that $\phi_1(S_p) > 1/2$. Then there exists a sequence of signs $s_n \in \{-1, 1\}$ such that $s_n v_n \rightarrow_P \psi o_1$, as $n \rightarrow \infty$, where $v_n$ is as above.
\end{theorem}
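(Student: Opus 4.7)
The plan is to adapt the standard M-estimator consistency argument (e.g., van der Vaart Theorem 5.7) to our setting, which has two complications: the parameter domain $\mathcal{B}(H_n + 1/2)$ is random and expanding in $n$, and by Theorem \ref{theo:main_1}(i) the population minimizer is only identified up to a sign, so the claim can only hold for $s_n v_n$ with a data-dependent $s_n \in \{-1, +1\}$.

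First, I would reduce to optimization over a fixed compact set. Using the construction of $H_n$ in Appendix \ref{sec:radius}, one should verify that $H_n = O_P(1)$. Fixing $\varepsilon > 0$, this yields an $R > \psi$ such that the event $E_n := \{ \mathcal{B}(H_n + 1/2) \subseteq K \}$, with $K := \mathcal{B}(R)$, has probability at least $1 - \varepsilon$ for all large $n$. On $E_n$, any minimizer $v_n$ lies in $K$, and $\pm \psi o_1 \in K$ is feasible, so that $f_{P_n}(v_n) \leq f_{P_n}(\psi o_1)$; the remainder of the argument can then be carried out on the fixed compact set $K$.

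Second, I would establish uniform convergence $\sup_{v \in K} | f_{P_n}(v) - f_P(v) | \rightarrow_P 0$. For each fixed $v$, the strong law applies because the ellipticity assumption $\mathrm{E}(R) < \infty$ implies $\mathrm{E}\|X\| < \infty$, and the inequality $| \|X-v\| \|X+v\| - \|X\|^2 | \leq \|v\|^2 + 2 \|v\| \|X\|$ provides integrability. For uniformity, on $K$ the map $v \mapsto \|x-v\| \|x+v\| - \|x\|^2$ is Lipschitz in $v$ with constant bounded by $C_R(1 + \|x\|)$, supplying an integrable envelope for the increments, so a standard covering argument shows the class is $P$-Glivenko--Cantelli on $K$. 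Well-separation of the minimizers then follows from Theorem \ref{theo:main_1}(i), continuity of $f_P$, and compactness: for every $\delta > 0$ there is $\eta > 0$ with
\begin{align*}
    \inf_{v \in K,\, \min_\pm \| v \mp \psi o_1 \| \geq \delta} f_P(v) \geq f_P(\psi o_1) + \eta.
\end{align*}
Combined with $f_{P_n}(v_n) \leq f_{P_n}(\psi o_1)$, the standard argmax contradiction yields $\min_\pm \| v_n \mp \psi o_1 \| \rightarrow_P 0$; defining $s_n$ to select the closer of the two signs then gives $s_n v_n \rightarrow_P \psi o_1$.

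The main obstacle is the first step: verifying $H_n = O_P(1)$ relies on the specific construction in Appendix \ref{sec:radius}, and without this reduction the domain does not collapse to a fixed compact set, which is precisely what the Glivenko--Cantelli and argmax arguments require. A secondary technical point is the uniform convergence itself, since the integrands are unbounded in $x$ and the Lipschitz constant depends on $R$, so some care with bracketing or envelopes is needed; however, this should be routine given the finite first moment of $\|X\|$.
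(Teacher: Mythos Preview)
Your proposal is correct and follows essentially the same strategy as the paper: reduce the random domain $\mathcal{B}(H_n+1/2)$ to a fixed compact set, establish a uniform law of large numbers there, and apply the argmax theorem to conclude. The paper differs only in technical packaging: it handles the sign ambiguity by intersecting the compact ball with the half-space $\{v\mid v'o_1\geq 0\}$ from the outset (rather than taking $\min_\pm$ at the end), it uses the sharper envelope $|g(v,X)|\leq\|v\|^2$ from the nuclear-norm representation (so the ULLN needs no moment condition at all), and for your flagged obstacle it proves the stronger statement $H_n\to_p H$ by showing that the coefficients of the sample quadratic defining $H_n$ converge to their population counterparts, via a small lemma on $\tfrac{1}{n}\sum_i\mathbb{I}(Z_i\leq r_n)\to_p P(Z\leq r)$ when $r_n\to_p r$.
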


Having established consistency, we next move to asymptotic normality. A key component of the proof is Appendix \ref{sec:taylor}, where we use the dominated convergence theorem to derive a second-order Taylor expansion of the population objective function. In the following result $e_1$ denotes the first standard basis vector.


\begin{theorem}\label{thm:thm8}
    Let $X \sim P \in \mathcal{E}(O, \Lambda)$ and assume that the density $f_R(r)$ of $R := \| Z \|$ is bounded, $\phi_1(S_p) > 1/2$ and $\mathrm{E}( Z_1^2 )<\infty$. Then there exists a sequence of signs $s_n \in \{-1, 1\}$ such that
    \begin{align*}
        \sqrt{n} (s_n v_n - \psi o_1) \rightsquigarrow \mathcal{N}_p(0, \Sigma),
    \end{align*}
     as $n \rightarrow \infty$, where $\Sigma=O \{ q_1 e_1 e_1' + q_2 (I_p - e_1 e_1') \} O'$ and $q_1, q_2$, are given on the last line of the proof of this result.
\end{theorem}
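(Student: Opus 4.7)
The plan is to combine the consistency from Theorem \ref{thm:thm7}, the second-order Taylor expansion of Appendix \ref{sec:taylor}, and a standard Bahadur-type linearization for smooth M-estimators. By the equivariance in Theorem \ref{theo:equivariance} applied to $O'$, it suffices to handle the case $O = I_p$, in which the target becomes $\psi e_1$; the final covariance is then conjugated by $O$ in the last step. Passing to $s_n v_n$ with $s_n$ from Theorem \ref{thm:thm7}, I may assume $v_n \rightarrow_P \psi e_1$. Since $\phi_1(S_P) > 1/2$ forces $\psi > 0$ via Theorem \ref{theo:main_1}, the limit is an interior point of the constraint set $\mathcal{B}(H_n + 1/2)$ with probability tending to one, and because $X$ admits a Lebesgue density, the summands $g(v; X_i) := \| X_i - v \| \| X_i + v \| - \| X_i \|^2$ are almost surely $C^\infty$ on a fixed open neighborhood $U$ of $\psi e_1$. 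Hence the first-order condition $\nabla f_{P_n}(v_n) = 0$ holds eventually.

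A one-term Taylor expansion then yields, for some $\tilde v_n$ on the segment between $v_n$ and $\psi e_1$,
\begin{align*}
0 = \nabla f_{P_n}(\psi e_1) + \nabla^2 f_{P_n}(\tilde v_n)(v_n - \psi e_1).
\end{align*}
The population gradient vanishes at the target, $\nabla f_P(\psi e_1) = 0$, by a dominated-convergence argument analogous to Appendix \ref{sec:taylor} (using $\mathrm{E}(R) < \infty$) together with Theorem \ref{theo:main_1}. A direct calculation yields the crude bound $\| \nabla_v g(\psi e_1; x) \| \leq 2(\| x \| + \psi)$, so the multivariate CLT, justified by $\mathrm{E}(Z_1^2) < \infty$, gives
\begin{align*}
\sqrt{n}\, \nabla f_{P_n}(\psi e_1) \rightsquigarrow \mathcal{N}_p(0, V), \qquad V := \mathrm{Cov}\bigl( \nabla_v g(\psi e_1; X) \bigr).
\end{align*}
By the rotational invariance of the law of $X$ inside $\{e_1\}^\perp$ (which follows from the equality of the last $p-1$ diagonal entries of $\Lambda$ and the sphericity of $Z$) together with the coordinate reflections $x_j \mapsto -x_j$ for $j \geq 2$, both $V$ and the population Hessian $H := \nabla^2 f_P(\psi e_1)$ extracted from Appendix \ref{sec:taylor} take the block form $a e_1 e_1' + b (I_p - e_1 e_1')$ with positive $a, b$; positivity of $H$ records the fact that $\psi e_1$ is a strict local minimizer in the elliptical model.

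A uniform law of large numbers on $U$ applied to the almost surely smooth class $\{ \nabla^2 g(v; \cdot) : v \in U \}$ gives $\nabla^2 f_{P_n}(\tilde v_n) \rightarrow_P H$. Inverting the Taylor identity produces the Bahadur representation
\begin{align*}
\sqrt{n}(v_n - \psi e_1) = -H^{-1}\, \sqrt{n}\, \nabla f_{P_n}(\psi e_1) + o_P(1) \rightsquigarrow \mathcal{N}_p(0, H^{-1} V H^{-1}),
\end{align*}
and the sandwich $H^{-1} V H^{-1}$ inherits the block structure $q_1 e_1 e_1' + q_2 (I_p - e_1 e_1')$. Undoing the initial reduction by conjugating with $O$ produces the stated $\Sigma = O\{ q_1 e_1 e_1' + q_2 (I_p - e_1 e_1') \} O'$.

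The main obstacle is controlling the empirical Hessian class uniformly on $U$: the second derivatives of $g(\cdot; x)$ carry $1/\|x \pm v\|$ singularities on the thin set $\{v = \pm x\}$, and a naive envelope is not square-integrable near these points. This is where the hypothesis that $f_R$ is bounded becomes essential, since together with the change of variables $X = O \Lambda Z$ it bounds the density of $X$ near $\pm \psi e_1$ by a constant, which in dimension $p \geq 2$ renders the $1/\|x \pm v\|$ singularity integrable and furnishes the integrable envelope needed for both the uniform law of large numbers on the Hessian and the tightness of the score empirical process.
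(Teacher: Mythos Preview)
Your linearization via a Taylor expansion of the \emph{empirical} score, followed by a uniform LLN on the empirical Hessian, has a genuine gap at exactly the point you flag as the ``main obstacle''. The issue is not pointwise integrability of $1/\|x\pm v\|$ (which the bounded-density assumption does handle, as in Lemma~\ref{lemma:uniformintegrability}) but the \emph{envelope} needed for the ULLN. For a fixed open neighborhood $U$ of $\psi e_1$ you need $x\mapsto \sup_{v\in U}\|\nabla^2 g(v;x)\|$ to be integrable, and this supremum contains $\sup_{v\in U}1/\|x-v\|$, which equals $+\infty$ whenever $x\in U$. Since $X$ has a density that is bounded and positive near $\psi e_1$, the event $\{X\in U\}$ has strictly positive probability, so the envelope is not integrable and the standard ULLN does not apply. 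The same difficulty undercuts your earlier claim that the summands are ``almost surely $C^\infty$ on a fixed open neighborhood $U$'': for any fixed $U$, with probability tending to one some $X_i$ lands in $\pm U$, so $f_{P_n}$ fails to be differentiable there, and neither the first-order condition $\nabla f_{P_n}(v_n)=0$ nor the mean-value expansion $0=\nabla f_{P_n}(\psi e_1)+\nabla^2 f_{P_n}(\tilde v_n)(v_n-\psi e_1)$ is justified on $U$ without further argument.

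The paper sidesteps this entirely by invoking Theorem~5.23 of \cite{van2000asymptotic}, which requires only (i) a local Lipschitz condition on the \emph{objective} $g(v;x)$, verified in one line from the nuclear-norm representation via $|\,\|XX'-v_1v_1'\|_*-\|XX'-v_2v_2'\|_*\,|\le\|v_1+v_2\|\,\|v_1-v_2\|$, and (ii) a second-order Taylor expansion of the \emph{population} objective $f_P$ at $\psi o_1$, supplied by Appendix~\ref{sec:taylor} using the bounded-density assumption only through the uniform integrability of Lemma~\ref{lemma:uniformintegrability}. No differentiability of $f_{P_n}$, no empirical Hessian, and no Hessian envelope are needed; the linearization $\sqrt{n}(v_n-\psi o_1)=-M^{-1}\sqrt{n}\,\nabla f_{P_n}(\psi o_1)+o_p(1)$ then drops out directly. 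Your symmetry argument for the block form of $M$ and $V$ and the subsequent CLT step are fine and match the paper's, but the route to the linearization needs to go through the Lipschitz/population-Hessian machinery rather than through a sample Hessian ULLN.
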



As the direction $v_n/\| v_n \|$ and magnitude $\| v_n \|$ of the minimizer both describe different aspects of the distribution $P$, we next provide an auxiliary result that allows separating the limiting distribution in Theorem \ref{thm:thm8} to two parts, giving the joint distribution of the two components in terms of the limiting distribution of~$v_n$.

\begin{lemma}\label{lem:decomposition}
    Let $h_n$ be a sequence of random $p$-vectors that satisfies $\sqrt{n} (h_n - h) \rightsquigarrow \mathcal{N}_p(0, \Sigma)$, as $n \rightarrow \infty$, for some non-zero $h \in \mathbb{R}^p$ and positive semi-definite $\Sigma \in \mathbb{R}^{p \times p}$. Then,
    \begin{align*}
        \sqrt{n} \begin{pmatrix}
            \| h_n \| - \| h \| \\
            \frac{h_n}{\| h_n \|} - \frac{h}{\| h \|}
        \end{pmatrix} 
        \rightsquigarrow \mathcal{N}_{p + 1} \left( 0, \frac{1}{\| h \|^2} \begin{pmatrix}
            h' \Sigma h & h' \Sigma Q \\
            Q \Sigma h & Q \Sigma Q
        \end{pmatrix} \right),
    \end{align*}
    where $Q := I_p - hh'/\| h \|^2$. 
\end{lemma}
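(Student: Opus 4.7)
The plan is to derive the claim as a direct application of the delta method to the map
\[
g: \mathbb{R}^p \setminus \{0\} \to \mathbb{R} \times \mathbb{R}^p, \qquad g(x) = \bigl( \| x \|, \, x/\| x \| \bigr).
\]
Since $h \neq 0$, the map $g$ is continuously differentiable in an open neighborhood of $h$, and the convergence $\sqrt{n}(h_n - h) \rightsquigarrow \mathcal{N}_p(0,\Sigma)$ implies $h_n \to h$ in probability, so $h_n$ lies in this neighborhood with probability tending to one; the value of $g(h_n)$ on the null event $\{h_n = 0\}$ may therefore be defined arbitrarily without affecting the weak limit.

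The first step is to compute the Jacobian $J \in \mathbb{R}^{(p+1)\times p}$ of $g$ at $h$. For the norm component, $\nabla \| x \| = x/\| x \|$, which evaluated at $h$ gives the row $h'/\| h \|$. For the direction component, differentiating $x \mapsto x/\| x \|$ via the quotient rule yields
\[
\frac{\partial}{\partial x} \frac{x}{\| x \|} = \frac{1}{\| x \|}\Bigl( I_p - \frac{x x'}{\| x \|^2} \Bigr),
\]
which at $x = h$ equals $Q/\| h \|$ with $Q = I_p - h h'/\| h \|^2$ as in the statement. Stacking these two blocks, the Jacobian is
\[
J = \frac{1}{\| h \|} \begin{pmatrix} h' \\ Q \end{pmatrix}.
\]

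Applying the multivariate delta method (see, e.g., \cite{van2000asymptotic}) then gives
\[
\sqrt{n}\bigl( g(h_n) - g(h) \bigr) \rightsquigarrow \mathcal{N}_{p+1}\bigl( 0, \, J \Sigma J' \bigr).
\]
The final step is to expand the product $J \Sigma J'$ block by block, which yields the top-left entry $h' \Sigma h / \| h \|^2$, the off-diagonal blocks $h' \Sigma Q/\| h \|^2$ and $Q \Sigma h/\| h \|^2$, and the bottom-right block $Q \Sigma Q/\| h \|^2$, exactly matching the covariance matrix in the statement. There is no substantive obstacle: the only points requiring a touch of care are the fact that $h \neq 0$ is needed for differentiability of the direction map (and this is precisely the hypothesis), and the routine bookkeeping of the $1 + p$ block structure of $J$.
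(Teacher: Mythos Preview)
Your proof is correct and is essentially the same argument as the paper's: the paper carries out the first-order linearization by hand via Slutsky's theorem (rationalizing $\|h_n\|-\|h\|$ through $\|h_n\|+\|h\|$ and expanding $h_n/\|h_n\|$ directly), arriving at exactly the representation $\sqrt{n}(g(h_n)-g(h)) = J\sqrt{n}(h_n-h)+o_p(1)$ with the same $J$ you compute. You simply package this as the multivariate delta method, which is a perfectly clean way to present the same computation.
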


As a corollary of Theorem~\ref{thm:thm8} and Lemma~\ref{lem:decomposition} we get the limiting distributions of the unit norm estimator $v_n/\|v_n\|$ of the leading principal direction $o_1$, as well as the estimator $\| v_n \|$ of the measure of scale $\psi$. Interestingly, these turn out to be asymptotically independent. The proof follows straightforwardly from the previous two results and is thus omitted.

\begin{corollary}\label{cor:asymp_split}
    Let $X \sim P \in \mathcal{E}(O, \Lambda)$ and assume that the density $f_R(r)$ of $R := \| Z \|$ is bounded, $\phi_1(S_p) > 1/2$ and $\mathrm{E}( Z_1^2 )<\infty$. Then there exists a sequence of signs $s_n \in \{-1, 1\}$ such that
    \begin{align*}
        \sqrt{n} \begin{pmatrix}
            \|v_n\|-\psi\\
            s_n\frac{v_n}{\|v_n\|} -  o_1      \end{pmatrix}\rightsquigarrow \mathcal{N}_{p + 1} \left( 0, \begin{pmatrix}
            q_1 & 0' \\
            0 & \frac{q_2}{\psi^2} O\left(I_p-e_1e_1'\right)O'
        \end{pmatrix} \right),
    \end{align*}
    as $n \rightarrow \infty$, where $q_1, q_2$ are as in Theorem \ref{thm:thm8}.
\end{corollary}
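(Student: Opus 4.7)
The plan is to apply Lemma \ref{lem:decomposition} to the sequence $h_n := s_n v_n$ with limit $h := \psi o_1$, using the conclusion of Theorem \ref{thm:thm8}. Since $s_n \in \{-1, 1\}$, one has $\| h_n \| = \| v_n \|$ and $h_n/\| h_n \| = s_n v_n / \| v_n \|$, while $\| h \| = \psi$ and $h/\| h \| = o_1$. The hypotheses of Lemma \ref{lem:decomposition} are met because Theorem \ref{thm:thm8} provides the required weak convergence with covariance $\Sigma = O \{ q_1 e_1 e_1' + q_2 (I_p - e_1 e_1') \} O'$.

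The remaining step is to simplify the covariance blocks given in Lemma \ref{lem:decomposition}. The key observation is that, because $o_1 = O e_1$, the vector $o_1$ is an eigenvector of $\Sigma$ with eigenvalue $q_1$: indeed, $\Sigma o_1 = O \{ q_1 e_1 e_1' + q_2 (I_p - e_1 e_1') \} e_1 = q_1 O e_1 = q_1 o_1$. Consequently, with $Q = I_p - o_1 o_1'$, one obtains $h' \Sigma h / \| h \|^2 = o_1' \Sigma o_1 = q_1$, while $h' \Sigma Q = \psi q_1 o_1' (I_p - o_1 o_1') = 0$ so the off-diagonal blocks of the limiting covariance vanish, yielding the claimed asymptotic independence between the norm and the direction.

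Finally, for the direction block, I would expand
\begin{align*}
    Q \Sigma Q = \Sigma - o_1 o_1' \Sigma - \Sigma o_1 o_1' + o_1 o_1' \Sigma o_1 o_1' = \Sigma - q_1 o_1 o_1',
\end{align*}
which equals $q_2 O (I_p - e_1 e_1') O'$, so that $Q \Sigma Q / \| h \|^2 = (q_2/\psi^2)\, O (I_p - e_1 e_1') O'$, matching the bottom-right block in the statement. This gives the joint limit in the corollary.

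There is no genuine obstacle: the derivation is purely algebraic once Theorem \ref{thm:thm8} and Lemma \ref{lem:decomposition} are in hand. The only subtlety worth flagging is the role of the signs $s_n$, but these propagate transparently since $\|\cdot\|$ is sign-invariant and the same sequence $s_n$ governs both the norm convergence (trivially) and the direction convergence. This is presumably why the authors describe the proof as following straightforwardly and omit it.
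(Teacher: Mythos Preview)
Your proposal is correct and follows exactly the route the paper indicates: combine Theorem \ref{thm:thm8} with Lemma \ref{lem:decomposition} applied to $h_n = s_n v_n$, $h = \psi o_1$, and then simplify the blocks using that $o_1$ is an eigenvector of $\Sigma$. The paper omits the proof as ``straightforward from the previous two results,'' and your algebraic verification of the three blocks (in particular the vanishing cross-term and the reduction $Q\Sigma Q = \Sigma - q_1 o_1 o_1' = q_2 O(I_p - e_1 e_1')O'$) is precisely what is needed.
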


Finally, we still provide, for comparison, the counterpart of Theorem \ref{thm:thm8} for the standard PCA estimator. Let $u_n$ denote the leading unit-length eigenvector of the sample covariance matrix computed from the empirical distribution $P_n$. The following result is well-known, but for completeness, we provide a proof. Note that the result requires finite moments of order 4, whereas finite moments of order 2 are sufficient in Theorem \ref{thm:thm8}.

\begin{theorem}\label{theo:pca}
    Let $X \sim P \in \mathcal{E}(O, \Lambda)$ and assume that $\mathrm{E} (Z_1^4) < \infty$. Then there exists a sequence of signs $s_n \in \{-1, 1\}$ such that
    \begin{align*}
        \sqrt{n} (s_n u_n - o_1) \rightsquigarrow \mathcal{N}_p \left( 0, \frac{\lambda^2}{(\lambda^2 - 1)^2} \frac{\mathrm{E}(Z_1^2 Z_2^2)}{\{ \mathrm{E}(Z_1^2) \}^2} O (I_p - e_1 e_1') O' \right),
    \end{align*}
     as $n \rightarrow \infty$.
\end{theorem}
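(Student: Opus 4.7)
\textbf{Proof plan for Theorem~\ref{theo:pca}.} My strategy is to reduce to the case $O = I_p$ by equivariance, apply the multivariate CLT to the sample covariance matrix, and then pass to the leading eigenvector via standard first-order perturbation theory. Because the $p \times p$ sample covariance of $OX_1,\ldots,OX_n$ equals $O$ times the sample covariance of $X_1,\ldots,X_n$ times $O'$, its leading eigenvector is $O$ times that of the original, so it suffices to prove the result for $O = I_p$; the stated form then follows by applying $O$ and the (linear) delta method. In this reduced setting, $X_i = \Lambda Z_i$, and by sphericity of $Z$ one has $\mathrm{E}(ZZ') = c I_p$ with $c := \mathrm{E}(Z_1^2) \in (0,\infty)$, so the population covariance is $\Sigma = c\sigma^2\, \mathrm{diag}(\lambda^2, 1, \ldots, 1)$ with leading eigenvector $e_1$ and spectral gap $c\sigma^2(\lambda^2-1) > 0$. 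The sign $s_n$ is chosen so that the first coordinate of $s_n u_n$ is non-negative, which makes $s_n u_n \to e_1$ in probability.

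Set $E_n := \hat\Sigma_n - \Sigma$. Under $\mathrm{E}(Z_1^4)<\infty$, every entry of $X X'$ has finite variance, so the multivariate CLT gives $\sqrt{n}\, E_n \rightsquigarrow W$ for a centered Gaussian symmetric matrix $W$. Only the off-diagonal entries $[E_n]_{j1}$ with $j \ge 2$ will matter for the fluctuation of $u_n$. For these, the i.i.d.\ summands are $\sigma^2\lambda\, Z_{ij} Z_{i1}$, which are centered (sphericity implies $\mathrm{E}(Z_j Z_1) = 0$) with variance $\sigma^4 \lambda^2\, \mathrm{E}(Z_1^2 Z_2^2)$; moreover, for distinct indices $j,k \ge 2$, flipping the sign of $Z_j$ via an orthogonal transformation preserving all other coordinates shows $\mathrm{E}(Z_j Z_k Z_1^2) = 0$, so these columns are asymptotically uncorrelated across $j$.

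A standard first-order eigenvector perturbation (via the implicit function theorem applied to the eigenvector equation at the simple leading eigenvalue, or via a Davis--Kahan / resolvent expansion) yields
\begin{align*}
    s_n u_n - e_1 \;=\; \frac{1}{c\sigma^2(\lambda^2 - 1)} \sum_{j = 2}^p [E_n]_{j1}\, e_j \;+\; o_P(n^{-1/2}),
\end{align*}
while the unit-norm constraint $\|u_n\|=1$ forces $1 - [s_n u_n]_1^2 = O_P(1/n)$, so the first coordinate contributes $o_P(n^{-1/2})$ and does not affect the limit. Combining this expansion with the CLT and the moment computation above produces a limiting normal on $\mathrm{span}(e_2,\ldots,e_p)$ with common coordinate variance
\begin{align*}
    \frac{\sigma^4 \lambda^2\, \mathrm{E}(Z_1^2 Z_2^2)}{\{c\sigma^2(\lambda^2 - 1)\}^2} \;=\; \frac{\lambda^2}{(\lambda^2 - 1)^2} \cdot \frac{\mathrm{E}(Z_1^2 Z_2^2)}{\{\mathrm{E}(Z_1^2)\}^2},
\end{align*}
and applying $O$ gives the factor $O(I_p - e_1 e_1') O'$ in the stated covariance.

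The only genuinely technical step is justifying the perturbation expansion rigorously, but this is routine because the leading eigenvalue of $\Sigma$ is simple and $\hat\Sigma_n - \Sigma = O_P(n^{-1/2})$; the rest of the work is the moment bookkeeping made easy by sphericity, plus the standard device of choosing $s_n$ to resolve the eigenvector sign ambiguity.
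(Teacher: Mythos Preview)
Your proposal is correct and follows essentially the same route as the paper's proof: both reduce to a first-order linearization of the leading eigenvector in terms of the perturbation $\hat\Sigma_n-\Sigma$, apply the CLT to the entries $[E_n]_{j1}$, and compute the relevant variances via sphericity (yielding the diagonal form of $\mathrm{E}(Z_1^2 ZZ')$). The only cosmetic differences are that the paper invokes an external linearization result (Theorem~A.1 in \cite{radojivcic2021large}) rather than sketching the perturbation expansion directly, and that you track the scale parameter $\sigma$ explicitly and verify it cancels, whereas the paper effectively works with $\sigma=1$.
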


Thus, under the respective assumptions, both $s_n v_n/\| v_n \|$ and $u_n$ estimate the same vector, namely the direction of the leading principal component of $X$. We conduct the comparison between their limiting covariance matrices with simulations in Section \ref{sec:simulation}, since the expressions of $q_1, q_2$ given in the proof of Theorem \ref{thm:thm8} in Appendix \ref{sec:proofs} depend on quantities such as $\mathrm{E}\{ \| X + \psi o_1 \|/\| X - \psi o_1 \| \}$ that do not admit closed-form expressions.

\section{Computation}\label{sec:algorithm}

Given a fixed sample $X_1, \ldots, X_n \in \mathbb{R}^p$, we propose minimizing the objective function $f_{P_n}$ in \eqref{eq:sample_objective_function} using a modified Weiszfeld-type algorithm, see \cite{beck2015weiszfeld}, producing a sequence of solution candidates $v_1, v_2, \ldots$. As it plays no role in the minimization, we ignore the additional term $-\| X_i \|^2$ in \eqref{eq:sample_objective_function} altogether in this section. Since the objective function is non-differentiable at the points $v = \pm X_i$, $i = 1, \ldots, n$, we derive separate update rules depending on whether the current iterate $v_k$ equals one of the sample points (up to sign), or not.


Assume first that $v_k \neq \pm X_i$, $i = 1, \ldots, n$. Denoting now 
$$
L(v)=\begin{cases}\displaystyle
    \frac{1}{n} \sum_{i = 1}^n  \frac{ \| v + X_i \|^2+\| v - X_i \|^2 } { \| v - X_i \|\| v + X_i \| },\mbox{ for } v\neq \pm X_j,\,j=1,\dots,n,\\
    \displaystyle\frac{1}{n} \sum_{i \neq j}  \frac{ \| v + X_i \|^2+\| v - X_i \|^2 } { \| v - X_i \|\| v + X_i \| },\mbox{ for } v=\pm X_j,\,j=1,\dots,n, 
\end{cases}  
$$
we propose to update the current solution candidate $v_k$ with
\begin{align}\label{eq:T_iterative}
v_{k+1}=T(v_k) := \frac{1}{L(v_{k})}\frac{1}{n} \sum_{i = 1}^n  \frac{ \| v_k + X_i \|^2-\| v_k - X_i \|^2 } { \| v_k - X_i \|\| v_k + X_i \| } X_i.
\end{align}
We observe that this is, in fact, a gradient descent approach with a step size 
$L(v_k)^{-1}$. To see this, Lemma \ref{lemma:kuhn} in Appendix \ref{sec:proofs} shows that, for $v\neq \pm X_i$, $i=1,\dots,n$, the gradient of the objective function has the form
\begin{align*}
    \nabla f_{P_n}(v) = \frac{1}{n} \sum_{i = 1}^n \left\{ \frac{ \| v + X_i \| } { \| v - X_i \| } ( v - X_i ) + \frac{ \| v - X_i \| } { \| v + X_i \| } ( v + X_i ) \right\}.
\end{align*}
Hence, we can write
\begin{align*}
    T(v) & = -\frac{1}{nL(v)} \sum_{i = 1}^n  \frac{ \| v + X_i \| } { \| v - X_i \| } (-X_i) -  \frac{1}{nL(v)} \sum_{i = 1}^n  \frac{\| v - X_i \| } { \| v + X_i \| } X_i \\
   & = \frac{-1}{nL(v)}\left( \sum_{i = 1}^n  \frac{ \| v + X_i \| } { \| v - X_i \| } (v-X_i) +  \sum_{i = 1}^n  \frac{\| v - X_i \| } { \| v + X_i \| } (v+X_i)\right)\\
   &~~~~ + v\frac{1}{L(v)}\frac{1}{n}\sum_{i=1}^n \frac{ \| v + X_i \|^2+\| v - X_i \|^2 } { \| v - X_i \|\| v + X_i \| }
   \\
   & = v - \frac{1}{L(v)}\nabla f_{P_n}(v),
\end{align*}
establishing the gradient descent form. The following lemma shows that the step size $L(v)^{-1}$ is chosen such that the iterations decrease the value of the objective $f_{P_n}$, provided the estimates do not, up to sign, coincide with the sample points $X_i$, $i=1,\dots,n$.

\begin{lemma}\label{alg::lemma1}
    For $v\neq \pm X_i$, $i=1,\dots,n$, let $T(v)$ be as defined in \eqref{eq:T_iterative}. Then, $f_{P_n}(T(v))\leq f_{P_n}(v)$, where equality holds if and only if $T(v) = v$.
\end{lemma}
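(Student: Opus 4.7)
The plan is to cast the update as a majorization--minimization step. For a fixed iterate $v$ with $v \neq \pm X_i$ for all $i$, I would define the quadratic surrogate
\begin{align*}
    g_i(u; v) := \frac{1}{2}\left(\frac{\|X_i + v\|}{\|X_i - v\|}\|X_i - u\|^2 + \frac{\|X_i - v\|}{\|X_i + v\|}\|X_i + u\|^2\right),
\end{align*}
and set $g(u; v) := \frac{1}{n}\sum_{i=1}^n g_i(u; v) - \frac{1}{n}\sum_{i=1}^n \|X_i\|^2$. Applying AM--GM to the two summands in $g_i$ gives $g_i(u; v) \geq \|X_i - u\|\|X_i + u\|$ for every $u \in \mathbb{R}^p$, with equality whenever the two summands coincide, which happens in particular at $u = v$. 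Summing over $i$ and subtracting the constant then yields the majorization identities $g(u; v) \geq f_{P_n}(u)$ for all $u$ and $g(v; v) = f_{P_n}(v)$.

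The central calculation is to verify that $T(v)$ is precisely the unique minimizer of $u \mapsto g(u; v)$. Since each $g_i(\cdot; v)$ is a strictly convex quadratic whose Hessian is the positive multiple $\frac{\|X_i+v\|^2+\|X_i-v\|^2}{\|X_i+v\|\|X_i-v\|} I_p$ of the identity, $g(\cdot; v)$ is strictly convex and has a unique critical point. Setting $\nabla_u g(u; v) = 0$, expanding, and collecting the coefficient of $u$ (which is exactly $L(v)$ in the $v \neq \pm X_i$ branch) reproduces on the nose the closed-form update \eqref{eq:T_iterative}.

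Combining these two pieces yields the descent chain
\begin{align*}
    f_{P_n}(T(v)) \leq g(T(v); v) \leq g(v; v) = f_{P_n}(v),
\end{align*}
where the first inequality is the majorization property and the second uses that $T(v)$ minimizes $g(\cdot; v)$. Equality throughout forces $g(T(v); v) = g(v; v)$, and by strict convexity together with uniqueness of the minimizer this can only occur when $T(v) = v$; the converse is trivial.

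I do not foresee a real obstacle. The hypothesis $v \neq \pm X_i$ ensures that every denominator in $g_i$ and $L(v)$ is positive, so the surrogate is well-defined, and the remainder is bookkeeping. The one place that warrants care is the algebraic identification of the critical point of $g(\cdot; v)$ with the expression \eqref{eq:T_iterative}, but this is a direct computation, aided by recognising the diagonal Hessian coefficient as $L(v)$.
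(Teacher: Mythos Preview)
Your proposal is correct and essentially identical to the paper's proof. The paper defines the same quadratic surrogate (written as $h(u,v)$ without the factor $\tfrac12$ and the constant shift), shows it is strictly convex in $u$ with unique minimizer $T(v)$, and then derives the majorization $h(u,v)\ge 2f_{P_n}(u)$ by rewriting it as a sum of squares plus $2f_{P_n}(u)$, which is exactly your AM--GM step unpacked; the descent chain and the strict-convexity argument for the equality case are the same.
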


In case the current solution candidate $v_k$ is, up to sign, equal to one of the sample points, Lemma \ref{lemma:kuhn} in Appendix \ref{sec:proofs} shows that minimizers should be searched among the null points of a \textit{modified gradient}; see Lemma \ref{lemma:kuhn} for more details. Let $M$ denote the maximal non-zero value in the set $\{ 2 \| X_1 \|, \ldots, 2 \| X_n \| \}$. Then, we update $v_k$ as
\begin{equation}\label{eq:modified_T}
v_{k+1}=T_i(\pm X_i) :=\begin{cases}
    \pm X_i,\quad &\text{if}\quad \|\nabla f_i\|-2\|X_i\| \leq 0\\
    \pm X_i - \varepsilon d_i,\quad &\text{if}\quad \|\nabla f_i\|-2\|X_i\| > 0, 
\end{cases}
\end{equation}
where $0<\varepsilon< M$ is chosen such that $f_{P_n}(T_i(\pm X_i))<f_{P_n}(\pm X_i)$, and $d_i := -\nabla f_i/\|\nabla f_i\|$ is a unit vector in the direction of $-\nabla f_i$, see Lemma \ref{lemma:kuhn} for its definition. The following lemma shows that there indeed exists $\varepsilon$ such that the mapping $\pm X_i \mapsto \pm X_i-\varepsilon d_i$ lowers the objective.

\begin{lemma}\label{alg::lemma2}
    For $v = \pm X_i$, $i = 1,\dots,n$, let $T_i(v)$ be as defined in \eqref{eq:modified_T}. Then there exists $\varepsilon \in (0, M)$ such that $f_{P_n}(T_i(v))\leq f_{P_n}(v)$, where equality holds if and only if $T_i(v) = v$.
\end{lemma}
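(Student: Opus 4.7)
The plan is to dispatch the two branches of \eqref{eq:modified_T} separately. In the first branch, $\|\nabla f_i\| - 2\|X_i\| \leq 0$, the definition forces $T_i(v) = v$, so $f_{P_n}(T_i(v)) = f_{P_n}(v)$ trivially and both directions of the iff-clause are immediate.

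For the second branch, $\|\nabla f_i\| - 2\|X_i\| > 0$, I will produce $\varepsilon \in (0, M)$ with strict descent $f_{P_n}(v - \varepsilon d_i) < f_{P_n}(v)$, where $v = \pm X_i$. The approach is a one-sided first-order expansion of $f_{P_n}$ along the ray $\varepsilon \mapsto v_\varepsilon := v - \varepsilon d_i$. I will split $f_{P_n}(w)$ into the $i$-th summand $\frac{1}{n}\|w - X_i\|\|w + X_i\|$, which is the sole source of non-differentiability at $v$, and the smooth remainder $\frac{1}{n}\sum_{j \neq i}\|w - X_j\|\|w + X_j\|$ (assuming, after preprocessing to remove duplicates, that no other sample coincides with $\pm X_i$). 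For the singular summand, $\|d_i\| = 1$ ensures that one of $\|v_\varepsilon - X_i\|, \|v_\varepsilon + X_i\|$ is exactly $\varepsilon$, while the other equals $2\|X_i\| + O(\varepsilon)$ by direct Taylor expansion, so its contribution to $f_{P_n}(v_\varepsilon) - f_{P_n}(v)$ is $(2\|X_i\|/n)\varepsilon + O(\varepsilon^2)$. The smooth remainder contributes $-\varepsilon\, d_i \cdot G + O(\varepsilon^2)$, where $G$ is its classical gradient at $v$; Lemma \ref{lemma:kuhn} identifies this inner product so that the combined first-order change has the sign of $2\|X_i\| - \|\nabla f_i\|$.

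By the case hypothesis this coefficient is strictly negative, so for all sufficiently small $\varepsilon > 0$ the expansion delivers $f_{P_n}(v_\varepsilon) < f_{P_n}(v)$; picking any such $\varepsilon$ that additionally satisfies $\varepsilon < M$ completes existence. Strictness of the inequality then rules out the equality case in this branch, since $T_i(v) = v - \varepsilon d_i \neq v$, matching the iff-clause.

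The main obstacle will be the careful handling of the non-smooth $i$-th summand at $v$. One of its two factors behaves like $\|w - v\|$ (cone-shaped, not linear), giving a nonzero one-sided directional derivative $2\|X_i\|/n$ in every outgoing direction, which is exactly the origin of the $2\|X_i\|$ obstruction in the descent criterion. Marrying this cone-like term to the classical directional derivative of the smooth remainder so as to conclude that $-d_i$ is a genuine descent direction for $f_{P_n}$ is what Lemma \ref{lemma:kuhn} is designed to accomplish, and once that identification is in hand the rest is a standard first-order descent argument.
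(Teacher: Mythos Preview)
Your proposal is correct and matches the paper's approach essentially line for line: both argue the trivial branch immediately, and in the descent branch compute the one-sided directional derivative of $f_{P_n}$ at $v=\pm X_i$ along $-d_i$ by splitting off the singular $i$-th summand (contributing $2\|X_i\|/n$) from the smooth remainder (contributing $-\|\nabla f_i\|/n$ via the computation in the proof of Lemma~\ref{lemma:kuhn}), then pick $\varepsilon$ small enough that the negative linear term dominates. The paper phrases the final step through the definition of the limit of the difference quotient rather than an $O(\varepsilon^2)$ expansion, but the content is identical.
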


Lemmas \ref{lem:Tv_is_v_1} and \ref{lem:Tv_is_v_2} in Appendix Z show that, for all $v \neq 0$, we have $T(v) \neq -v$. This explains why in Lemmas \ref{alg::lemma1} and \ref{alg::lemma2} a necessary condition for the equality is $T(v) = v$, even though the objective function is symmetric in the sense that $f_{P_n}(-v) = f_{P_n}(v)$. We note also that the upper bound $\varepsilon < M$ in Lemma \ref{alg::lemma2} is set precisely for this reason, see Lemma \ref{lem:Tv_is_v_2} for more details.\\

Based on Lemmas \ref{alg::lemma1} and \ref{alg::lemma2}, given a fixed sample $X_1, \ldots, X_n \in \mathbb{R}^p$, we propose  minimizing the objective function $f_{P_n}$ in \eqref{eq:sample_objective_function} using Algorithm~\ref{alg}. Observe that the iterative scheme defined by \eqref{eq:T_iterative} and \eqref{eq:modified_T} depends on an initial approximation $v_0$. We choose the \textit{informative} initial approximation defined as the leading eigenvector $u_0$ of the sample covariance matrix of the random sample $X_1,\dots X_n$. As $u_0$ is a unit length vector, one could additionally tune its length by optimizing $f_{P_n}$ in the direction of $u_0$. For this purpose, for unit length $u\neq \pm X_i/\|X_i\|$, $i=1,\dots,n$, define $g_{P_n}(\lambda;u) := f_{P_n}(\lambda u)$. Lemma~\ref{lem:convexity_of_h}, along with the fact that sum of convex functions is a convex function, gives that for every fixed unit length $u\neq \pm X_i/\|X_i\|$, $i=1,\dots,n$, the map $\lambda\mapsto g_{P_n}(\lambda^{1/2};u)$ is strictly convex, making its optimization straightforward via standard gradient descent.


\begin{algorithm}
\caption{Iterative minimization of \eqref{eq:sample_objective_function}}\label{alg}
\begin{algorithmic}[1]
\Require Sample $X_1,\dots,X_n$, tolerance $\Delta > 0$; 
\State $v_{\mathrm{old}}$ $\gets$ leading eigenvector of $\displaystyle\frac{1}{n}\sum_{i=1}^n(X_i-\bar{X})(X_i-\bar{X})'$, for $\displaystyle\bar{X}=\frac{1}{n}\sum_{i=1}^n X_i$;
\State $err\gets \Delta+1$;
\While{$err > \Delta$}
\If{$v_{\mathrm{old}}\neq \pm X_i$, $i=1,\dots,n$}
    \State $v_{\mathrm{new}}\gets T(v_{\mathrm{old}})$;
    \State $err\gets f_{P_n}(v_{\mathrm{old}})-f_{P_n}(v_{\mathrm{new}})$;
\ElsIf{$v_{\mathrm{old}} = \pm X_i$, for some $i = 1,\dots,n$}
     \State $v_{\mathrm{new}}\gets T_i(v_{\mathrm{old}})$;
     \If{$\|\nabla f_i\|-2\|X_i\| \leq 0$}
    \State $err\gets f_{P_n}(v_{\mathrm{old}})-f_{P_n}(v_{\mathrm{new}})$;
    \ElsIf{$\|\nabla f_i\|-2\|X_i\| > 0$} 
    \State $err\gets \Delta+1$; 
    \EndIf
\EndIf 
\State $v_{\mathrm{old}} \gets v_{\mathrm{new}}$;
\EndWhile
\If{$v_{\mathrm{new}} \neq 0$}
\State $\lambda \gets \|v_{\mathrm{new}}\|$;
\If{$v_{\mathrm{new}}\neq \pm X_i$, $i=1,\dots,n$}
    \State $\lambda \gets\argmin_{\lambda>0}g_{P_n}(\lambda;v_{\mathrm{new}}/\|v_{\mathrm{new}}\|)$;
\EndIf
\State \textbf{return} $\lambda v_{\mathrm{new}}/\|v_{\mathrm{new}}\|$;
\ElsIf{$v_{\mathrm{new}} = 0$}
\State \textbf{return} $v_{\mathrm{new}}$;
\EndIf
\end{algorithmic}
\end{algorithm}

The artificial increase of the error in \texttt{Step 12} of the Algorithm is to ensure that the convergence has not been reached simply due to the arbitrarily small update required to ``jump away'' from a non-optimal sample point $X_i$ to a non-sample point, or to a not-yet-explored sample point; see the proof of Corollary \ref{cor:alg} for more insight. We emphasize, however, that in practice, Algorithm~\ref{alg} does not reach the sample points, and thus \texttt{Steps 7 - 13} are, in some sense, obsolete. A similar observation was made in \cite{beck2015weiszfeld} for the modified Weiszfeld’s method for finding the spatial median. For the same reason we have also not included the selection of $\varepsilon$ in Lemma \ref{alg::lemma2} to Algorithm \ref{alg}. However, if wanted, its proper value could easily be found with a line search; see the proof of Lemma \ref{alg::lemma2}.

Algorithm \ref{alg} could be modified by additionally optimizing for the length of the current approximation $v_k$ in \texttt{Step 5}. However, numerical experiments showed that this strategy does not improve the performance of the algorithm, and we thus avoid it. The implementation of Algorithm \ref{alg} is available in \url{https://github.com/jmvirta/spatialPCA/}.

Finally, the following result guarantees that the main iterative part of Algorithm \ref{alg} (the \texttt{while}-loop) converges within a finite number of steps.

\begin{corollary}\label{cor:alg}
    For every tolerance $\Delta>0$, Algorithm \ref{alg} converges after a finite number of iterations.
\end{corollary}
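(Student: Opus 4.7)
The plan is to exploit the monotone decrease of the objective function along the iterates together with its lower-boundedness, showing that only finitely many iterations can strictly reduce $f_{P_n}$ by more than $\Delta$. Let $v_0, v_1, v_2, \ldots$ denote the sequence of approximations produced by the while-loop. Observe first that $f_{P_n}$ is bounded below, since $\| X_i - v \| \| X_i + v \| \geq 0$ implies $f_{P_n}(v) \geq -\tfrac{1}{n} \sum_{i=1}^n \| X_i \|^2$ for every $v$. Lemmas~\ref{alg::lemma1} and~\ref{alg::lemma2} then yield that $k \mapsto f_{P_n}(v_k)$ is non-increasing.

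The first step of the proof would be to handle the visits to sample points. Suppose at some iteration $k$ we have $v_k = \pm X_i$ for some $i \in \{1, \ldots, n\}$. If $\| \nabla f_i \| - 2 \| X_i \| \leq 0$, the loop terminates at Step~11 with $err = 0 \leq \Delta$ and we are done. Otherwise (Step~12), Lemma~\ref{alg::lemma2} guarantees the strict decrease $f_{P_n}(v_{k+1}) < f_{P_n}(v_k) = f_{P_n}(\pm X_i)$. Since the objective is non-increasing along the whole sequence, no future iterate $v_m$, $m > k$, can equal this particular signed sample point $\pm X_i$. Consequently, the branch in Step~12 is triggered at most $2n$ times throughout the execution of the algorithm.

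Next, I would count the remaining (case~1) iterations, in which $v_k \neq \pm X_i$ for every $i$. By Lemma~\ref{alg::lemma1}, each such iteration either produces $err \leq \Delta$, in which case the loop terminates, or yields a strict decrease $f_{P_n}(v_k) - f_{P_n}(v_{k+1}) > \Delta$. By the telescoping identity and the lower bound on $f_{P_n}$, the number of case~1 iterations of the latter type cannot exceed $(f_{P_n}(v_0) - \inf f_{P_n})/\Delta$, which is finite. Combining the two bounds, the total number of iterations before termination is at most $2n + (f_{P_n}(v_0) - \inf f_{P_n})/\Delta$, proving the claim.

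The main obstacle is the artificial reset $err \gets \Delta + 1$ in Step~12, which breaks the naive monotone-Cauchy argument for the variable $err$ itself. The fix is precisely the combinatorial bound above, which crucially relies on the \emph{strict} decrease in Lemma~\ref{alg::lemma2} to rule out revisiting a sample point and thereby cap the number of artificial resets at $2n$. One should also verify that Step~5's update $v_\mathrm{new} = T(v_\mathrm{old})$ never accidentally produces $v_\mathrm{new} = -v_\mathrm{old}$ (which would give $f_{P_n}(v_\mathrm{new}) = f_{P_n}(v_\mathrm{old})$ by symmetry and stall the loop at a non-minimizer); this is handled by Lemmas~\ref{lem:Tv_is_v_1} and~\ref{lem:Tv_is_v_2} invoked earlier in the section, which ensure $T(v) \neq -v$ for all $v \neq 0$ and hence that the equality case in Lemma~\ref{alg::lemma1} occurs only at genuine fixed points.
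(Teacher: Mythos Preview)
Your proof is correct and follows the paper's strategy: monotone decrease plus lower-boundedness of $(f_{P_n}(v_k))_k$, together with the observation that Step~12 can be triggered only finitely often because the strict decrease in Lemma~\ref{alg::lemma2} prevents revisiting any sample point. The only minor difference is in the finishing step for the non-sample-point iterations: the paper argues that the bounded monotone sequence $(f_{P_n}(v_k))_k$ converges and hence is eventually $\Delta$-Cauchy, whereas you observe directly that each non-terminating Step~5 iteration must drop the objective by more than~$\Delta$, giving the explicit bound $2n+(f_{P_n}(v_0)-\inf f_{P_n})/\Delta$; the paper also notes that $n$ (rather than $2n$) suffices for the Step~12 count, since $f_{P_n}(-X_i)=f_{P_n}(X_i)$ means a strict decrease from one sign rules out both.
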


\section{Simulations}\label{sec:simulation}

\subsection{Simulation study \#1}

The purpose of this simulation is both to demonstrate Theorem \ref{theo:main_1} in practice and to investigate its robustness to our assumptions. We generated three-variate data with independent margins that admit the covariance matrix $\mathrm{diag}(\theta^2, 1, 1)$, $\theta \geq 1$, from three models. In the first model, the marginal distributions were taken to be normal, in the second model uniform on intervals, and in the third symmetric scaled Bernoulli variates. Hence, the first model falls within the scope of Theorem \ref{theo:main_1}, whereas the remaining two do not.

We generated $n = 100, 200, 400, 800$ observations from the models and considered a regular grid of 20 standard deviation values from $\theta = 1$ to $\theta = 3$. Each combination of settings was repeated 1000 times and in each case we estimated $v_n$, using Algorithm \ref{alg}. We then computed the values of $| v_{n1} |$ and $\sqrt{v_{n2}^2 + v_{n3}^2}$, the averages of which over the replicates are shown in Figure \ref{fig:simu_1}. The vertical lines in the plot correspond to the approximate value $1.815$, computed numerically using the function $I_3(\lambda)$ in Appendix \ref{sec:integration}, which is the threshold beyond which $\phi(S_P) > 1/2$ and the first principal direction is identifiable under elliptical data, see Theorem~\ref{theo:main_1}. 

\begin{figure}
    \centering
    \includegraphics[width=1.0\linewidth]{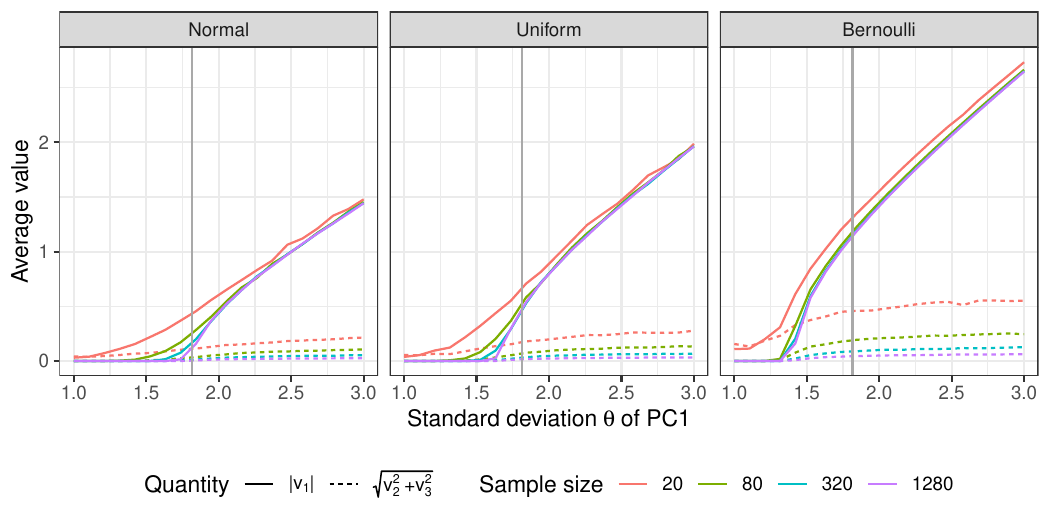}
    \caption{The average values of $| v_{n1} |$ and $\sqrt{v_{n2}^2 + v_{n3}^2}$ over 1000 replicates for different combinations of sample size $n$, model and $\theta$ when $\mathrm{Cov}(X) = \mathrm{diag}(\theta^2, 1, 1)$. The grey vertical line indicates the identifiability threshold predicted by Theorem \ref{theo:main_1} for elliptical distributions.}
    \label{fig:simu_1}
\end{figure}

From Figure \ref{fig:simu_1} we observe the following: (i) Under normality, the results confirm the theoretical claims in Theorem \ref{theo:main_1}: for large $n$ there exists a cut-off point close to the value $\theta = 1.815$ such that, for $\theta$ smaller than this, $| v_{n1} |$ converges to zero and, for $\theta$ larger than this value, $| v_{n1} |$ converges to a positive constant. Moreoever, $\sqrt{v_{n2}^2 + v_{n3}^2}$ appears to converge to zero for all values of $\theta$. (ii) The analogous holds true also for the uniform and binary models, but the threshold for the behavior is smaller than under normality. E.g., for Bernoulli data, having approximately $\theta > 1.3$ appears sufficient to identify the direction of the first principal component. This effect is likely caused by the more degenerate nature of the data under uniform and Bernoulli models, which also makes the method more sensitive to small sample sizes, as evidenced by the slower convergence of the lines corresponding to $\sqrt{v_{n2}^2 + v_{n3}^2}$ under these models.

\begin{figure}
    \centering
    \includegraphics[width=1.0\linewidth]{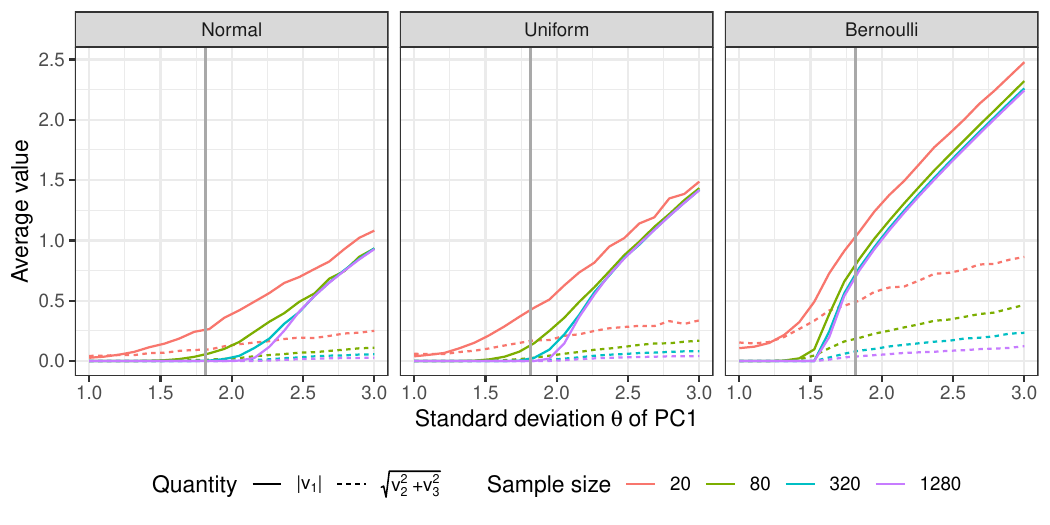}
    \caption{The average values of $| v_{n1} |$ and $\sqrt{v_{n2}^2 + v_{n3}^2}$ over 1000 replicates for different combinations of sample size $n$, model and $\theta$ when $\mathrm{Cov}(X) = \mathrm{diag}(\theta^2, \theta, 1)$. The grey vertical line indicates the identifiability threshold predicted by Theorem \ref{theo:main_1} for elliptical distributions.}
    \label{fig:simu_2}
\end{figure}

Next, we changed the covariance matrix of the data to $\mathrm{diag}(\theta^2, \theta, 1)$, keeping otherwise the same specifications, and reran the simulation. The purpose here is thus to investigate whether the results continue to hold when the final $p - 1$ eigenvalues are not identical. The results are shown in Figure~\ref{fig:simu_2} revealing that this is indeed the case. However, having unequal tail variances appears to increase the identifiability threshold. E.g., for normal data, a standard deviation of roughly $2.1$ is required for the minimizer to attain the direction of PC1. This is not unexpected since increasing the tail variability makes it more difficult to detect the direction of largest variation, requiring it to stand out more.


All in all, based on the simulations, we conclude that the behavior predicted in Theorem \ref{theo:main_1} is not solely confined to the elliptical model $\mathcal{E}(O, \Lambda)$ but continues to hold also under various other structures. 

\subsection{Simulation study \#2}

Next, we compare the limiting efficiencies of our proposed estimator $s_n v_n/\|v_n\|$ and the PCA-estimator $u_n$. We do this by numerically evaluating the expected values required to compute the limiting covariance matrices of the two estimators in Corollary \ref{cor:asymp_split} and Theorem \ref{theo:pca}. As our setting, we take the model $\mathcal{E}(O, \Lambda)$ with $Z$ having a standard multivariate $t_{\nu}$-distribution with dimensionality $p = 3, 5, 10, 50, 100$ and degrees of freedom $\nu = 3,5,10$. 
We consider a range of values $\lambda = 2,\dots,40$ and, without loss of generality, take $O = I_p$ and $\sigma^2 = 1$. The quantities are estimated based on $1000$ replications of samples of size $n = 500,1000,10000$ and the logarithmic spectral norms of the estimated asymptotic covariance matrices (ASCOV) are plotted as a function of $\lambda$ in Figure \ref{fig:fig4}. Thus, lower values indicate a more efficient estimator. We use the PCA-solution as an initial value for Algorithm \ref{alg}, meaning that the experiment essentially investigates whether our proposal can improve upon or ``refine'' this starting value. 

\begin{figure}[H]
    \centering
    \includegraphics[width=0.9\linewidth]{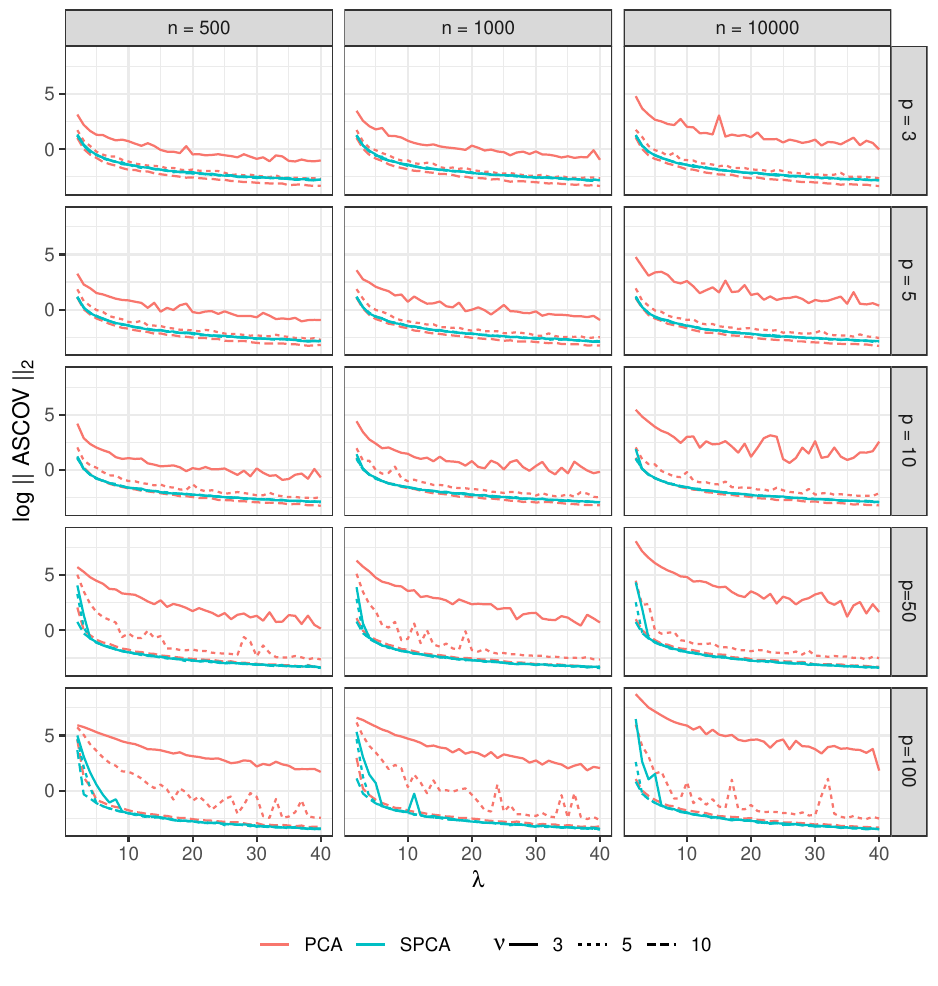}
    \caption{The logarithmic spectral norm of the estimated asymptotic covariance $ASCOV$ of $\sqrt{n}(s_n\tfrac{w_n}{\|w_n\|}-o_1)$, where $w_n/\|w_n\|$ is the 
    unit norm estimator of the leading principal direction based on the proposed approach (SPCA, blue) and PCA (red).  
    }
    \label{fig:fig4}
\end{figure}
Figure \ref{fig:fig4} suggests that the efficiency of the proposed unit-length estimator surpasses that of the standard PCA estimator for heavy-tailed data. Interestingly, this happens even when the signal is not strong enough, i.e., when $\lambda$ is too small in the sense of Theorem \ref{theo:main_1}. Investigations reveals that in these cases we still have $\| w_n \| \approx 0 $ (as claimed in Theorem \ref{theo:main_1}), but that the convergence to the origin occurs from the ``right direction'' in such a manner that $w_n/\| w_n \| \rightarrow_p o_1$. This phenomenon is not predicted by our theoretical results and should be investigated further in future work. Note also that the same effect could not be observed in the previous simulation, where we plotted not $w_n/\| w_n \|$ but the raw values of the components of $w_n$ instead. We also observe that the difference between the methods under heavy-tailed data is more pronounced in higher-dimensional scenarios. Moreover, as expected, the efficiency of PCA increases as the number $\nu$ of degrees of freedom of the underlying $t$-distribution increases, i.e., the data has lighter tails and resembles the Gaussian distribution more. On the other hand, the behavior of our proposed method seems largely unaffected by the choice of~$\nu$. 




\subsection*{Acknowledgments}

The work of JV was supported by the Research Council of Finland under Grants 347501, 353769. This research was funded in whole or in part by the Austrian Science Fund (FWF) [10.55776/I5799]. 

\appendix

\section{Proofs of technical results}\label{sec:proofs}

\subsection{Proofs related to Section \ref{sec:introduction}}\label{sec:proofs_1}

The following lemmas prove the claims used to motivate our concept in the first paragraph of Section \ref{sec:introduction}.

\begin{lemma}\label{lem:nuclear_norm}
    For $a, b \in \mathbb{R}^p$, we have $\| a a' - b b' \|_* = \| a - b \| \| a + b \|$.    
\end{lemma}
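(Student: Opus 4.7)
The plan is to use the fact that $M := aa' - bb'$ is symmetric of rank at most two, so its nuclear norm equals $|\lambda_1| + |\lambda_2|$, where $\lambda_1, \lambda_2$ are its (possibly zero) nonzero eigenvalues. I would compute these eigenvalues only implicitly, via their elementary symmetric functions, and then match the resulting expression with $\|a-b\|\|a+b\|$.

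Concretely, first I would note that $\operatorname{tr}(M) = \|a\|^2 - \|b\|^2$ gives $\lambda_1 + \lambda_2$, and a direct expansion
\begin{equation*}
  M^2 = \|a\|^2 aa' - (a'b)\, ab' - (a'b)\, ba' + \|b\|^2 bb'
\end{equation*}
yields $\operatorname{tr}(M^2) = \|a\|^4 + \|b\|^4 - 2(a'b)^2 = \lambda_1^2 + \lambda_2^2$. Combining these gives
\begin{equation*}
  \lambda_1 \lambda_2 = \tfrac{1}{2}\bigl[(\lambda_1+\lambda_2)^2 - (\lambda_1^2+\lambda_2^2)\bigr] = (a'b)^2 - \|a\|^2\|b\|^2 \le 0
\end{equation*}
by the Cauchy--Schwarz inequality. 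This sign information is the key point: since $\lambda_1 \lambda_2 \le 0$, the two eigenvalues have opposite signs (one may be zero), so $|\lambda_1| + |\lambda_2| = |\lambda_1 - \lambda_2|$.

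It then remains to compute $(\lambda_1 - \lambda_2)^2 = (\lambda_1+\lambda_2)^2 - 4\lambda_1\lambda_2$, which equals $(\|a\|^2 + \|b\|^2)^2 - 4(a'b)^2$, and to observe that this coincides with $\|a-b\|^2\|a+b\|^2$ via the identity
\begin{equation*}
  \|a-b\|^2 \|a+b\|^2 = \bigl(\|a\|^2+\|b\|^2 - 2a'b\bigr)\bigl(\|a\|^2+\|b\|^2 + 2a'b\bigr).
\end{equation*}
Taking square roots gives the claim. There is no real obstacle here; the only subtle point is recognising that Cauchy--Schwarz is exactly what forces the two eigenvalues to have opposite signs, which is what turns $|\lambda_1|+|\lambda_2|$ (a quantity depending on absolute values) into the algebraically tractable $|\lambda_1 - \lambda_2|$.
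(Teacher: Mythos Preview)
Your proof is correct and follows essentially the same route as the paper: both compute the two eigenvalues of the rank-$\leq 2$ symmetric matrix via $\operatorname{tr}(M)$ and $\operatorname{tr}(M^2)$, use Cauchy--Schwarz to establish that they have opposite signs, and conclude that the nuclear norm equals $|\lambda_1 - \lambda_2|$. Your version is slightly more streamlined in that working directly with $\lambda_1\lambda_2$ avoids the explicit quadratic formula and the separate treatment of the parallel case $a = \theta b$ that the paper carries out.
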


\begin{proof}[Proof of Lemma \ref{lem:nuclear_norm}]
  The nuclear norm of a symmetric matrix is the sum of the absolute values of its eigenvalues. Hence, we next obtain expressions for the eigenvalues of the at most rank-two matrix $A := a a' - b b'$.

  Assume first that $a \nparallel b$, in which case $A$ has exactly two non-zero eigenvalues which we call $\phi_{+}$ and $\phi_{-}$. These satisfy $\phi_{+} + \phi_{-} = \mathrm{tr}(A)$ and $\phi_{+}^2 + \phi_{-}^2 = \mathrm{tr}(A^2)$, and by computing the traces and using the quadratic formula, we obtain
  \begin{align*}
      \phi_\pm = \frac{1}{2} \left\{ \| a \|^2 - \| b \|^2 \pm [ (\| a \|^2 - \| b \|^2)^2 - 4 (a'b)^2 + 4 \| a \|^2 \| b \|^2 ]^{1/2} \right\}.
  \end{align*}
  Since $\| a \|^2 \| b \|^2 \geq (a'b)^2$, we have that $\phi_{-} \leq 0$. Similarly, we obtain that $\phi_{+} \geq 0$, giving
  \begin{align*}
      \| a a' - b b' \|_* = \phi_{+} - \phi_{-} = [ (\| a \|^2 - \| b \|^2)^2 - 4 (a'b)^2 + 4 \| a \|^2 \| b \|^2 ]^{1/2},
  \end{align*}
  which is straightforwardly seen to be equal to $\| a - b \| \| a + b \| = \{ ( \| a \|^2 - 2a'b + \| b \|^2) ( \| a \|^2 + 2a'b + \| b \|^2) \}^{1/2}$. For the case $b = \lambda a$ for some $\lambda$, we have $\| a a' - b b' \|_* = | 1 - \lambda^2 | \| a a' \|_* = | 1 - \lambda^2 | \| a \|^2$ and $\| a - b \| \| a + b \| = | 1 - \lambda| | 1 + \lambda | \| a \|^2 = | 1 - \lambda^2 | \| a \|^2$, proving the claim. 
\end{proof}

\begin{lemma}\label{lem:interpretation}
    Fix $v \in \mathbb{R}^p$ and $R > 0$, and let $\mathcal{S}_R := \{ x \in \mathbb{R}^p \mid \| x \| = R \}$. Then, the unique minimizers of the function
    \begin{align*}
        x \mapsto \| x - v \| \| x + v \|
    \end{align*}
    in $\mathcal{S}_R$ are $\pm R v/\| v \|$.
\end{lemma}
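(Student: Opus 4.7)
The plan is to reduce the problem to maximizing a single inner product on the sphere, where Cauchy--Schwarz gives a clean answer. Since the function $x \mapsto \|x-v\|\|x+v\|$ is non-negative, I can equivalently minimize its square, which is what I would do first.

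For $x \in \mathcal{S}_R$, I would expand
\begin{align*}
\|x-v\|^2 \|x+v\|^2 &= \bigl(\|x\|^2 - 2x'v + \|v\|^2\bigr)\bigl(\|x\|^2 + 2x'v + \|v\|^2\bigr) \\
&= \bigl(R^2 + \|v\|^2\bigr)^2 - 4 (x'v)^2,
\end{align*}
using $\|x\|^2 = R^2$. The first term is a constant on $\mathcal{S}_R$, so minimizing the product is equivalent to maximizing $(x'v)^2$ subject to $\|x\| = R$.

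By Cauchy--Schwarz, $(x'v)^2 \leq \|x\|^2 \|v\|^2 = R^2 \|v\|^2$, with equality if and only if $x$ and $v$ are linearly dependent. Intersecting this dependence condition with the constraint $\|x\|=R$ leaves exactly the two vectors $x = \pm R v/\|v\|$, which are therefore the unique maximizers of $(x'v)^2$ on $\mathcal{S}_R$ and hence the unique minimizers of the original function. (Implicit in the statement is $v \neq 0$; if $v = 0$ the function is identically $R^2$ and the minimizer set is all of $\mathcal{S}_R$.)

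There is no real obstacle here: the argument is essentially ``expand, cancel, apply Cauchy--Schwarz''. The only thing to be mildly careful about is the uniqueness claim, which requires invoking the equality case of Cauchy--Schwarz together with the norm constraint to pin down the sign.
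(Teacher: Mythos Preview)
Your proof is correct and follows essentially the same route as the paper's own argument: square the objective, expand to $(\|x\|^2+\|v\|^2)^2-4(x'v)^2$, reduce to maximizing $(x'v)^2$ on the sphere, and finish with the equality case of Cauchy--Schwarz. Your explicit note that the statement tacitly assumes $v\neq 0$ is a welcome addition.
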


\begin{proof}[Proof of Lemma \ref{lem:interpretation}]
    Without loss of generality, we instead minimize the function $h(x) := \| x - v \|^2 \| x + v \|^2$ which can be written as
    \begin{align*}
        h(x) = (\| x \|^2 + \| v \|^2)^2 - 4 (x'v)^2.
    \end{align*}
    Hence, for $x \in \mathcal{S}_R$, minimizing $h(x)$ is equivalent to maximizing $x \mapsto (x'v)^2$ which, by the Cauchy-Schwarz inequality, is achieved precisely at $\pm R v/\| v \|$.
\end{proof}

\subsection{Proofs related to Section \ref{sec:population}}\label{sec:proofs_2}

\begin{proof}[Proof of Theorem \ref{theo:moments}]
    The result follows directly from the alternative form of the objective function, $f_P(v) = \mathrm{E}( \| X X' - v v' \|_* - \| X X' \|_* )$, by using the reverse triangle inequality for the nuclear norm, $|  \| X X' - v v' \|_* - \| X X' \|_* | \leq \| v v' \|_* = \| v \|^2$.
\end{proof}

\begin{proof}[Proof of Theorem \ref{theo:existence}]
    The strategy of the proof is as follows: Lebesgue's dominated convergence theorem shows, using the same bound as in the proof of Theorem \ref{theo:moments}, that $f_P$ is continuous. We then search for a large enough $H > 0$ such that $f_P(v) > f_P(0)$ for all $v \notin \mathcal{K}_H := \{ v \in \mathbb{R}^p \mid \| v \| \leq H \}$. The continuity of $f_P$ and the compactness of the set $\mathcal{K}_H$ then together guarantee that $f_P$ admits a global minimizer in $\mathcal{K}_H$.
    
    For $R > 0$, we denote $\mathcal{B}(R) := \{ x \in \mathbb{R}^p \mid \| x \| \leq R \}$. Let next $G$ denote the distribution of $\| X \|$ and let $Q_q(G)$ be the $q$th  quantile of $G$. Fixing then $R_0 := Q_{0.75}(G)$, we see that $2 P \{ X \in \mathcal{B}(R_0) \} - 1 > 0$ holds. Then, by the triangle inequality,
    \begin{align*}
        & \mathrm{E} \{ \mathbb{I}[X \in \mathcal{B}(R_0)] ( \| X - v \| \| X + v \| - \| X \|^2 ) \} \\
        \geq& \mathrm{E} \{ \mathbb{I}[X \in \mathcal{B}(R_0)] ( \| v \|^2 - 2 \| v \| \| X \| - \| X \|^2 ) \} \\
        \geq& P \{ X \in \mathcal{B}(R_0) \}  \| v \|^2 - 2 \| v \| R_0 - R_0^2.
    \end{align*}
    The first inequality above is obtained by first noting, using the notation of the proof of Theorem \ref{theo:moments}, that
    \begin{align*}
    \| X - v \| \| X + v \| = \left\| \begin{pmatrix}
            \| X \|^2 - \| v \|^2 \\
            A_0^{1/2}
        \end{pmatrix} \right\| \geq | \| X \|^2 - \| v \|^2 |.
    \end{align*}
    and then observing that $| \| X \|^2 - \| v \|^2 | \geq \| v \|^2 - 2 \| v \| \| X \|$, which can be verified by separately checking the cases $\| X \| > \| v \|$ and $\| X \| \leq \| v \|$.
    
    Similarly, using the nuclear norm form of the objective and the bound in the proof of Theorem \ref{theo:moments}, 
    \begin{align*}
        & \mathrm{E} \{ \mathbb{I}[X \notin \mathcal{B}(R_0)] ( \| X - v \| \| X + v \| - \| X \|^2 ) \} \\
        \geq& 
        - \mathrm{E} \{ \mathbb{I}[X \notin \mathcal{B}(R_0)]  \| v \|^2  \} \\
        =& -P \{ \mathbb{R}^p \setminus \mathcal{B}(R_0) \} \| v \|^2.
    \end{align*}
    Consequently, for a fixed $v \in \mathbb{R}^p$, our objective function satisfies,
    \begin{align*}
        f_P(v) \geq& (2 P \{ X \in \mathcal{B}(R_0) \} - 1) \| v \|^2 - 2 \| v \| R_0  - R_0^2.
    \end{align*}
    As a function of $\| v \|$, the previous quantity is dominated by the second order term $\| v \|^2$. Let next $H := H_0 + 1$ where $H_0$ is the largest root of the polynomial 
    \begin{align*}
        (2 P \{ X \in \mathcal{B}(R_0) \} - 1) \| v \|^2 - 2 \| v \| R_0 +  - R_0^2 - f_P(0) = 0.
    \end{align*}
    Consequently, we have 
    \begin{align*}
        (2 P \{ X \in \mathcal{B}(R_0) \} - 1) \| v \|^2 - 2 \| v \|  R_0 - R_0^2 > f_P(0).
    \end{align*}
    for all $v \not\in \mathcal{K}_H$. The claim now follows as described in the beginning of the proof.
\end{proof}

We next establish a collection of auxiliary results, using which Theorem~\ref{theo:main_1} is then proven.

\begin{lemma}\label{lem:aux_1}
    Let $u_1, u_2 \in \mathbb{S}^{p - 1}$ be such that $(o_1'u_1)^2 > (o_1'u_2)^2$. Then,
    \begin{align*}
        f_X(u_1) > f_X(u_2),
    \end{align*}
    for any $X \sim P \in \mathcal{E}(O, \Lambda)$.
\end{lemma}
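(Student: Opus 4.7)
The plan is to reduce the claim to showing that $f_P(u)$ is monotone in $a^2 := (o_1'u)^2$ over unit vectors $u$, using the conditional stochastic dominance strategy sketched after Theorem \ref{theo:main_1}. For $\|u\|=1$, the identity $\|X-u\|^2\|X+u\|^2 = (\|X\|^2 + 1)^2 - 4(u'X)^2$ gives
\[
    f_P(u) = \mathrm{E}\bigl[ \sqrt{(\|X\|^2 + 1)^2 - 4(u'X)^2} \bigr] - \mathrm{E}[\|X\|^2],
\]
whose integrand is strictly decreasing in $(u'X)^2$ on the admissible range (note $(u'X)^2 \leq \|X\|^2 \leq (\|X\|^2+1)^2/4$). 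The problem therefore reduces to comparing the joint distribution of $(\|X\|^2, (u'X)^2)$ at $u = u_1$ versus $u = u_2$. Writing $X = O\Lambda Z$ with $Z$ spherical, and rotating freely in the orthogonal complement of $e_1$ (which preserves the distribution of $Z$), one obtains
\[
    \bigl(\|X\|^2,\,(u'X)^2\bigr) \stackrel{d}{=} \bigl(\lambda^2 Z_1^2 + \textstyle\sum_{j\geq 2} Z_j^2,\; (\lambda a Z_1 + \sqrt{1-a^2}\,Z_2)^2\bigr),
\]
so $f_P(u)$ depends on $u$ only through $a^2$; denote the resulting function by $h(a^2)$.

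The central step is to show that $h$ is strictly monotone on $[0,1]$. My plan is to condition on $(Z_1, R)$ with $R := \|Z\|$. Given these two quantities, sphericity of $Z$ implies that $(Z_2,\dots,Z_p)$ is uniform on the sphere of radius $\sqrt{R^2 - Z_1^2}$ in $\mathbb{R}^{p-1}$; consequently
\[
    (u'X)^2 \mid (Z_1, R) \;\stackrel{d}{=}\; \bigl(\lambda a Z_1 + \sqrt{(1-a^2)(R^2 - Z_1^2)}\,V\bigr)^2,
\]
where $V$ is the first coordinate of a uniform vector on $S^{p-2}$, while $\|X\|^2 = \lambda^2 Z_1^2 + R^2 - Z_1^2$ is now a deterministic function of $(Z_1, R)$. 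For fixed $(Z_1, R)$ the inner expectation of the integrand of $h$ is an integral against a known density in a single variable, and its dependence on $a^2$ can be analyzed directly; integrating over the joint law of $(Z_1, R)$ (governed by $f_R$ from Item 2 of $\mathcal{E}(O, \Lambda)$) then yields the required ordering of $h(a_1^2)$ and $h(a_2^2)$.

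The main obstacle is this conditional analysis: the conditional mean of $(u'X)^2$ at fixed $(z_1, r)$ is $\lambda^2 a^2 z_1^2 + (1-a^2)(r^2 - z_1^2)/(p-1)$, whose derivative in $a^2$ changes sign with $(z_1, r)$. A pointwise comparison therefore fails, and the proof must exploit cancellations across the joint distribution of $(Z_1, R)$, making use of the density-monotonicity property of Item 3 of $\mathcal{E}(O, \Lambda)$. Finally, I should flag a discrepancy: the strategy above, driven by the strict \emph{decrease} of the integrand of $f_P$ in $(u'X)^2$, naturally produces $f_P(u_1) < f_P(u_2)$ when $(o_1'u_1)^2 > (o_1'u_2)^2$, i.e., the reverse of the inequality in the lemma. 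This direction is the one consistent with Theorem \ref{theo:main_1}, whose minimizers $\pm \psi o_1$ lie along the unit-sphere points with maximal $(o_1'u)^2$, so the lemma as written appears to contain a typographical error in the conclusion.
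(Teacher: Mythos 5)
You have misidentified the object in the statement. The lemma concerns $f_X$, the \emph{probability density function} of $X$, evaluated at the points $u_1,u_2$ of the unit sphere --- not the objective function $f_P$. The paper keeps these notationally distinct ($f_P$ for the objective, $f_X$ for the density), and the inequality $f_X(u_1) > f_X(u_2)$ simply says that the elliptical density is larger at sphere points better aligned with the major axis $o_1$. That is why the inequality points in the direction it does; there is no typographical error. The paper's proof is short and direct: by the standard representation of elliptical densities, $f_X(x) \propto f_R\bigl((x'O\Lambda^{-2}O'x)^{1/2}\bigr)/(x'O\Lambda^{-2}O'x)^{(p-1)/2}$; after rotating so that $O = I_p$, one notes that for a unit vector $a$ the quadratic form $a'\Lambda^{-2}a = (1/\lambda^2 - 1)a_1^2 + 1$ is strictly decreasing in $a_1^2$ (since $\lambda > 1$), and the claim follows from Item 3 of the definition of $\mathcal{E}(O,\Lambda)$, namely that $r \mapsto f_R(r)/r^{p-1}$ is strictly decreasing. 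None of this appears in your proposal.

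What you have sketched instead is an attack on the \emph{directional} part of Theorem \ref{theo:main_1} (the paper's Theorem \ref{theo:direction}): that on each sphere the objective is minimized at $\pm r o_1$ because the integrand is decreasing in $(u'X)^2$ and $(o_1'X)^2$ conditionally stochastically dominates $(u'X)^2$ given $\|X\|^2$. That is indeed the paper's strategy for that theorem, but the paper proves the required conditional dominance (Lemma \ref{lem:aux_2}) precisely by using the density comparison of Lemma \ref{lem:aux_1} together with a measure-preserving reflection between the spherical caps $C(u,r)$ and $C(o_1,r)$. Your proposal leaves exactly this step open: you condition on $(Z_1,R)$, observe that the conditional mean of $(u'X)^2$ is not monotone in $a^2$ pointwise, and state that the proof ``must exploit cancellations'' without supplying them. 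So even read as a proof of the result you were actually aiming at, the argument has a genuine gap at its central step, and as a proof of Lemma \ref{lem:aux_1} as stated it does not engage with the claim at all.
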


\begin{proof}[Proof of Lemma \ref{lem:aux_1}]
     Without loss of generality, we take $\sigma = 1$, meaning that the final $p - 1$ diagonal elements of $\Lambda$ equal 1. By Theorem 2.9 and formula (2.43) in \cite{fang1990symmetric}, the density function of $X$ is
\begin{align*}
    f_X(x) = \frac{\Gamma(p/2)}{2 \pi^{p/2} |\Lambda|} \frac{f_R((x'O \Lambda^{-2} O' x)^{1/2})}{(x'O \Lambda^{-2} O' x)^{(p - 1)/2}}.
\end{align*}
    By moving to the coordinate system corresponding to the columns of $O$, we may without loss of generality assume that $O = I_p$, in which case the claim is equivalent to showing that
    \begin{align*}
        \frac{f_R((a'\Lambda^{-2} a)^{1/2})}{(a' \Lambda^{-2} a)^{(p - 1)/2}} > \frac{f_R((b'\Lambda^{-2} b)^{1/2})}{(b' \Lambda^{-2} b)^{(p - 1)/2}},
    \end{align*}
    for all $a, b \in \mathbb{S}^{p - 1}$ such that $a_1^2 > b_1^2$. Now $a'\Lambda^{-2} a = (1/\lambda^{2} - 1)a_1^2 + 1$, which is a strictly decreasing function of $a_1^2$ since $\lambda > 1$. The claim then follows from our assumption that $r \mapsto f_R(r)/r^{p - 1}$ is strictly decreasing.
    \end{proof}

    In the following, we consider probability distributions on $p-1$-dimensional spheres embedded in the $p$-dimensional real space. The reference measure on such spheres is the conventional surface measure denoted by $dS$. Moreover, let $f_{\Vert X \Vert^2, (u'X)^2}(x,y)$ be the joint density function, $f_{\Vert X\Vert^2}(x)$ the marginal density function and $f_{(u'X)^2 | \Vert X \Vert^2}(y|x)$ the conditional density function related to the random variables $\Vert X \Vert^2$ and $(u'X)^2$. The next lemma shows that the projection $(o_1'X)^2$ conditionally stochastically dominates projections $(u'X)^2$ onto any other unit vector given $\Vert X \Vert^2$.

\begin{lemma}\label{lem:aux_2}
    Let $X \sim P \in \mathcal{E}(O, \Lambda)$ and let $u \in \mathbb{S}^{p - 1}$ be such that $u \neq \pm o_1$. Then,
    \begin{align*}
        P( (o_1' X)^2 > r \mid \| X \|^2 = R ) > P( (u' X)^2 > r \mid \| X \|^2 = R ),
    \end{align*}
    for all $R > 0$ and $r \in (0, R)$.
\end{lemma}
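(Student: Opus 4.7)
The plan is to reduce the inequality to a comparison of two integrals of a strictly increasing function of $x_1^2$ over two regions of the sphere $S_R := \{x \in \mathbb{R}^p : \|x\|^2 = R\}$, and then to exploit a reflection that swaps $e_1$ and $u$. By working in the coordinate system of $O$, I would assume without loss of generality that $O = I_p$, so $o_1 = e_1$. By the density formula derived in the proof of Lemma~\ref{lem:aux_1}, $f_X(x)$ is proportional to $g(x'\Lambda^{-2}x)$ with $g(y) := f_R(\sqrt{y})/y^{(p-1)/2}$ strictly decreasing by Item~3 in the definition of $\mathcal{E}(O,\Lambda)$. Restricted to $S_R$, the argument reduces to $\sigma^{-2}[(\lambda^{-2}-1)x_1^2 + R]$, a strictly decreasing function of $x_1^2$ since $\lambda > 1$. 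Hence the conditional density of $X$ on $S_R$ with respect to surface measure $dS_R$ equals $c\,h(x_1^2)$ for some $c > 0$ and some strictly increasing $h:[0,R]\to(0,\infty)$.

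With this reduction in hand, setting $A_r := \{x \in S_R : x_1^2 > r\}$ and $B_r := \{x \in S_R : (u'x)^2 > r\}$, the inequality to prove becomes $\int_{A_r} h(x_1^2)\,dS_R > \int_{B_r} h(x_1^2)\,dS_R$. Since $u \neq \pm e_1$, I would introduce the Householder reflection $F := I_p - 2vv'$ with $v := (e_1 - u)/\|e_1 - u\|$, which is orthogonal and for which a direct check gives $Fe_1 = u$ and $Fu = e_1$. Consequently $F$ preserves both $S_R$ and $dS_R$, and writing $\mu$ for the uniform probability on $S_R$, $A := X_1^2$, and $B := (u'X)^2$ for $X \sim \mu$, we obtain the joint symmetry $(A,B) \stackrel{d}{=} (B,A)$.

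Using this symmetry, I would rewrite
\[
E_\mu\bigl[h(A)\bigl(\mathbb{1}\{A > r\} - \mathbb{1}\{B > r\}\bigr)\bigr] = E_\mu\bigl[(h(A)-h(B))\,\mathbb{1}\{A > r\}\bigr],
\]
and then split $\mathbb{1}\{A > r\} = \mathbb{1}\{A > r, B > r\} + \mathbb{1}\{A > r, B \le r\}$; the symmetry kills the former summand (since $h(A)-h(B)$ is swapped to its negative while the indicator is invariant), leaving
\[
E_\mu\bigl[h(A)\bigl(\mathbb{1}\{A > r\} - \mathbb{1}\{B > r\}\bigr)\bigr] = E_\mu\bigl[(h(A)-h(B))\,\mathbb{1}\{A > r,\, B \le r\}\bigr].
\]
On the event $\{A > r,\, B \le r\}$ we have $A > r \ge B$, so the strict monotonicity of $h$ makes the integrand strictly positive.

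The main obstacle I expect is establishing the strict positivity $\mu\{A > r,\, B \le r\} > 0$ for every $r \in (0,R)$ and every $u \neq \pm e_1$, which is what promotes the expectation above from nonnegative to strictly positive. I would handle this by an explicit construction: take the unit vector $w$ in the 2-plane spanned by $e_1$ and $u$ satisfying $u'w = 0$ and $w_1 > 0$ (a short computation yields $w_1 = \sqrt{1 - u_1^2} > 0$), and parameterise $x^*(\theta) := \sqrt{R}(\cos\theta\,e_1 + \sin\theta\,w) \in S_R$. Then $(u'x^*)^2 = R u_1^2 \cos^2\theta$ and $(x_1^*)^2 = R(\cos\theta + w_1 \sin\theta)^2$, and a short case analysis split on whether $r < R(1-u_1^2)$, $r > R u_1^2$, or $r$ lies in between produces a $\theta$ for which both $(x_1^*)^2 > r$ and $(u'x^*)^2 < r$ hold strictly. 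By continuity this places an open neighborhood of $x^*(\theta)$ inside the open subset $\{A > r,\, B < r\}$ of $S_R$, which therefore has positive surface measure, completing the argument.
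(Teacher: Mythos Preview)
Your approach is essentially the paper's: both hinge on the Householder reflection $F$ swapping $e_1$ and $u$, and both reduce the strict inequality to the positivity of the surface measure of $\{x_1^2 > r,\ (u'x)^2 \le r\}$ on the sphere. The paper packages the reflection as a measure-preserving bijection between spherical caps $C(u,r) \to C(o_1,r)$ and then invokes Lemma~\ref{lem:aux_1}; you package it as exchangeability $(A,B)\stackrel{d}{=}(B,A)$ under the uniform measure and carry out the symmetry cancellation directly. These are two ways of writing the same computation, and your reduction to $E_\mu[(h(A)-h(B))\mathbb{1}\{A>r,B\le r\}]$ is correct.

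There is, however, a concrete slip in your positivity construction. The vector $w$ you define is orthogonal to $u$ but \emph{not} to $e_1$ (indeed $e_1'w = w_1 = \sqrt{1-u_1^2} > 0$), so the curve $x^*(\theta) = \sqrt{R}(\cos\theta\,e_1 + \sin\theta\,w)$ does \emph{not} lie on $S_R$: one computes $\|x^*(\theta)\|^2 = R(1 + w_1\sin 2\theta)$. The fix is immediate: parametrise with the orthonormal pair $\{u,w\}$ instead, setting $x^*(\theta) = \sqrt{R}(\cos\theta\,u + \sin\theta\,w) \in S_R$. Then $(u'x^*)^2 = R\cos^2\theta$ and $(e_1'x^*)^2 = R\cos^2(\theta-\psi)$ for some $\psi \in (0,\pi)$, and since $\psi \notin \{0,\pi\}$ one easily finds, for any $r/R \in (0,1)$, a value of $\theta$ with $\cos^2\theta < r/R < \cos^2(\theta-\psi)$. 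The paper, for its part, simply asserts the corresponding positivity without argument.
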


\begin{proof}[Proof of Lemma \ref{lem:aux_2}]
    Without loss of generality, the scale equivariance of the problem allows us to consider the case $R = 1$ only. For $u \in \mathbb{S}^{p - 1}$ and $r \in (0, 1)$, let $C(u, r) := \{ v \in \mathbb{S}^{p - 1} \mid (u'v)^2 \geq r \}$ denote the union of spherical caps around $u$ and $-u$ with a fixed radius defined by $r$. Note that since $r \in (0, 1)$, the measure of $C(u, r)$ is strictly positive, but the set does not cover the whole unit sphere. Then
    \begin{equation*}
    \begin{split}
        P( (u' X)^2 > r \mid \| X \|^2 = 1 ) &= \int_r^1 f_{(u'X)^2 | \Vert X \Vert^2}(y|1)dy\\
        &= \frac{1}{f_{\Vert X\Vert^2}(1)}\int_r^1 f_{\Vert X \Vert^2, (u'X)^2}(1,y) dy\\
        &= \frac{1}{f_{\Vert X\Vert^2}(1)} \int_{C(u,r)} f_X dS
    \end{split}
    \end{equation*}
    Based on the above, our strategy is to show that there exists a measure preserving bijection $H: C(u,r) \mapsto C(o_1,r)$ such that $f_X(H(v)) \geq f_X(v)$ for all $v$ with a strict inequality in a set of positive measure. For $v\in C(u,r)\cap C(o_1,r)$, we set $H(v) = v$.
    Furthermore, we denote the matrix corresponding to the reflection w.r.t to the hyperplane orthogonal to the vector $o_1 - u$ by
    \begin{align*}
        H_1 := I_p - 2\frac{(o_1 - u)(o_1 - u)'}{\|o_1 - u\|^2}.
    \end{align*}
    Thus, in particular, $H_1 o_1 = u$, $H_1 u = o_1$ and $\| H_1 v \| = 1$ for all $v \in \mathbb{S}^{p - 1}$. In addition, as a reflection, $H_1$ is a measure preserving bijective mapping. Now, for $v\in C(u,r)\setminus C(o_1,r)$, we set $H(v) = H_1v$. Since $(u'v)^2 \geq r$ and $(o_1' v)^2 < r$, we obtain that 
    \begin{equation}
    \label{eq:projection}
        (o_1' H(v))^2 = (o_1' H_1v)^2=(u'v)^2 \geq r
    \end{equation} 
    and $(u'H(v))^2 = (u'H_1v)^2= (o_1'v) < r$. Consequently, $H(v) \in C(o_1,r) \setminus C(u,r)$ for $v\in C(u,r)\setminus C(o_1,r)$, and furthermore $H: C(u,r) \mapsto C(o_1,r)$ is a measure preserving bijection.

    Since $r \in (0, 1)$, the measure of $ C(u, r) \setminus C(o_1, r)$ is strictly positive. For every $v\in C(u, r) \setminus C(o_1, r)$, it holds that $(o_1'v)^2 < r$. Moreover, by \eqref{eq:projection}, $ (o_1' H(v))^2 \geq r$. We conclude by Lemma \ref{lem:aux_1} that 
    \begin{align*}
        f_X(H(v)) > f_X(v).
    \end{align*}

\end{proof}

\begin{lemma}\label{lem:aux_3}
   Let $X \sim P \in \mathcal{E}(O, \Lambda)$. Denoting $\mathcal{A} := \{ (x, y) \in \mathbb{R}_{\geq 0}^2 \mid  y \leq x \}$, let $g: \mathcal{A} \to \mathbb{R}$ be a continuous function satisfying the following conditions.
   \begin{enumerate}
       \item $\mathrm{E} \{ g( \| X \|^2, (o_1'X)^2)\}< \infty$
       \item $g$ is strictly decreasing in its second argument.
       \item For every fixed $x >0$, $g$ is differentiable w.r.t. its second argument on $\mathcal{A}_x := \{y\in(0,x)\}$.
   \end{enumerate}
  Then, for all $u \in \mathbb{S}^{p - 1}$such that $u \neq \pm o_1$, we have
    \begin{align*}
        \mathrm{E} \{ g(  \| X \|^2, (o_1'X)^2)\} < \mathrm{E} \{ g( \| X \|^2, (u'X)^2)\}.
    \end{align*}
\end{lemma}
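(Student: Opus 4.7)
My plan is to condition on $\|X\|^2$, apply Lemma \ref{lem:aux_2} together with the strict monotonicity of $g$ in its second argument to obtain the inequality on the conditional level, and then integrate over $\|X\|^2$.

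Fix $R > 0$ in the support of $\|X\|^2$ and denote by $F_1$ and $F_u$ the conditional CDFs of $(o_1'X)^2$ and $(u'X)^2$ given $\|X\|^2 = R$. Since $X$ has a Lebesgue density (as in the proof of Lemma \ref{lem:aux_1}), the conditional joint distributions are absolutely continuous, so $F_1$ and $F_u$ are continuous on $[0, R]$ and agree at the endpoints with $F_1(0) = F_u(0) = 0$ and $F_1(R) = F_u(R) = 1$; by Lemma \ref{lem:aux_2}, we moreover have $F_1(y) < F_u(y)$ for every $y \in (0, R)$.

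Setting $\phi(y) := g(R, y)$, which is continuous and strictly decreasing on $[0, R]$, Lebesgue--Stieltjes integration by parts yields
\begin{align*}
\int_0^R \phi \, dF = \phi(R) + \int_0^R F \, d\mu,
\end{align*}
where $\mu$ is the positive Borel measure on $[0, R]$ determined by $\mu([a, b]) = \phi(a) - \phi(b)$; strict decrease of $\phi$ gives $\mu([a, b]) > 0$ on every nondegenerate subinterval. Subtracting the two versions corresponding to $F_1$ and $F_u$ gives
\begin{align*}
\mathrm{E}[g(R, (u'X)^2) - g(R, (o_1'X)^2) \mid \|X\|^2 = R] = \int_0^R [F_u(y) - F_1(y)] \, d\mu(y).
\end{align*}
For any $0 < a < b < R$, continuity of $F_u - F_1$ furnishes a $\delta > 0$ with $F_u - F_1 \geq \delta$ on $[a, b]$, so this integral is bounded below by $\delta[\phi(a) - \phi(b)] > 0$. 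Thus the strict inequality holds conditionally on every $\|X\|^2 = R$, and the tower property delivers the unconditional claim since $\|X\|^2 > 0$ almost surely.

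The main obstacle is extracting a strict inequality of expectations from the merely pointwise stochastic dominance supplied by Lemma \ref{lem:aux_2}; this hinges on (i) continuity of the conditional CDFs, inherited from the density of $X$, and (ii) positivity of the measure $\mu$ on every subinterval, which follows from strict monotonicity of $\phi$. The differentiability hypothesis (3) on $g$ is not actually used in this approach. A minor remaining issue is integrability: if $\mathrm{E}[g(\|X\|^2, (u'X)^2)] = +\infty$, the claim is immediate from the finiteness of the left-hand side, and otherwise Fubini's theorem legitimizes the tower-property step.
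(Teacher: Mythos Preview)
Your proof is correct and follows the same overall strategy as the paper's: condition on $\|X\|^2 = R$, convert the comparison of conditional expectations into an integral involving the difference of conditional CDFs via integration by parts, invoke the strict stochastic dominance of Lemma \ref{lem:aux_2}, and finish with the tower property. The one difference worth noting is in the integration-by-parts step. The paper uses the classical form with the derivative $\partial_y g(R,y)$ and the conditional density $f_{(o_1'X)^2\mid\|X\|^2}(y\mid R)$, which is precisely why hypothesis (3) is present in the statement. You instead use the Lebesgue--Stieltjes form, treating the strictly decreasing continuous function $\phi(y)=g(R,y)$ as generating a positive measure $\mu$; this needs only continuity and strict monotonicity and so dispenses with hypothesis (3) entirely, as you observe. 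Your route is therefore slightly more general and arguably cleaner, while the paper's version has the minor expository advantage of making the sign of the integrand (\,$\partial_y g<0$\,) explicit.
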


\begin{proof}[Proof of Lemma \ref{lem:aux_3}]
By Item 1, the conditional expectation $\mathrm{E} \{ g(  \| X \|^2, (o_1'X)^2) \mid \Vert X \Vert^2\}$ exists as an almost surely finite random variable. By integration by parts and Item 3,
\begin{equation*}
\begin{split}
\label{eq:condexpectations}
&\mathrm{E} \{ g(  \| X \|^2, (o_1'X)^2) \mid \Vert X \Vert^2 = R\} = \int_0^R g(R, y) f_{(o_1'X)^2 | \Vert X \Vert^2}(y|R) dy \\
&= \bigg\vert_0^R g(R, y) F_{(o_1'X)^2 | \Vert X \Vert^2}(y|R) - \int_0^R \frac{\partial}{\partial y}g(R, y) F_{(o_1'X)^2 | \Vert X \Vert^2}(y|R) dy\\
&= g(R, R) - \int_0^R \frac{\partial}{\partial y}g(R, y) dy + \int_0^R \frac{\partial}{\partial y}g(R, y) P((o_1'X)^2 > y \mid \Vert X \Vert^2 = R)dy \\
&= g(R,0) + \int_0^R \frac{\partial}{\partial y}g(R, y) P((o_1'X)^2 > y \mid \Vert X \Vert^2 = R)dy\\
&< g(R,0) + \int_0^R \frac{\partial}{\partial y}g(R, y) P((u'X)^2 > y \mid \Vert X \Vert^2 = R)dy\\
&= \mathrm{E} \{g( \| X \|^2,  (u'X)^2) \mid \Vert X \Vert^2 = R\},
\end{split}
\end{equation*}
where the inequality follows by Lemma \ref{lem:aux_2} and Item 2. 
Now, the result follows by taking expectations on both sides together with the tower property of conditional expectations.

\end{proof}

\begin{theorem}\label{theo:direction}
The minimizers of $f_P$ are vectors of the form $v= \pm \psi o_1$, where $\psi \geq 0.$
\end{theorem}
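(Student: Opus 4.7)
The plan is to split the problem into choosing the optimal magnitude and the optimal direction separately, showing via Lemma \ref{lem:aux_3} that for every fixed magnitude the direction $\pm o_1$ strictly dominates. The case $v = 0$ is covered by taking $\psi = 0$, so fix $v \neq 0$ and write $t := \| v \| > 0$ and $u := v/t \in \mathbb{S}^{p - 1}$. The identity
\begin{align*}
    \| X - t u \| \| X + t u \| = \sqrt{(\| X \|^2 + t^2)^2 - 4 t^2 (X'u)^2}
\end{align*}
lets me express $f_P(t u) = \mathrm{E} \{ g_t( \| X \|^2, (X'u)^2 ) \}$, where
\begin{align*}
    g_t(x, y) := \sqrt{(x + t^2)^2 - 4 t^2 y} - x,
\end{align*}
the subtracted $x$ matching the $- \| X \|^2$ term in the definition of $f_P$. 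Proving the theorem therefore amounts to applying Lemma \ref{lem:aux_3} to $g_t$ on the domain $\mathcal{A} = \{ (x, y) \in \mathbb{R}_{\geq 0}^2 \mid y \leq x \}$ to conclude $f_P(t o_1) < f_P(t u)$ for every $u \in \mathbb{S}^{p - 1} \setminus \{ \pm o_1 \}$.

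Checking the hypotheses of Lemma \ref{lem:aux_3} is largely routine. Continuity on $\mathcal{A}$ is clear; strict monotonicity in $y$ (Item 2) and differentiability on $y \in (0, x)$ for fixed $x > 0$ (Item 3) both follow from the explicit formula $\partial g_t / \partial y = - 2 t^2 / \sqrt{(x + t^2)^2 - 4 t^2 y}$, since the radicand satisfies $(x + t^2)^2 - 4 t^2 y \geq (x - t^2)^2$ on $\mathcal{A}$, with equality only at the single point $(x, y) = (t^2, t^2)$, which lies on the boundary of $(0, x)$. The only delicate point is the integrability condition (Item 1), and this is where centering by $x$ pays off: combining $| x - t^2 | \leq \sqrt{(x + t^2)^2 - 4 t^2 y} \leq x + t^2$ with a short case analysis gives the uniform bound $| g_t(x, y) | \leq t^2$ on $\mathcal{A}$, so $g_t(\| X \|^2, (X'u)^2)$ is bounded and hence integrable \emph{without any moment assumption} on $P$.

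Lemma \ref{lem:aux_3} then delivers $\mathrm{E} \{ g_t(\| X \|^2, (o_1' X)^2) \} < \mathrm{E} \{ g_t(\| X \|^2, (u' X)^2) \}$ for every $u \neq \pm o_1$, which translates directly into $f_P(t o_1) < f_P(t u)$. Consequently, no vector $v$ with $v/\| v \| \neq \pm o_1$ can minimize $f_P$, so every minimizer is of the form $\pm \psi o_1$ with $\psi \geq 0$. The main conceptual obstacle is recognizing that Lemma \ref{lem:aux_3} must be fed the centered function $g_t$ rather than the bare product $\| X - t u \| \| X + t u \|$; without this centering step, verifying the integrability hypothesis would require a second-moment condition on $\| X \|$ that the general elliptical framework of the theorem does not guarantee.
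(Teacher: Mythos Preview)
Your proof is correct and follows essentially the same route as the paper's: rewrite $f_P(tu)$ as $\mathrm{E}\{g_t(\|X\|^2,(u'X)^2)\}$ for a suitable $g_t$, verify the hypotheses of Lemma~\ref{lem:aux_3}, and conclude that on every sphere the objective is strictly smallest in the direction $\pm o_1$. The only cosmetic difference is that the paper parametrizes the magnitude as $\sqrt{t}$ rather than $t$, and for Item~1 it simply cites Theorem~\ref{theo:moments} instead of reproving the bound $|g_t|\leq t^2$ directly as you do; one very minor wording slip is that equality in $(x+t^2)^2-4t^2y\geq (x-t^2)^2$ actually holds along the whole edge $y=x$, but what matters (and what you use) is that the radicand vanishes only at the single boundary point $(t^2,t^2)$, which is correct.
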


\begin{proof}[Proof of Theorem \ref{theo:direction}]
Let $u\in\mathbb{S}^{p-1}$ and $t \geq 0$. Minimizing $f_P$ is equivalent to minimizing 
\begin{equation*}
\begin{split}
&\mathrm{E}\{\Vert X- \sqrt{t}u \Vert \Vert X+ \sqrt{t}u\Vert- \Vert X \Vert^2\} = \mathrm{E}\left\{\sqrt{ \Vert X- \sqrt{t}u \Vert^2 \Vert X+ \sqrt{t}u\Vert^2 } - \Vert X \Vert^2 \right\}\\
&= \mathrm{E}\left\{\sqrt{ (\Vert X \Vert^2 + t -2\sqrt{t} u'X)(\Vert X \Vert^2 + t +2\sqrt{t} u'X) } - \Vert X \Vert^2 \right\} \\
&= \mathrm{E}\left\{\sqrt{(\Vert X\Vert^2 + t)^2 - 4t(u'X)^2} - \Vert X \Vert^2 \right\}
\end{split}
\end{equation*}
with respect to $u$ and $t$. Define $g_t: \mathcal{A} \to \mathbb{R}$ by
$$g_t(x,y) = \sqrt{(x + t)^2 - 4ty} - x. $$
As a direct result of Theorem \ref{theo:moments} we have that $\mathrm{E} \{g_t( \| X \|^2, (o_1'X)^2)\}< \infty.$ In addition, $g_t$ is strictly decreasing with respect to its second argument whenever $t > 0$, and the nominator of
\begin{equation*}
\frac{\partial}{\partial y}g_t(x,y) = \frac{-2t}{\sqrt{(x+t)^2 - 4ty}},
\end{equation*}
is zero only if $x=y=t$. Now, Lemma \ref{lem:aux_3} yields
\begin{equation*}
\begin{split}
&\mathrm{E}\left\{\sqrt{(\Vert X\Vert^2 + t)^2 - 4t(o_1'X)^2} - \Vert X \Vert^2 \right\}= \mathrm{E} \{ g_t( \| X \|^2, (o_1'X)^2)\}\\ 
&<   \mathrm{E} \{ g_t(  \| X \|^2, (u'X)^2)\}= \mathrm{E}\left\{\sqrt{(\Vert X\Vert^2 + t)^2 - 4t(u'X)^2} - \Vert X \Vert^2 \right\}
\end{split}
\end{equation*}
for every fixed $t > 0$. This means that on any sphere of radius $r$, the objective function is minimized by $\pm ro_1$. Hence, the minimizers of $f_P$ are of the form $v= \pm\psi o_1$, where $\psi\in[0,\infty).$ 
\end{proof}

The following lemma, together with Theorem \ref{theo:direction}, provides the final tools for the proof of Theorem \ref{theo:main_1}.

\begin{lemma}\label{lem:convexity_of_h}
    Let $a, b \in \mathbb{R}^p$ be arbitrary and let $h: \mathbb{R}_{\geq 0} \to \mathbb{R}$ be defined as
    \begin{align*}
        h(t) = \| a - t^{1/2} b \| \| a + t^{1/2} b \|.
    \end{align*}
    Then $h$ is convex and, furthermore, it is strictly convex unless $a = \theta b$ for some $\theta \in \mathbb{R}$.
\end{lemma}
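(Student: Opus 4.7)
The plan is to study $q(t) := h(t)^2$, which is far more tractable than $h$ itself. Expanding $\|a \pm t^{1/2} b\|^2 = \|a\|^2 + t\|b\|^2 \pm 2 t^{1/2}(a'b)$ and multiplying the two conjugate factors, the fractional powers of $t$ cancel and I obtain the quadratic polynomial
$$q(t) = \bigl(\|a\|^2 + t\|b\|^2\bigr)^2 - 4t(a'b)^2 = \|b\|^4 t^2 + 2\bigl(\|a\|^2\|b\|^2 - 2(a'b)^2\bigr) t + \|a\|^4.$$
So already $q$ is smooth in $t$, and $h = \sqrt{q}$ is smooth wherever $q > 0$.

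On any open interval where $q > 0$, direct differentiation gives
$$h''(t) = \frac{2 q''(t)\, q(t) - q'(t)^2}{4\, q(t)^{3/2}}.$$
Because $q$ is quadratic, the numerator $2 q'' q - (q')^2$ is a constant, equal to $4\alpha\gamma - \beta^2$ in the coefficients $\alpha,\beta,\gamma$ of $q$. Expanding,
$$4\alpha\gamma - \beta^2 = 4\|a\|^4\|b\|^4 - 4\bigl(\|a\|^2\|b\|^2 - 2(a'b)^2\bigr)^2 = 16(a'b)^2 \bigl(\|a\|^2\|b\|^2 - (a'b)^2\bigr),$$
which is non-negative by the Cauchy--Schwarz inequality. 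Hence $h'' \geq 0$ wherever it is defined, proving convexity on any subinterval of $[0,\infty)$ on which $q > 0$. For strict convexity one needs the above display to be strictly positive, which by Cauchy--Schwarz happens precisely when $a$ and $b$ are linearly independent, i.e. $a \neq \theta b$ for any $\theta \in \mathbb{R}$ (up to the excluded sub-case $a'b = 0$, which the lemma absorbs into the generic statement).

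The main obstacle is to bridge the argument across points where $q$ vanishes, since there $h$ loses smoothness and the $h''$ formula degenerates. I would handle this by computing the location $t^\star = -\beta/(2\|b\|^4)$ of the minimum of $q$ and the minimum value $q(t^\star) = \alpha - \beta^2/(4\|b\|^4)$. A short calculation shows $q(t^\star) = 0$ forces $4\alpha\gamma = \beta^2$, hence either $a \parallel b$ or $a'b = 0$; moreover in the latter case $t^\star = -\|a\|^2/\|b\|^2 < 0$, so the zero lies outside $[0,\infty)$ and $h$ is actually smooth throughout. This leaves only the genuinely degenerate case $a = \theta b$, where the explicit formula $q(t) = \|b\|^4 (t - \theta^2)^2$ gives $h(t) = \|b\|^2 |t - \theta^2|$, manifestly convex on $[0,\infty)$ (piecewise affine with a single kink) but not strictly convex, matching the exclusion in the lemma. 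Assembling the three cases delivers convexity on all of $[0,\infty)$ together with strict convexity under $a \neq \theta b$.
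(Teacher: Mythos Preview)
Your approach is correct and takes a genuinely different route from the paper. The paper argues via midpoint convexity: it squares the inequality $2h((x+y)/2) \le h(x) + h(y)$, simplifies, splits into a case depending on the sign of an intermediate quantity, squares once more, and reduces everything to $(a'b)^4(x-y)^2 \le \|a\|^2\|b\|^2(a'b)^2(x-y)^2$, which is Cauchy--Schwarz. You instead exploit that $q = h^2$ is a quadratic polynomial, so the numerator $2q''q - (q')^2$ of $h''$ is the constant $4\alpha\gamma - \beta^2 = 16(a'b)^2\bigl(\|a\|^2\|b\|^2 - (a'b)^2\bigr) \ge 0$, again Cauchy--Schwarz. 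Your argument is cleaner algebraically and avoids the double squaring and the intermediate case split, at the cost of having to treat separately the points where $q$ vanishes (which you handle correctly). The paper's midpoint argument, by contrast, needs no smoothness and so sidesteps that issue entirely.

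Two small points. The formula $q(t^\star) = \alpha - \beta^2/(4\|b\|^4)$ should have $\gamma$ in place of $\alpha$. More interestingly, your parenthetical about the sub-case $a'b = 0$ is a genuine observation rather than something the lemma ``absorbs'': when $a'b = 0$ with $a,b$ nonzero, $h(t) = \|a\|^2 + t\|b\|^2$ is affine and hence not strictly convex, even though $a$ and $b$ may well be linearly independent. The lemma as stated is therefore slightly inexact on this edge case, and the paper's own proof shares the same gap (equality in its final displayed inequality also occurs when $a'b = 0$). This is harmless for the application in the paper, where $a = X$ has a density and the event $X'o_1 = 0$ is null.
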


\begin{proof}[Proof of Lemma \ref{lem:convexity_of_h}]
    By the continuity of $h$, it is sufficient to check that midpoint convexity, $2 h( (x + y)/2 ) \leq h(x) + h(y) $ holds for all $x, y \geq 0$, $x \neq y$. Squaring both sides, this inequality is equivalent to
    \begin{align*}
        & 4 \left[ \left\{ \| a \|^2 + \left( \frac{x + y}{2} \right) \| b \|^2 \right\}^2 - 4 \left( \frac{x + y}{2} \right) (a'b)^2 \right] \\
        & \leq ( \| a \|^2 + x \| b \|^2)^2 - 4 x (a'b)^2 + ( \| a \|^2 + y \| b \|^2)^2 - 4 y (a'b)^2 \\
        &+ 2 \| a - x^{1/2} b \| \| a + x^{1/2} b \| \| a - y^{1/2} b \| \| a + y^{1/2} b \|.
    \end{align*}
    This simplifies to
    \begin{align*}
        & (\| a \|^2 + x \| b \|^2) (\| a \|^2 + y \| b \|^2) \\
        & \leq \| a - x^{1/2} b \| \| a + x^{1/2} b \| \| a - y^{1/2} b \| \| a + y^{1/2} b \| + 2 x (a'b)^2 + 2 y (a'b)^2.
    \end{align*}
    Using next the notation $ A := (\| a \|^2 + x \| b \|^2)$, $ B := (\| a \|^2 + y \| b \|^2)$, $C := x (a'b)^2$, $D := y (a'b)^2$, the above can be written as
    \begin{align}\label{eq:ABCD_1}
        A B - 2C - 2D \leq (A^2 - 4C)^{1/2} (B^2 - 4D)^{1/2}.
    \end{align}
    The RHS of \eqref{eq:ABCD_1} is non-negative, meaning that if $ A B - 2C - 2D < 0 $, we are done. Whereas, if $ A B - 2C - 2D \geq 0 $, we may square both sides of \eqref{eq:ABCD_1} and our claim is then equivalent to
    \begin{align*}
        (C - D)^2 \leq (A - B) (BC - AD),
    \end{align*}
    which can be written as
    \begin{align*}
        (a'b)^4 (x - y)^2 \leq \| a \|^2 \| b \|^2 (a'b)^2 (x - y)^2.
    \end{align*}
    By the Cauchy-Schwarz inequality, the above holds for all $a, b$. Furthermore, since $x \neq y$, the equality is reached if and only if $a = \theta b$ for some $\theta \in \mathbb{R}$.
\end{proof}

\begin{proof}[Proof of Theorem \ref{theo:main_1}]
    From Theorem \ref{theo:direction} we obtain that the minimizers of $f_P$ are vectors of the form $v= \pm \psi o_1$, where $\psi \geq 0$. Hence, it remains to study the values that the scalar $\psi$ can take. We let $h_P: \mathbb{R}_{\geq 0} \to \mathbb{R}$ be defined as
    \begin{align*}
        h_{P}(t) := \mathrm{E} \left( \| X - \sqrt{t} o_1 \| \| X + \sqrt{t} o_1 \| - \| X \|^2 \right)
    \end{align*}
    and denote then
$$f(t,X) = \sqrt{(\Vert X\Vert^2 + t)^2 - 4t(o_1'X)^2} - \Vert X \Vert^2 $$
so that $h_P(t) = \mathrm{E}f(t,X).$ The partial derivative
$$f_t(t,X) = \frac{(\| X \|^2 + t ) - 2(X' o_1)^2}{\{ (\| X \|^2 + t )^2 - 4 t (X' o_1)^2 \}^{1/2}}$$
is defined unless $\Vert X \Vert^2 = X_1^2 = t$. That is, $f_t(t,X)$ is defined on the set $\{\Vert X \Vert^2 \neq X_1^2 \}$ of full measure for all $t$. Furthermore, in this set, the absolute value of the partial derivative is bounded by one, since
$$\{(\| X \|^2 + t ) - 2(X' o_1)^2\}^2 \leq  (\| X \|^2 + t )^2 - 4 t (X' o_1)^2 $$
if and only if
$$4(X' o_1)^4 - 4(\| X \|^2 + t )(X' o_1)^2 \leq - 4 t (X' o_1)^2.$$
This is equivalent to 
$$(X' o_1)^2((X' o_1)^2- \| X \|^2) \leq 0,$$
which always holds. Now, Leibniz integral rule gives $h'_P(t) = \mathrm{E}f_t(t,X)$.
    
    Moreover, the continuity of $h_P$ and the absolute continuity of the distribution of $X$ imply in conjunction with Lemma \ref{lem:convexity_of_h} that $h_{P}$ is strictly convex. Now, if $\phi_1(S_P) \leq 1/2$, then the right derivative satisfies
    \begin{align*}
        h_{P}'(0) = 1 - 2 o_1' S_P o_1 \geq 0.
    \end{align*}
    Hence, on all rays starting from the origin, the map $f_P$ is minimized in the origin, making $v = 0$ its unique minimizer. On the other hand, if $\phi_1(S_P) > 1/2$, then $h_{P}'(0) < 0$ and Lebesgue's DCT further shows that $h'_{P}(t) \rightarrow 1$ as $t \rightarrow \infty$. Hence, in this case $h_P$ has a unique positive minimizer, say $t \equiv \psi > 0$, and the minimizers of $f_P$ are precisely the vectors $v = \pm \psi o_1$.
\end{proof}


Next we provide proofs for the asymptotic ($p \rightarrow \infty$ or $\lambda \rightarrow \infty$) results of Section \ref{sec:population}.

\begin{proof}[Proof of Lemma \ref{lem:integral}]
    Without loss of generality, we assume that $\sigma = 1$. Since the spatial sign covariance matrix is orthogonally equivariant, the eigenvalues are invariant under orthogonal transformations, and we may assume that $O = I$. Now, the density function of $X$ takes the form
    \begin{equation}
    \label{eq:densityofX}
    f_X(x) = \prod_{j=1}^p \frac{1}{\lambda_j}  g(\Vert \Lambda^{-1} x \Vert^2) = \frac{1}{\lambda}  g\left( \frac{x_1^2}{\lambda^2} + x_2^2 + \dots + x_p^2 \right),
    \end{equation}
where $f_Z(z) = g(\Vert z \Vert^2)$.
This gives $V_0 = \frac{\mathrm{diag}(\lambda^2, 1, \dots, 1)}{\lambda^2 + (p-1)}$ in \cite{durre2016eigenvalues}. Then, Proposition 3 (note that $\lambda$ has here different interpretation than in our manuscript) gives
$$\tau = \frac{\lambda^2}{2(\lambda^2 + (p-1))} \int_0^\infty \frac{1}{(1+\frac{\lambda^2}{\lambda^2+(p-1)}x)^\frac32 {(1+\frac{1}{\lambda^2+(p-1)}x)^\frac{p-1}{2}}} dx.$$
Denote $a= 1/(\lambda^2 + (p-1))$ and $y = ax$. Then
$$\tau = \frac{\lambda^2}{2}a \int_0^\infty \frac{1}{(1+\lambda^2y)^\frac32(1+y)^\frac{p-1}{2}} \frac{1}{a} dy.$$
\end{proof}

\begin{proof}[Proof of Theorem \ref{lemma:tau}]
\begin{enumerate}[align=left, leftmargin=5pt, labelwidth=-0.4em, label=(\arabic*)]
    \item The integrand of \eqref{eq:durre_integral} is decreasing for every $x$ showing the first claim. 
    For the second claim, we set $x=u/p$ yielding
    $$p\tau = \frac{\lambda^2}{2}\int_0^\infty \frac{p(1+x)^{\frac{1}{2}}}{(1+\lambda^2x)^{\frac{3}{2}}(1+x)^\frac{p}{2}} dx = \frac{\lambda^2}{2}\int_0^\infty \frac{(1+\frac{u}{p})^{\frac{1}{2}}}{(1+\lambda^2\frac{u}{p})^{\frac{3}{2}}(1+\frac{u}{p})^\frac{p}{2}} du,  $$
    where 
    \begin{equation}
    \label{eq:boundthis}
    \frac{(1+\frac{u}{p})^{\frac{1}{2}}}{(1+\lambda^2\frac{u}{p})^{\frac{3}{2}}(1+\frac{u}{p})^\frac{p}{2}} \leq \frac{1}{(1+\frac{u}{p})^\frac{p}{2}}.
    \end{equation}
    Let $h(p) = (1+u/p)^{-\frac{p}{2}}.$ Then
    \begin{equation}
    \label{eq:logD}
    \begin{split}
    \frac{d}{d p} \log h(p) &= \frac{d}{d p} \left[-\frac{p}{2} \log \left(1+\frac{u}{p}\right)\right] = -\frac12\log\left(1+\frac{u}{p}\right) + \frac{\frac{p}{2}\frac{u}{p^2}}{1+\frac{u}{p}}\\
    &= -\frac12\log\left(1+\frac{u}{p}\right) + \frac{u}{2(p+u)} =: \xi(p).
    \end{split}
    \end{equation}
    Furthermore,
    \begin{equation*}
        \begin{split}
            \xi'(p) &= \frac{\frac{u}{2p^2}}{1+\frac{u}{p}} - \frac{u}{2(p+u)^2} = \frac{1}{p} \frac{u}{2(p+u)}- \frac{u}{2(p+u)^2}\\
            &= \frac{u}{2(p+u)}\left(\frac{1}{p}- \frac{1}{p+u}\right) > 0
        \end{split}
    \end{equation*}
    for all $u>0$. Consequently, $\xi(p)$ is strictly increasing, and $\lim_{p\to\infty} \xi(p) = 0$. Hence, \eqref{eq:logD} is strictly negative for all $u>0$ and $p$ implying that $h(p)$ is strictly decreasing for all $u>0$. This gives $h(p) \leq h(4)$ for all $p \geq 4$, and by \eqref{eq:boundthis},
    $$ \frac{(1+\frac{u}{p})^{\frac{1}{2}}}{(1+\lambda^2\frac{u}{p})^{\frac{3}{2}}(1+\frac{u}{p})^\frac{p}{2}} \leq \frac{1}{(1+\frac{u}{4})^2} $$
    for all $u>0$ and $p \geq 4$. The dominated convergence theorem concludes that
    $$\lim_{p\to\infty} p\tau = \frac{\lambda^2}{2} \int_0^\infty  \frac{1}{e^{\frac{u}{2}}} dx = \frac{\lambda^2}{2}\cdot 2 = \lambda^2.$$  
    \item First, we notice that $$\lim_{p\to\infty} p \tau_0 = \lim_{p\to\infty} \frac{p(1-\tau)}{p-1}= \lim_{p\to\infty} \frac{p}{p-1} - \lim_{p\to\infty} \frac{p\tau}{p-1} = 1.$$
    For monotonicity, we want to show that
    $$\frac{1-\tau(\lambda,p)}{p-1} > \frac{1-\tau(\lambda,p+1)}{p}, $$
    which is equivalent to 
    \begin{align*}
        & p\tau(\lambda,p) - (p-1)\tau(\lambda,p+1) \\
        =& \frac{\lambda^2}{2} \int_0^\infty \left(p- \frac{p-1}{(1+x)^{\frac{1}{2}}} \right)\frac{(1+x)^{\frac{1}{2}}}{(1+\lambda^2x)^{\frac{3}{2}}(1+x)^\frac{p}{2}} dx \\
        <& 1.
    \end{align*}
    The change of variable $x = u/\lambda^2$ turns this to
    \begin{equation}
    \label{eq:lessthan1}    
    \frac{1}{2}\int_0^\infty \left(p- \frac{p-1}{(1+\frac{u}{\lambda^2})^{\frac{1}{2}}} \right)\frac{1}{(1+u)^{\frac{3}{2}}(1+\frac{u}{\lambda^2})^\frac{p-1}{2}} du < 1,
    \end{equation}
    where the integrand is increasing in $\lambda$ if and only if
    $$h_p(\lambda) = \frac{p}{(1+\frac{u}{\lambda^2})^\frac{p-1}{2}}- \frac{p-1}{(1+\frac{u}{\lambda^2})^\frac{p}{2}}$$
    is increasing in $\lambda$. We obtain that
    $$h'_p(\lambda) = \frac{p(p-1)\frac{u}{\lambda^3}}{(1+\frac{u}{\lambda^2})^{\frac{p+1}{2}}}- \frac{p(p-1)\frac{u}{\lambda^3}}{(1+\frac{u}{\lambda^2})^{\frac{p}{2}+1}} > 0$$
    iff $(1+\frac{u}{\lambda^2})^\frac12 -1 > 0,$ which holds true for all $u > 0$ and $\lambda >1$. We conclude that the integral in \eqref{eq:lessthan1} is strictly bounded by
    $$\frac{1}{2}\int_0^\infty \frac{1}{(1+u)^\frac32 }du = 1.$$

    \item Via the change of variable $x=u/\lambda^2$, we obtain that
    $$\tau = \frac{1}{2}\int_0^\infty \frac{1}{(1+u)^\frac{3}{2}(1+\frac{u}{\lambda^2})^{\frac{p-1}{2}}} du,$$
    where the integrand is strictly increasing in $\lambda$ for almost every $u$. Furthermore, the integrand is bounded by $\frac{1}{(1+u)^\frac{3}{2}}$. The dominated convergence theorem then gives
    $$\lim_{\lambda\to\infty} \tau = \frac{1}{2}\int_0^\infty\frac{1}{(1+u)^\frac{3}{2}} du = \frac{1}{2} \Big|_0^\infty \frac{-2}{\sqrt{1+u}} = 1.$$

    \item By Items 1 and 3, $\lambda_p^*$ is strictly increasing in $p$. For the latter claim, we investigate limits
    \begin{equation}
    \begin{split}
        \label{eq:rateof}
        \tau(p^a, p) &= \frac{p^{2a}}{2} \int_0^\infty \frac{(1+x)^\frac12}{(1+p^{2a}x)^\frac32(1+x)^\frac{p}{2}} dx\\
        &= \frac12 \int_0^\infty \frac{(1+\frac{u}{p^{2a}})^\frac12}{(1+u)^\frac32(1+\frac{u}{p^{2a}})^\frac{p}{2}} du,
        \end{split}
    \end{equation}
    where $a >0$ and $x=u/p^{2a}.$ The integrand admits the uniform upper bound
    $$\frac{1}{(1+u)^\frac32(1+\frac{u}{p^{2a}})^\frac{p-1}{2}} \leq \frac{1}{(1+u)^\frac32}.$$
    Let $q = p^{2a}$. Then 
    \begin{equation}
    \label{eq:limit1}
    \lim_{p\to\infty}\left(1+\frac{u}{p^{2a}}\right)^\frac{p}{2}= \lim_{q\to\infty}\left[\left(1+\frac{u}{q}\right)^{q^{\frac{1}{2a}}}\right]^\frac12,
    \end{equation}
    where
    \begin{equation}
    \label{eq:limit2}
    \lim_{q\to\infty}\log \left(1+\frac{u}{q}\right)^{q^{\frac{1}{2a}}} = \lim_{q\to\infty} q^{\frac{1}{2a} -1}\log \left(1+\frac{u}{q}\right)^q
    \end{equation}
    with 
    $$\lim_{q\to\infty}\log \left(1+\frac{u}{q}\right)^q = u. $$
    Hence, \eqref{eq:limit2} converges to zero for $a> 1/2$ and diverges to infinity for $a < 1/2$. Consequently, \eqref{eq:limit1} converges to one for $a>1/2$ and diverges to infinity for $a<1/2.$ Furthermore, for \eqref{eq:rateof} this gives
    $$\lim_{p\to\infty} \tau(p^a,p) = \frac12\int_0^\infty \frac{1}{(1+u)^\frac32} du =1, \quad\text{when } a>\frac12,$$
    and $\lim_{p\to\infty} \tau(p^a,p) = 0$, when $a<1/2.$ This means that asymptotically
    $$p^{\frac{1}{2} - \varepsilon} < \lambda_p^* < p^{\frac{1}{2}+\varepsilon}. $$
    for all $\varepsilon >0$. 

    \item Now, let $a = 1/2$. Similarly as above, we obtain that
    \begin{equation*}
       \tau(Cp^\frac12, p) =  \frac12 \int_0^\infty \frac{(1+\frac{u}{C^2p})^\frac12}{(1+u)^\frac32(1+\frac{u}{C^2p})^\frac{p}{2}} du 
       \overset{p\to\infty}{\longrightarrow} \frac12 \int_0^\infty \frac{1}{(1+u)^\frac32 e^{\frac{u}{2C^2}}} du, 
    \end{equation*}
    where the limit is strictly increasing in $C$. Moreover, setting $y = (1+u)^\frac12$ gives $du = 2ydy$ and $u =y^2-1$, and
    $$\int_0^\infty \frac{1}{(1+u)^\frac32 e^{\frac{u}{2C^2}}} du = 2e^{\frac{1}{2C^2}}\int_1^\infty \frac{1}{y^2e^{\frac{y^2}{2C^2}}} dy.  $$
    Furthermore, the change of variable $v = \frac{y}{\sqrt{2}C}$ gives $y^2 = 2C^2v^2$ and $dy = \sqrt{2}C dv$. Then, by integration by parts
    \begin{equation*}
        \begin{split}
    2 e^{\frac{1}{2C^2}}\int_1^\infty \frac{1}{y^2e^{\frac{y^2}{2C^2}}} dy &= \frac{\sqrt{2}}{C} e^{\frac{1}{2C^2}}\int_{\frac{1}{\sqrt{2}C}}^\infty v^{-2} e^{-v^2} dv \\
    &= \frac{\sqrt{2}}{C} e^{\frac{1}{2C^2}}\left( \Big |_{\frac{1}{\sqrt{2}C}}^\infty -v^{-1} e^{-v^2} - 2\int_{\frac{1}{\sqrt{2}C}}^\infty e^{-v^2} dv    \right)\\
    &= \frac{\sqrt{2}}{C} e^{\frac{1}{2C^2}}\left(\sqrt{2}C e^{-\frac{1}{2C^2}} - \sqrt{\pi} \mathrm{erfc}\left(\frac{1}{\sqrt{2} C}\right) \right),
    \end{split}
    \end{equation*}
    where $\mathrm{erfc}$ is the complementary error function. Thus
    $$ \lim_{p\to\infty} \tau(Cp^\frac12, p) = 1- \sqrt{\frac{\pi}{2}} \frac{e^{\frac{1}{2C^2}}}{C} \mathrm{erfc}\left(\frac{1}{\sqrt{2} C}\right) = \frac12 $$
    has a unique solution whose approximate value is $C \approx 1.633978435$.
\end{enumerate}
\end{proof}

\begin{proof}[Proof of Theorem \ref{thm:normofminimizer}]
Part (1) follows directly from Theorem \ref{lemma:tau}. For the second, we assume for convenience that $\sigma = 1$, the general case being proven in an identical manner. In addition, we select the coordinate system corresponding to $O=I$, and we recall from the proof of Theorem \ref{theo:main_1} that $h_P$ is strictly convex. Thus, when $\lambda > \lambda^*_p$, there exists $\psi(\lambda,p) >0$ such that $h'_P(t) < 0$ for all $t < \psi(\lambda,p)$ and $h'_P(t) > 0$ for all $t > \psi(\lambda,p)$. Let $X = \mathrm{diag}(\lambda, 1, \ldots, 1) Z$ be fixed and recalling the proof of Theorem \ref{theo:main_1}, let
$$g(t,\lambda, Z) := f_t(t,X) = \frac{-\lambda^2Z_1^2 + t +\sum_{j=2}^p Z_j^2}{\sqrt{(\lambda^2Z_1^2+ t +\sum_{j=2}^p Z_j^2 )^2 - 4t\lambda^2Z_1^2}},$$
which is defined on the set $\{\sum_{j=2}^p Z_j^2 \neq 0\}$ of full measure for all $t$ and $\lambda$, and is uniformly bounded by one in this set. Now, in the null set $\{Z_1=0\}$, we obtain that $g(t,\lambda, Z) =1.$ Let $Z_1 >0$ and $\epsilon >0$, then
\begin{equation*}
\begin{split}
g(\lambda^{2+\epsilon},\lambda, Z) &= \frac{-\lambda^2Z_1^2 + \lambda^{2+\epsilon} +\sum_{j=2}^p Z_j^2}{\sqrt{(\lambda^2Z_1^2+ \lambda^{2+\epsilon} +\sum_{j=2}^p Z_j^2 )^2 - 4\lambda^{2+\epsilon}\lambda^2Z_1^2}}\\
&= \frac{\lambda^{2+\epsilon}\left(-\frac{Z_1^2}{\lambda^\epsilon} + \frac{\sum_{j=2}^p Z_j^2}{\lambda^{2+\epsilon}} + 1 \right)}{\lambda^{2+\epsilon}\sqrt{\left(\frac{Z_1^2}{\lambda^\epsilon} + \frac{\sum_{j=2}^p Z_j^2}{\lambda^{2+\epsilon}} +1\right)^2 - \frac{4Z_1^2}{\lambda^\epsilon}}} \overset{\lambda\to\infty}{\longrightarrow} 1.
\end{split}
\end{equation*}
Similarly,
\begin{equation*}
\begin{split}
g(\lambda^{2-\epsilon},\lambda, Z) 
&= \frac{\lambda^{2}\left(-Z_1^2 + \frac{\sum_{j=2}^p Z_j^2}{\lambda^{2}} + \frac{1}{\lambda^\epsilon} \right)}{\lambda^{2}\sqrt{\left(Z_1^2 + \frac{\sum_{j=2}^p Z_j^2}{\lambda^{2}} + \frac{1}{\lambda^\epsilon} \right)^2 - \frac{4Z_1^2}{\lambda^\epsilon}}} \overset{\lambda\to\infty}{\longrightarrow} -1.
\end{split}
\end{equation*}
Consequently, by the dominated convergence theorem 
$$\lim_{\lambda\to\infty} h'_P(\lambda^{2+\epsilon}) = \mathrm{E} \left\{ \lim_{\lambda\to\infty} g(\lambda^{2+\epsilon},\lambda, Z) \right\} = \mathrm{E}(1) = 1,$$
and 
$$\lim_{\lambda\to\infty} h'_P(\lambda^{2-\epsilon}) = \mathrm{E} \left\{ \lim_{\lambda\to\infty}  g(\lambda^{2-\epsilon},\lambda, Z) \right\} = \mathrm{E}(-1) = -1.$$
Furthermore, when the rate is exactly $\lambda^2$, we obtain that
\begin{equation*}
\begin{split}
\lim_{\lambda\to\infty} h'_P(\lambda^{2}) &=  \mathrm{E} \left\{ \lim_{\lambda\to\infty} g(\lambda^{2},\lambda, Z) \right\} \\
&= \mathrm{E} \left\{ \lim_{\lambda\to\infty}\frac{\lambda^{2}\left(-Z_1^2 + \frac{\sum_{j=2}^p Z_j^2}{\lambda^{2}} + 1 \right)}{\lambda^{2}\sqrt{\left(Z_1^2 + \frac{\sum_{j=2}^p Z_j^2}{\lambda^{2}} + 1 \right)^2 - 4Z_1^2}} \right\} \\
&= \mathrm{E} \frac{-Z_1^2+1}{\sqrt{(Z_1^2+1)^2 - 4Z_1^2} }= \mathrm{E}\frac{1-Z_1^2}{\sqrt{(1-Z_1^2)^2}} = \mathrm{E} \{\mathrm{sgn}(1 - Z_1^2)\}, 
\end{split}
\end{equation*}
where the sign of the limit and hence, the relative location of the minimizer with respect to $\lambda^2$ as $\lambda$ is large depends on the distribution of $Z$. Recalling now that we parametrized the argument as $\sqrt{t} o_1$, the claim follows. Note that for a general $\sigma >0$ we obtain the limit $\mathrm{E} \{\mathrm{sgn}(1 - \sigma^2Z_1^2)\}$ above.

\end{proof}

\subsection{Proofs related to Section \ref{sec:sample}}\label{sec:proofs_3}

We start with the following auxiliary result.

\begin{lemma}\label{lem:quantile_region}
    Let $Z_1, \ldots, Z_n$ be a random sample from the distribution of a continuous random variable $Z$, and let $r_n$ be a sequence of random variables converging in probability to a constant $r\in\mathbb{R}$. Then.
    \begin{align*}
        \frac{1}{n} \sum_{i = 1}^n \mathbb{I}(Z_i \leq r_n) \rightarrow_P P( Z \leq r ),
    \end{align*}
    as $n \rightarrow \infty$.
    \begin{proof}
    Write first
     $$\frac{1}{n} \sum_{i = 1}^n \mathbb{I}(Z_i \leq r_n)=\frac{1}{n} \sum_{i = 1}^n \left\{ \mathbb{I}(Z_i \leq r_n) - \mathbb{I}(Z_i \leq r)+\mathbb{I}(Z_i \leq r) \right\},    $$ 
     where $\frac{1}{n} \sum_{i = 1}^n\mathbb{I}(Z_i \leq r)\rightarrow_P P( Z \leq r ). $ Thus, it suffices to show that
     $$\frac{1}{n} \sum_{i = 1}^n \left|\mathbb{I}(Z_i \leq r_n) - \mathbb{I}(Z_i \leq r)\right|\rightarrow_P 0.$$
     Fixing now $\epsilon > 0$, we have that
     \begin{equation*}
     \begin{split}
     &P\left(\frac{1}{n} \sum_{i = 1}^n \left|\mathbb{I}(Z_i \leq r_n) - \mathbb{I}(Z_i \leq r)\right| > \epsilon\right) \\
     \leq& P\left(\frac{1}{n} \sum_{i = 1}^n  \mathbb{I}(Z_i \in \mathcal{B}(r,|r_n-r|)) > \epsilon\right),
     \end{split}
     \end{equation*}
     where $\mathcal{B}(r,|r_n-r|)$ is the closed ball (interval) centered at $r$ of radius $|r_n-r|.$ Let then $n$ be large enough and $\xi$ be chosen such that $P\left(|r_n-r| > \xi \right) < \delta/2$ and $P(Z\in\mathcal{B}(r,\xi)) = \epsilon/2$. Then,
     \begin{equation*}
     \begin{split}
     &P\left(\frac{1}{n} \sum_{i = 1}^n  \mathbb{I}(Z_i \in \mathcal{B}(r,|r_n-r|)) > \epsilon\right)\\
     &\leq P(|r_n-r| > \xi) + P\left(\frac{1}{n} \sum_{i = 1}^n  \mathbb{I}(Z_i \in \mathcal{B}(r,\xi)) > \epsilon \right)\\
     & < \frac{\delta}{2} +P\left(\left|\frac{1}{n} \sum_{i=1}^n \mathbb{I}(Z_i \in \mathcal{B}(r,\xi)) - P(Z\in\mathcal{B}(r,\xi)) \right| + P(Z\in\mathcal{B}(r,\xi)) > \epsilon \right)\\
     & \leq \frac{\delta}{2} +P\left(\left|\frac{1}{n} \sum_{i=1}^n \mathbb{I}(Z_i \in \mathcal{B}(r,\xi))- P(Z\in\mathcal{B}(r,\xi)) \right|  > \frac{\epsilon}{2} \right)  < \frac{\delta}{2}+\frac{\delta}{2} = \delta,
     \end{split}
     \end{equation*}
     when $n$ is large enough, concluding the proof.
    \end{proof}
\end{lemma}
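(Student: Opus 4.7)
The plan is to split the empirical average into a part involving the deterministic threshold $r$ plus a perturbation capturing the difference between thresholds $r_n$ and $r$. Writing
$$
\frac{1}{n}\sum_{i=1}^n \mathbb{I}(Z_i\leq r_n)=\frac{1}{n}\sum_{i=1}^n \mathbb{I}(Z_i\leq r)+\frac{1}{n}\sum_{i=1}^n \bigl\{\mathbb{I}(Z_i\leq r_n)-\mathbb{I}(Z_i\leq r)\bigr\},
$$
the first term on the right converges in probability to $P(Z\leq r)$ by the weak law of large numbers (the $\mathbb{I}(Z_i\leq r)$ are i.i.d.\ Bernoulli). Hence it suffices to show that the second, perturbation term vanishes in probability.

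For this, I would use the pointwise bound $|\mathbb{I}(z\leq r_n)-\mathbb{I}(z\leq r)|\leq \mathbb{I}\bigl(z\in\mathcal{B}(r,|r_n-r|)\bigr)$, where $\mathcal{B}(r,\rho)$ denotes the closed interval $[r-\rho,r+\rho]$. This reduces the task to proving that
$$
\frac{1}{n}\sum_{i=1}^n\mathbb{I}\bigl(Z_i\in\mathcal{B}(r,|r_n-r|)\bigr)\rightarrow_P 0.
$$

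The main obstacle, and the place where the hypotheses really enter, is that the radius $|r_n-r|$ is itself random, so one cannot apply the weak law of large numbers directly to a fixed interval. To get around this, I would fix $\epsilon,\delta>0$ and exploit the continuity of the distribution of $Z$, which ensures there is no point mass at $r$: choose $\xi>0$ so small that $P\bigl(Z\in\mathcal{B}(r,\xi)\bigr)<\epsilon/2$. Then decompose the event in question according to whether $|r_n-r|\leq \xi$ or not. On the bad event $\{|r_n-r|>\xi\}$, convergence in probability $r_n\to_P r$ gives probability at most $\delta/2$ for all $n$ large. On the good event, monotonicity lets me replace the random radius by the deterministic $\xi$, and the weak law of large numbers applied to the fixed indicators $\mathbb{I}(Z_i\in\mathcal{B}(r,\xi))$ bounds the deviation above $\epsilon/2$ by $\delta/2$ for $n$ large. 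Adding the two contributions yields the desired control and completes the proof.
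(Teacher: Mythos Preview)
Your proposal is correct and follows essentially the same route as the paper: the same decomposition into the fixed-threshold term plus a perturbation, the same indicator bound $|\mathbb{I}(z\leq r_n)-\mathbb{I}(z\leq r)|\leq \mathbb{I}(z\in\mathcal{B}(r,|r_n-r|))$, and the same splitting on the event $\{|r_n-r|\leq\xi\}$ combined with continuity of $Z$ and the weak law of large numbers. The only cosmetic difference is that you take $P(Z\in\mathcal{B}(r,\xi))<\epsilon/2$ whereas the paper writes equality; your version is in fact slightly cleaner.
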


\begin{proof}[Proof of Theorem \ref{thm:thm7}]

Define first $\mathcal{K}=\mathcal{B}(H+1)\cap \{v \mid v' o_1 \geq 0\}$, where $\mathcal{B}(H+1)$ is a closed ball in $\mathbb{R}^p$ with the radius $H+1$ and $H$ is as in the proof of Theorem~\ref{theo:existence}. We first show that any sequence of minimizers, say $h_n$, of \eqref{eq:sample_objective_function} over $\mathcal{K}$ converges in probability to $\psi o_1$. For this, we begin by showing that $f_{P_n}$ converges to $f_P$ a.s. uniformly on $\mathcal{K}$. Observe that $\mathcal{K}$ is closed as an intersection of two closed subsets, and is bounded since it is a subset of a bounded set $B(H+1)$. Thus, $\mathcal{K}$ is compact. 

Denote further 
$$
g:\mathcal{K}\times \mathbb{R}^p \to \mathbb{R} ,\quad g(v ; X)=\| X - v \|\| X + v \|-\| X \|^2.
$$
It is straightforward to see that for all $v \in \mathcal{K}$, $X \mapsto g(v ; X)$ is measurable, since it is continuous in $X$, and for every $X$, $v \mapsto g(v; X)$ is continuous. As shown in the proof of Theorem \ref{theo:moments}, $|g(v, X)|\leq \| v \|^2$, implying that $g$ is dominated on $\mathcal{K}$ by an integrable function.
Finally, let $X_1, \dots, X_n$ be an i.i.d. sample from the distribution of $X$. Strong uniform law of large numbers (ULLN) now implies that a.s.  
$$
f_{P_n}(v)=\frac{1}{n}\sum_{i=1}^ng(v;X_i) \, ^\rightarrow_\rightarrow{}_n \, \mathrm{E}\{ g(v;X) \},
$$     
where $^\rightarrow_\rightarrow{}_n$ denotes uniform convergence in $v \in \mathcal{K}$.

The proof of Theorem \ref{theo:existence} implies that even though $f_P$ has entire $\mathbb{R}^p$ as a domain, the global minimizers of $f_{P}$ can be found in $\mathcal{B}(H+1)$. Moreover, Theorem~\ref{theo:main_1} states that, under ellipticity, for $\phi_1(S_p)>1/2$, $f_P$ is uniquely minimized at $\pm\psi o_1$, for some $\psi>0$. Therefore, the restriction $f_P{}_{|\mathcal{K}}:\mathcal{K}\to\mathbb{R}$ is uniquely minimized at $\psi o_1$. In continuation, we work solely with the restriction $f_P{}_{|\mathcal{K}}$, however, for the simplicity of the notation, we denote it $f_P$. 

For every $n$, $v\in\mathcal{K}\mapsto f_{P_n}(v)$ is a continuous function on a compact set, having therefore a global minimizer $h_n\in\mathcal{K}$; $f_{P_n}(h_n) = \inf_{v\in\mathcal{K}}f_{P_n}(v)$. Finally, as we have shown that also $f_{P_n}\,{}^\rightarrow_\rightarrow{}_n\, f_P,$ Theorem 5.7 in \cite{van2000asymptotic} (the argmax theorem) implies that $h_n \rightarrow_p \psi o_1$, thus proving the statement.

We next extend the result to any sequence $v_n$ of minimizers over the sets $\mathcal{K}_n := \mathcal{B}(H_n + 1/2)\cap \{v \mid v' o_1 \geq 0\}$, where $H_n$ is defined in Appendix \ref{sec:radius}. Since the distribution of $\| X \|$ is continuous, we have $Q_{0.75}(G_n) \rightarrow_p Q_{0.75}(G)$. Moreover, by Lemma \ref{lem:quantile_region}, we have $P_n \{ X \in \mathcal{B}(R_{n0}) \} \rightarrow_p P \{ X \in \mathcal{B}(R_{0}) \}$, the coefficients of the quadratic polynomial in the proof of Theorem \ref{theo:existence} converge to those of the polynomial in Appendix \ref{sec:radius}. Consequently the roots of the former converge to those of the latter and, by the continuous mapping theorem, we have $H_n \rightarrow_p H$. Next, we define the auxiliary sequence $v_n^*$ which takes the value $v_n$ when $v_n \in \mathcal{K}$ and the value $0$ when $v_n \not\in \mathcal{K}$. From $H_n \rightarrow_p H$ we obtain that  $P(\mathcal{K}_n \subset \mathcal{K}) \rightarrow 1$ and, consequently, $P(v_n \in \mathcal{K}) \rightarrow 1$ as $n \rightarrow \infty$. This then implies that $v^*_n - v_n = o_p(1)$, making it sufficient to show that $v_n^* \rightarrow_p h$ where $h=\psi o_1$ is the population minimizer.


To see this, we observe that the sequence $v^*_n$ resides in the compact set $\mathcal{K}$. Hence, the desired result follows from Theorem 5.7 in \cite{van2000asymptotic} once we show that $f_{P_n}(v^*_n) \leq f_{P_n}(h) + o_p(1)$. Consider then the following four different combinations:
\begin{enumerate}
    \item If $v_n \in \mathcal{K}$ and $h \in \mathcal{K}_n$, then $f_{P_n}(v_n^*) = f_{P_n}(v_n) \leq f_{P_n}(h)$.
    \item If $v_n \in \mathcal{K}$ and $h \not\in \mathcal{K}_n$, then $f_{P_n}(v_n^*) = f_{P_n}(v_n) \leq f_{P_n}(0) = \mathcal{O}_p(1)$.
    \item If $v_n \not\in \mathcal{K}$ and $h \in \mathcal{K}_n$, then $f_{P_n}(v_n^*) = f_{P_n}(0) = \mathcal{O}_p(1)$.
    \item If $v_n \not\in \mathcal{K}$ and $h \not\in \mathcal{K}_n$, then $f_{P_n}(v_n^*) = f_{P_n}(0) = \mathcal{O}_p(1)$.
\end{enumerate}
Define now the intersection event $\mathcal{A}_n := \{ v_n \in \mathcal{K} \} \cap \{ h \in \mathcal{K}_n \}.$ Since $h\in B(H)$ and $H_n \to_p H$, it satisfies $P(\mathcal{A}_n) \rightarrow 1$, letting us write
\begin{align*}
    f_{P_n}(v_n^*) \leq \mathbb{I}(\mathcal{A}_n) f_{P_n}(h) + \mathbb{I}(\mathcal{A}_n^C) \mathcal{O}_p(1).
\end{align*}
Finally, since $\mathbb{I}(\mathcal{A}_n^C) = 1 - \mathbb{I}(\mathcal{A}_n) = o_p(1)$ and $f_{P_n}(h) = \mathcal{O}_p(1)$, we obtain
\begin{align*}
    f_{P_n}(v_n^*) \leq f_{P_n}(h) +(\mathbb{I}(\mathcal{A}_n) - 1) f_{P_n}(h) + o_p(1) \leq f_{P_n}(h) + o_p(1)
\end{align*}
 proving the claim.

\end{proof}

\begin{proof}[Proof of Theorem \ref{thm:thm8}]
For the sake of simplicity of the notation, we show the result for the sequence of minimizers $v_n$ of $f_n$ for which $v_n'o_1\geq 0$. 

We first use Theorem 5.23 in \cite{van2000asymptotic} to obtain an asymptotic linearization of the quantity $\sqrt{n}(v_n-\psi o_1)$. The conditions of this theorem are trivially fulfilled apart from two: the Lipschitz condition and the existence of a second-order Taylor expansion around $v := \psi o_1$. The latter of these follows from Theorem \ref{theo:second_derivatives} in Appendix \ref{sec:taylor} and the corresponding Hessian matrix must be positive definite since Theorem \ref{theo:main_1} states that $\pm v$ are the unique minimizers of the population objective function. To see that also the former holds, we use the nuclear norm form of the objective function in Lemma \ref{lem:nuclear_norm} and the reverse triangle inequality to write
\begin{align*}
    \left| \| X X ' - v_1 v_1' \|_* - \| X X ' - v_2 v_2' \|_* \right| \leq \| v_1 v_1' - v_2 v_2' \|_* = \| v_1 + v_2 \| \| v_1 - v_2 \|,
\end{align*}
where $\| v_1 + v_2 \|$ is bounded when $v_1, v_2$ are in a given neighbourhood of $v$. Consequently also the Lipschitz condition in \cite[Theorem 5.23]{van2000asymptotic} is satisfied and we obtain
\begin{align*}
    \sqrt{n}(v_n-\psi o_1) = -M^{-1} \sqrt{n}\nabla f_{P_n}(v) + o_p(1),
\end{align*}
where $M\in\mathbb{R}^{p\times p}$ is a symmetric matrix of the form $M=M_1-M_2-M_3+M_4+M_4',$
for
\begin{align*}
M_1&=\mathrm{E}\left[\frac{\|v+X\|}{\|v-X\|}+\frac{\|v-X\|}{\|v+X\|}\right] I_p\\
M_2&=\mathrm{E}\left[\frac{\|v+X\|}{\|v-X\|^3}(v-X)(v-X)'\right]\\
M_3&=\mathrm{E}\left[\frac{\|v-X\|}{\|v+X\|^3}(v+X)(v+X)'\right]\\
M_4&=\mathrm{E}\left[\frac{1}{\|v+X\|\|v-X\|}(v+X)(v-X)'\right],
\end{align*}
see the proof of Theorem \ref{theo:second_derivatives}. From Lemma \ref{lemma:uniformintegrability} it follows that each of the expected values is well-defined. Moreover, $\nabla f_{P_n}(v)$ denotes the quantity
\begin{align*}
    \nabla f_{P_n}(v) = \frac{1}{n} \sum_{i = 1}^n \left\{ \frac{ \| v + X_i \| } { \| v - X_i \| } ( v - X_i ) + \frac{ \| v - X_i \| } { \| v + X_i \| } ( v + X_i ) \right\}.
\end{align*}

We next derive a simplified form for the matrix $M^{-1}$. Let $U$ be any orthogonal matrix for which $Uo_1=o_1$. The specific structure of the covariance of $X$, along with ellipticity, gives $UX\sim X$. Orthogonal invariance of the Euclidean norm now gives 
$$
U M_i U'=M_i,\quad i=1,2,3,4.
$$
Lemma 13 in \cite{radojicic2025unsupervised} now states that there exist constants $a_i,b_i\in\mathbb{R}$ such that 
$$
M_i = a_i o_1o_1' + b_i(I_p-o_1o_1'),\quad i=1,2,3,4,
$$
where 
$$
a_i=o_1'M_io_1,\quad b_i=o_2'M_io_2,
$$
noting that $a_1=b_1$. Moreover, as  $X\sim-X$, $a_2=a_3$ and $b_2=b_3$. This further gives that 
\begin{align*}
M&=a_1I_p + (-2a_2+2a_4)o_1o_1'+(-2b_2+2b_4)(I_p-o_1o_1')\\
&=(a_1-2a_2+2a_4)o_1o_1' + (a_1-2b_2+2b_4)(I_p-o_1o_1').
\end{align*}
It is straightforward to verify that for $\alpha,\beta\neq 0$, $(\alpha o_1o_1' +\beta (I_p-o_1o_1'))^{-1}=\alpha^{-1} o_1o_1' +\beta^{-1} (I_p-o_1o_1')$,
finally giving that  
$$
M^{-1}=(a_1-2a_2+2a_4)^{-1}o_1o_1' + (a_1-2b_2+2b_4)^{-1}(I_p-o_1o_1').
$$

We next investigate limiting distribution of $\sqrt{n}\nabla f_{P_n}(v)$. The CLT gives that 
$$
\sqrt{n} \{ \nabla f_{P_n}(v) - \mu_0 \} \to_d \mathcal{N}(0,\Sigma_0),
$$
where 
$$
\mu_0=\mathrm{E}\left[\frac{ \| v + X \| } { \| v - X \| } ( v - X ) + \frac{ \| v - X \| } { \| v + X \| } ( v + X )  \right],
$$
$$
\Sigma_0=\mathrm{Cov} \left( \frac{ \| v + X \| } { \| v - X \| } ( v - X ) + \frac{ \| v - X \| } { \| v + X \| } ( v + X )\right).
$$
Observing that $\mu_0$ equals the gradient of the population objective calculated at the minimizer and that the minimizer is not at the boundary of the domain, we conclude that $\mu_0=0$. This additionally shows that $\Sigma_0$ takes the form
\begin{align*}
& \mathrm{E} \left[ \frac{ \| v + X \|^2 } { \| v - X \|^2 } ( v - X )( v - X )' + \frac{ \| v - X \|^2 } { \| v + X \|^2 } ( v + X )( v + X )' + 
2vv'-2XX'\right]\\
=& 2\mathrm{E} \left[ \frac{ \| v + X \|^2 } { \| v - X \|^2 } ( v - X )( v - X )'\right] +
2vv'-2\mathrm{Cov}(X).
\end{align*}
The condition $\mathrm{E}\|X\|^2<\infty$ in the theorem statement ensures that $\mathrm{Cov}(X)$ and the expected value above exist as finite. Using then the same argumentation as for $M_i$ above, we conclude 
$$
\mathrm{E} \left[ \frac{ \| v + X \|^2 } { \| v - X \|^2 } ( v - X )( v - X )'\right]=c_1o_1o_1'+d_1(I_p-o_1o_1'),
$$
for 
$$
c_1=\mathrm{E}\left[ \frac{ \| v + X \|^2 } { \| v - X \|^2 } \{ o_1'( v - X ) \}^2\right],\quad d_1=\mathrm{E}\left[ \frac{ \| v + X \|^2 } { \| v - X \|^2 } \{ o_2'( v - X ) \}^2\right].
$$
Additionally, as $v=\psi o_1$, $\|v\|^2=\psi^2$, it holds that 
$$
\mathrm{Cov}(X)-vv'=\sigma_1o_1o_1' +\sigma_2(I_p-o_1o_1')-\psi^2o_1o_1'=(\sigma_1-\psi^2)o_1o_1' +\sigma_2(I_p-o_1o_1'),
$$
where $\sigma_1,\sigma_2$ are the leading and the remaining eigenvalue of $\mathrm{Cov}(X)$, respectively. This finally gives 
$$
\Sigma=2(c_1-\sigma_1+\psi^2)(a_1-2a_2+2a_4)^{-2} o_1o_1' + 2(a_1-2b_2+2b_4)^{-2}(d_1-\sigma_2) (I_p-o_1o_1'),
$$
concluding the proof. The coefficients in the statement of the theorem are thus
\begin{align*}
    q_1 &= 2(c_1-\sigma_1+\psi^2)(a_1-2a_2+2a_4)^{-2}, \\
    q_2 &= 2(a_1-2b_2+2b_4)^{-2}(d_1-\sigma_2).
\end{align*}
\end{proof}

\begin{proof}[Proof of Lemma \ref{lem:decomposition}]
    Slutsky's theorem yields
    \begin{align*}
        \sqrt{n} ( \| h_n \| - \| h \| ) =& \sqrt{n} \frac{\| h_n \|^2 - \| h \|^2}{ \| h_n \| + \| h \|} \\
        =& \frac{1}{2 \| h \|} \left\{ \sqrt{n} (h_n - h)'h_n + h' \sqrt{n} (h_n - h) \right\} + o_p(1) \\
        =& \frac{h'}{\| h \|}  \sqrt{n} (h_n - h) + o_p(1)
    \end{align*}
    Arguing and expanding similarly, we get for the directions that
    \begin{align*}
        \sqrt{n} \left( \frac{h_n}{\| h_n \|} - \frac{h}{\| h \|} \right) =& \frac{1}{\| h_n \|} \sqrt{n} (h_n - h) + \sqrt{n} \left( \frac{1}{\| h_n \|} - \frac{1}{\| h \|} \right) h \\
        =& \frac{1}{\| h \|} Q \sqrt{n} (h_n - h) + o_p(1).
    \end{align*}
    The result now follows using the equivariance properties of the multivariate normal distribution.
\end{proof}

\begin{proof}[Proof of Theorem \ref{theo:pca}]
    From the proof of Theorem A.1 in \cite{radojivcic2021large} we directly obtain that
    \begin{align*}
        & \sqrt{n} (s_n u_n - o_1) \\
        =& - \frac{1}{\mathrm{E}(Z_1^2)(\lambda^2 - 1)} (I_p - o_1 o_1') \sqrt{n} \left\{ \frac{1}{n}\sum_{i = 1}^n X_i X_i' - \mathrm{E}(Z_1^2) O \Lambda^2 O' \right\} o_1 + o_p(1) \\
        =& - \frac{1}{\mathrm{E}(Z_1^2)(\lambda^2 - 1)} O (I_p - e_1 e_1') \sqrt{n} \left\{ \frac{1}{n}\sum_{i = 1}^n \lambda Z_{i1} \cdot \Lambda Z_i - \mathrm{E}(Z_1^2) \lambda^2 e_1 \right\} + o_p(1).
    \end{align*}
    In order to apply the CLT, what remains is to compute the covariance matrix
\begin{align*}
    \mathrm{Cov} (\lambda Z_1 \cdot \Lambda Z) = \lambda^2 \Lambda \mathrm{E} (Z_1^2 Z Z') \Lambda - \{ \mathrm{E} (Z_1^2) \}^2 \lambda^4 e_1 e_1'.
\end{align*}
By the symmetry and permutation invariance of the spherical distribution $Z$, we have $\mathrm{E} (Z_1^2 Z Z') = \mathrm{diag}(\mathrm{E}(Z_1^4), \mathrm{E}(Z_1^2 Z_2^2), \ldots , \mathrm{E}(Z_1^2 Z_2^2))$. Hence, the limiting covariance matrix equals
\begin{align*}
     \frac{\lambda^2 \mathrm{E}(Z_1^2 Z_2^2)}{\{ \mathrm{E}(Z_1^2) \}^2(\lambda^2 - 1)^2} O (I_p - e_1 e_1') O'.
\end{align*}

\end{proof}

  
  



\subsection{Proofs related to Section \ref{sec:algorithm}}\label{sec:proofs_4}

We being with the properties of the so-called modified gradient.

\begin{lemma}\label{lemma:kuhn}
    Let $X$ be as in Theorem \ref{thm:thm7} and let $H_n$ and $v_n$ be as in the proof of Theorem \ref{thm:thm7}, such that $v_n\in B(H_n+1)\backslash \partial B(H_n+1) $, 
    where for the set $\mathcal{A}$,  $\partial \mathcal{A}$ denotes the boundary of $\mathcal{A}$. Let $\nabla f_{P_n}^*:B(H_n+1)
    \to \mathbb{R}^p$ be the modified gradient of $f_{P_n}$ defined as
    $$
    \nabla f_{P_n}^*(v) = \begin{cases}
        \nabla f_{P_n}(v),\quad v\notin\{\pm X_i \mid i=1,\dots,n\}\\
        \max\{\|\nabla f_k\|-2\|X_k\|,0\}\,\frac{\nabla f_k}{\|\nabla f_k\|},\quad v = X_k\\
        \max\{\|\nabla f_{-k}\|-2\|X_k\|,0\}\,\frac{\nabla f_{-k}}{\|\nabla f_{-k}\|},\quad v = -X_k,
    \end{cases}
    $$
    where 
    \begin{align*}
    \nabla f_{k} &= \sum_{\substack{i\in\{1,\dots, n, -1, \dots, -n\}\\i\neq\pm k}}\frac{\|X_i+X_k\|}{\|X_i-X_k\|}(X_i-X_k), \\
    \nabla f_{-k} &= \sum_{\substack{i\in\{1,\dots, n, -1, \dots, -n\}\\i\neq\pm k}}\frac{\|X_i-X_k\|}{\|X_i+X_k\|}(X_i+X_k),
    \end{align*}
    and we let $X_{-j}:=-X_j$, $j=1,\dots,n$. Then, the following holds:
    \begin{itemize}
        \item[(a)]  $\nabla f_{P_n}^*$ is a.s. two times continuously differentiable in some $\delta_n$-neighborhood of the population minimizer $h$. Moreover, in this set $\nabla(\nabla f_{P_n}^*)(v)= H_{P_n}(v)$, where $H_{P_n}$ denotes the Hessian of $f_{P_n}$.
        \item[(b)]  $\nabla f_{P_n}^*(v_n) = 0$.
    \end{itemize}
\end{lemma}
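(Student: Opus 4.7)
The plan is to treat the two parts separately, with part (a) being a short smoothness argument and part (b) being a first-order optimality calculation via the Clarke/convex subdifferential of $f_{P_n}$ at the exceptional points $\pm X_k$.

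For part (a), I would first observe that, since each $X_i$ admits a density, the population minimizer $h=\psi o_1$ satisfies $h\neq\pm X_i$ for every $i=1,\dots,n$ almost surely. Because $\{\pm X_1,\dots,\pm X_n\}$ is a finite set, there a.s.\ exists $\delta_n>0$ so small that $\mathcal{B}(h,\delta_n)\cap\{\pm X_1,\dots,\pm X_n\}=\emptyset$. On this ball, every summand $v\mapsto\|v-X_i\|\|v+X_i\|$ is $C^\infty$ as a product of two smooth, nowhere-vanishing maps, so $f_{P_n}$ is itself $C^\infty$; moreover, by the definition of $\nabla f_{P_n}^*$, it coincides with $\nabla f_{P_n}$ throughout $\mathcal{B}(h,\delta_n)$. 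The Jacobian of $\nabla f_{P_n}^*$ is therefore the classical Hessian $H_{P_n}$ of $f_{P_n}$.

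For part (b), I would split the argument on whether $v_n\in\{\pm X_1,\dots,\pm X_n\}$. If $v_n$ does not coincide with any $\pm X_i$, then $f_{P_n}$ is smooth at $v_n$ and, since $v_n$ is an interior minimizer of $f_{P_n}$ over $\mathcal{B}(H_n+1/2)$, the classical first-order condition gives $\nabla f_{P_n}(v_n)=0$ and hence $\nabla f_{P_n}^*(v_n)=0$. The non-trivial case is $v_n=\pm X_k$; by the symmetry $f_{P_n}(v)=f_{P_n}(-v)$ it suffices to treat $v_n=X_k$. Here I would work with the convex subdifferential. Since a.s.\ $X_k\neq\pm X_i$ for $i\neq k$, the summands with $i\neq k$ are $C^\infty$ at $X_k$, whereas the $k$th summand behaves like $2\|X_k\|\cdot\|v-X_k\|+o(\|v-X_k\|)$ because $v\mapsto\|v+X_k\|$ is smooth at $X_k$ with value $2\|X_k\|$. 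Combining this with the well-known fact $\partial\|\cdot-X_k\|(X_k)=\overline{\mathcal{B}(0,1)}$ yields
\begin{align*}
    n\,\partial f_{P_n}(X_k) \;=\; \overline{\mathcal{B}(0,2\|X_k\|)}+G_k,
\end{align*}
where $G_k$ is the sum, over $i\neq k$, of the classical gradients of $v\mapsto\|v-X_i\|\|v+X_i\|$ evaluated at $v=X_k$.

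A direct computation, pairing each positive index $j\neq k$ with its negative counterpart $-j$ and using $X_{-j}=-X_j$ and the symmetry of $\|\cdot\|$, shows $G_k=-\nabla f_k$, where $\nabla f_k$ is as defined in the statement. Applying the first-order optimality condition $0\in\partial f_{P_n}(X_k)$ then gives $\nabla f_k\in\overline{\mathcal{B}(0,2\|X_k\|)}$, i.e.\ $\|\nabla f_k\|\leq 2\|X_k\|$, so that the truncation $\max\{\|\nabla f_k\|-2\|X_k\|,0\}$ in the definition of $\nabla f_{P_n}^*$ vanishes and $\nabla f_{P_n}^*(X_k)=0$. The case $v_n=-X_k$ is symmetric with $\nabla f_k$ replaced by $\nabla f_{-k}$. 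The main obstacle I expect is the careful index- and sign-bookkeeping needed to identify the smooth-part gradient $G_k$ with $-\nabla f_k$; everything else is standard subdifferential calculus, and the continuous-distribution assumption disposes of the measure-zero ambiguities (e.g.\ $X_k=0$ or coincidences $X_j=\pm X_k$).
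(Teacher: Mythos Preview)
Your proposal is correct and follows essentially the same line as the paper's proof. For (a) you argue exactly as the paper does. For (b), the paper computes the one-sided directional derivative of $f_{P_n}$ at $X_k$ directly, obtaining $\frac{1}{n}(2\|X_k\|+\langle\nabla f_k,u\rangle)$, and then minimizes over $u$ to deduce $\|\nabla f_k\|\le 2\|X_k\|$; your Clarke-subdifferential formulation is simply the dual way of packaging the same directional-derivative calculation, and your identification $G_k=-\nabla f_k$ via the $\pm j$ pairing is precisely the bookkeeping the paper's definition of $\nabla f_k$ is designed for. One small remark: the objective is not convex, so you should speak of the Clarke subdifferential throughout rather than the convex one, and note that the sum and product rules hold with equality here because each summand is Clarke-regular at $X_k$ (smooth for $i\neq k$, and a product of a smooth positive function with the convex $\|v-X_k\|$ for $i=k$).
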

\begin{proof}[Proof of Lemma \ref{lemma:kuhn}]
    By the continuity of the distribution $P$, the $2n$ vectors $X_1, \ldots, X_n, -X_1, \ldots, -X_n$ are all distinct a.s. As we have $\pm X_i\neq h$ a.s., where $h$ is the population minimizer, this means that there exists a  open ball with radius $\delta_n$ around $h$ such that $\pm X_i$, $i=1,\dots,n$, do not reside there, further giving that $\nabla f_{P_n}^*$ coincides with $\nabla f_{P_n}$ there. Claim (a) now follows from the fact that $\nabla f_{P_n}$ is two times continuously differentiable everywhere, except at $\pm X_i$, $i=1,\dots,n$. To prove statement (b) we differentiate two cases: if $v_n\neq \pm X_i$, since it is not on the boundary of the domain, and $f_{P_n}$ is differentiable at $v_n$, $0=\nabla f_{P_n}(v_n)=\nabla f_{P_n}^*(v_n)$. 

    Let now $v_n=X_k$ for some $k\in\{1,\dots,n\}$, and consider the change from $X_k$ to $X_k - tu$, for some unit vector $u$. Let $t > 0$ and focus now on the function 
    \begin{align*}
    \begin{split}
        & t\mapsto f_{P_n}(X_k-tu) \\
        =& \frac{1}{n} t\|2 X_k-tu\| + \frac{1}{n} \sum_{
    \substack{i = 1 \\ i\neq k}}^n \|X_k-tu-X_i\|\|X_k-tu+X_i\|,
        \end{split}
    \end{align*}
    where we have ignored, without loss of generality, the term $(1/n) \sum_{i = 1}^n \| X_i \|^2$. In particular, for $X_k\neq 0$, 
       \begin{align*}
    & \lim_{t \downarrow 0} \left\{  \frac{f_{P_n}(X_k-tu) - f_{P_n}(X_k)}{t} \right\} \\
    =& \frac{2}{n}\|X_k\| - \frac{1}{n}\sum_{\substack{i = 1 \\ i\neq k}}^n \left(\frac{\langle X_k-X_i,u \rangle}{\|X_k-X_i\|}\|X_k+X_i\|+\frac{\langle X_k+X_i,u \rangle}{\|X_k+X_i\|}\|X_k-X_i\|\right)\\
    =& \frac{1}{n}\left(2\|X_k\| + \langle \nabla f_k,u\rangle\right).
    \end{align*}
    In case $X_k=0$, the directional derivative is similarly computed to equal $0 = 2 \|X_k\| + \langle \nabla f_k,u\rangle$. Therefore, the upper expression for the derivative holds for all $X_k \in \mathbb{R}^p$.
    
    If $X_k$ is a minimizer of $f_{P_n}$, then $2\|X_k\| + \langle \nabla f_k,u\rangle\geq 0$ for every direction $u$, thus in particular for the direction of the steepest descent, which is $u= -\nabla f_k/\| \nabla f_k\|$. Thus, 
    $2\|X_k\| - \|\nabla f_k\|\geq 0$, further giving $\nabla f_{P_n}^*(X_k)=0$. The case $v_n= -X_k$ for some $k\in\{1,\dots,n\}$ is proven analogously.
\end{proof}

\begin{proof}[Proof of Lemma \ref{alg::lemma1}]
For $u,v\in\mathbb{R}^p,\,v\neq \pm X_i$, $i=1,\dots,n$,  define the auxiliary function 
$$
h(u,v)=\frac{1}{n}\sum_{i=1}^n\frac{\|X_i-v\|\|X_i+u\|^2}{\|X_i+v\|}+\frac{1}{n}\sum_{i=1}^n\frac{\|X_i+v\|\|X_i-u\|^2}{\|X_i-v\|},
$$
and observe that for $v\neq \pm X_i$, $h(v,v)=2f_{P_n}(v)$. For fixed $v\neq \pm X_i$, define further $g_v(u):=h(u,v)$. We now show that for every $v\neq \pm X_i$, $u\mapsto g_v(u)$ is strictly convex. To simplify the notation, let $\alpha_i=\alpha_i(v)=\frac{\|X_i-v\|}{\|X_i+v\|}>0$. Then, 
$$
g_v(u)=\frac{1}{n}\sum_{i=1}^n\alpha_i\|X_i+u\|^2+\frac{1}{n}\sum_{i=1}^n\alpha_i^{-1}\|X_i-u\|^2, 
$$
with the gradient and the Hessian
\begin{align}
\nabla g_v(u)&=\frac{2}{n}\sum_{i=1}^n(\alpha_i-\alpha_i^{-1})X_i+\frac{2}{n}\sum_{i=1}^n(\alpha_i+\alpha_i^{-1})u,\label{eq:g_grad}\\
Hg_v(u)&=\frac{2}{n}\sum_{i=1}^n(\alpha_i+\alpha_i^{-1})I_p,\label{eq:g_hess}
\end{align}
respectively. Since for all $i=1,\dots,n$ we have $\alpha_i>0$, \eqref{eq:g_hess} implies that $g_v$ is strictly convex. Moreover, solving $\nabla g_v(u)=0$, \eqref{eq:g_grad} gives that 
$$
u=\frac{-1}{\sum_{i=1}^n(\alpha_i+\alpha_i^{-1})}\sum_{i=1}^n(\alpha_i-\alpha_i^{-1})X_i=T(v)
$$ 
is the unique minimum of $g_v(u)$. Therefore, for a fixed $v\neq \pm X_i$, $i=1\dots,n$, $T (v)=\mathrm{argmin}_{u\in\mathbb{R}^p}h(u,v).$ Thus, 
\begin{align}\label{eq:v_Tv_ineq1}
    h(T(v),v)\leq h(v,v)=2f_{P_n}(v), 
\end{align}
and the equality holds iff $v=T(v)$. We next show that $h(T(v),v)\geq 2f_{P_n}(T(v))$, where the equality holds iff $v=T(v)$. To see this, we write
\begin{align}\label{eq:v_Tv_ineq2}
    h(u,v)=&~\frac{1}{n}\sum_{i=1}^n\frac{(\|X_i-v\|\|X_i+u\|)^2+(\|X_i+v\|\|X_i-u\|)^2}{\|X_i-v\|\|X_i+v\|}\nonumber\\
    =&~\frac{1}{n}\sum_{i=1}^n\frac{\left(\|X_i-v\|\|X_i+u\|-\|X_i+v\|\|X_i-u\|\right)^2}{\|X_i-v\|\|X_i+v\|}\nonumber\\
    +&~\frac{2}{n}\sum_{i=1}^n\frac{\|X_i-v\|\|X_i+u\|\|X_i+v\|\|X_i-u\|}{\|X_i-v\|\|X_i+v\|}\nonumber\\
    =&~ \frac{1}{n}\sum_{i=1}^n\frac{\left(\|X_i-v\|\|X_i+u\|-\|X_i+v\|\|X_i-u\|\right)^2}{\|X_i-v\|\|X_i+v\|} + 2f_{P_n}(u)\nonumber\\
    \geq&~2f_{P_n}(u).
\end{align}
Equations \eqref{eq:v_Tv_ineq1} and \eqref{eq:v_Tv_ineq2} now give $2f_{P_n}(T(v))\leq h(T(v),v)\leq h(v,v)=2f_{P_n}(v)$, with the equality iff $T(v)=v$, showing the desired claim.



\end{proof}

\begin{proof}[Proof of Lemma \ref{alg::lemma2}]

Let $v_{\mathrm{old}}=X_k$ for some $k = 1,\dots,n$, and assume first that $\|\nabla f_k\|-2\|X_k\| > 0$. Consider now the change from $X_k$ to $X_k - td_k$, for some $t > 0$ and a unit vector $d_k = -\nabla f_k/\| \nabla f_k\|$, where $\nabla f_k$ is as defined in Lemma~\ref{lemma:kuhn}. Let $t > 0$ and focus now on the function 
    \begin{align*}
    \begin{split}
        & t\mapsto f_{P_n}(X_k-td_k) \\
        =& \frac{1}{n} t\|2 X_k-td_k\| + \frac{1}{n} \sum_{
    \substack{i = 1 \\ i\neq k}}^n \|X_k-td_k-X_i\|\|X_k-td_k+X_i\|,
        \end{split}
    \end{align*}
    where we again ignore, without loss of generality, the term $(1/n) \sum_{i = 1}^n \| X_i \|^2$. The proof of Lemma \ref{lemma:kuhn} gives that 
     \begin{align*}
     \lim_{m \to \infty} \left[  m\{f_{P_n}(X_k-\tfrac{1}{m}d_k) - f_{P_n}(X_k) \} \right] = \frac{1}{n}\left(2\|X_k\| - \|\nabla f_k\|\right)=:L_k<0.
    \end{align*}
    By the definition of limit, we can take $m$ large enough such that 
    $$
    |m \{ f_{P_n}(X_k-\tfrac{1}{m}d_k) - f_{P_n}(X_k) \} - L_k|<-L_{k}/2,
    $$
    further giving 
    $$
     f_{P_n} \left( X_k-\frac{1}{m}d_k \right) - f_{P_n}(X_k)<\frac{L_k}{2m}<0.
    $$
    For such a choice of $m$, we note that equality cannot be reached in $f_{P_n}(T_i(v))\leq f_{P_n}(v)$, showing that the claimed equivalence holds in case $\|\nabla f_k\|-2\|X_k\| > 0$. Consider then the situation $\|\nabla f_k\| - 2\|X_k\| \leq 0$, in which case, by definition, $T_k(X_k)=X_k$ and the claimed equivalence then holds trivially.
    
    Finally, the case $v_{old}= -X_k$ for some $k\in\{1,\dots,n\}$ is proven analogously.
\end{proof}

\begin{lemma}\label{lem:Tv_is_v_1}
    Under the conditions of Lemma \ref{alg::lemma1}, if $T(v) = -v$, then we must have $v = 0$.
\end{lemma}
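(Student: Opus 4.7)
The plan is to exploit the algebraic identity $\|v+X_i\|^2-\|v-X_i\|^2 = 4 v'X_i$ to simplify $T(v)$, and then show that the inner product $v'T(v)$ is manifestly nonnegative, making the equation $T(v)=-v$ incompatible with $v\neq 0$.

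More concretely, I would first rewrite the update \eqref{eq:T_iterative} as
\begin{align*}
    T(v) = \frac{4}{n\,L(v)} \sum_{i=1}^n \frac{(v'X_i)\,X_i}{\|v-X_i\|\,\|v+X_i\|}.
\end{align*}
Since we are under the assumptions of Lemma \ref{alg::lemma1}, i.e.\ $v\neq \pm X_i$ for all $i$, every denominator in the sum defining $T(v)$ and every denominator in
\begin{align*}
    L(v) = \frac{1}{n}\sum_{i=1}^n \frac{2\|v\|^2 + 2\|X_i\|^2}{\|v-X_i\|\,\|v+X_i\|}
\end{align*}
is strictly positive, and (assuming as usual that the sample is not identically zero) $L(v)>0$, so $T(v)$ is well-defined.

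Taking the inner product with $v$ then yields
\begin{align*}
    v'T(v) = \frac{4}{n\,L(v)} \sum_{i=1}^n \frac{(v'X_i)^2}{\|v-X_i\|\,\|v+X_i\|} \geq 0,
\end{align*}
since every summand is nonnegative and the denominator $n L(v)$ is positive. Now suppose $T(v)=-v$. Then $v'T(v) = -\|v\|^2$, and combining with the inequality above gives $-\|v\|^2 \geq 0$, forcing $\|v\|=0$, i.e.\ $v=0$, as claimed.

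There is no real obstacle here beyond verifying well-definedness of the expressions; the substance of the argument is the single observation that $T(v)$, viewed as a weighted sum of the $X_i$'s with weights proportional to $v'X_i$, automatically makes a nonnegative angle with $v$, which rules out the antipodal case $T(v)=-v$ except at the origin.
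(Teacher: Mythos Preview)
Your proof is correct and takes essentially the same approach as the paper's: both compute the inner product with $v$ and use nonnegativity of the resulting sum to force $\|v\|=0$. Your version is slightly more streamlined, working directly with $v'T(v)$ via the identity $\|v+X_i\|^2-\|v-X_i\|^2=4v'X_i$, whereas the paper first rearranges $T(v)=-v$ into an equivalent vector equation, expands the combined numerator to $2\|v\|^2 v + 4X_iX_i'v + 2\|X_i\|^2 v$, and then multiplies by $v'$ (the paper also notes a one-line alternative via Lemma~\ref{alg::lemma1} and the symmetry $f_{P_n}(-v)=f_{P_n}(v)$).
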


\begin{proof}[Proof of Lemma \ref{lem:Tv_is_v_1}]
If $T(v) = -v$ then, by the definitions of $T(v)$ and $L(v)$, we have
\begin{align}\label{eq:Tv_is_v_1}
    \frac{1}{n} \sum_{i = 1}^n \left\{ \frac{ \| v + X_i \| } { \| v - X_i \| } ( v + X_i ) + \frac{ \| v - X_i \| } { \| v + X_i \| } ( v - X_i ) \right\} = 0.
\end{align}
After combining the two terms, the numerator of the $i$th summand reads
\begin{align*}
    2 \| v \|^2 v + 4 X_i X_i'v + 2 \| X_i \|^2 v.
\end{align*}
Multiplying \eqref{eq:Tv_is_v_1} by $v'$ from the left thus gives
\begin{align*}
    \frac{1}{n} \sum_{i = 1}^n \frac{ 2 \| v \|^4 + 4 (X_i'v)^2 + 2 \| X_i \|^2 \| v \|^2 } { \| v - X_i \| \| v + X_i \| } = 0,
\end{align*}
which implies that $v = 0$.
\end{proof}

We note that a simpler way to prove Lemma \ref{lem:Tv_is_v_1} is through Lemma \ref{alg::lemma1}, as follows. Assume that $T(v) = -v \neq 0$. Then, by Lemma \ref{alg::lemma1}, we have $f_{P_n}(v) = f_{P_n}(-v) = f_{P_n}(T(v)) < f_{P_n}(v)$, which is a contradiction.

\begin{lemma}\label{lem:Tv_is_v_2}
    Under the conditions of Lemma \ref{alg::lemma2}, if $T_i(v) = -v$, then we must have $v = 0$.
\end{lemma}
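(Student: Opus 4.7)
The plan is to split on the two branches of the definition of $T_i$ in \eqref{eq:modified_T} and handle each case separately. In the first branch, where $\|\nabla f_i\| - 2\|X_i\| \leq 0$, the definition gives $T_i(v) = v$ outright; combining this with the hypothesis $T_i(v) = -v$ immediately forces $v = -v$, hence $v = 0$, and there is nothing more to prove.

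For the second branch, where $T_i(v) = v - \varepsilon d_i$ with $\varepsilon \in (0, M)$ chosen as guaranteed by Lemma \ref{alg::lemma2}, I would argue by contradiction. Suppose $T_i(v) = -v$ and $v \neq 0$. The key observation is the sign-symmetry of the objective: since each summand $\|X_i - v\|\|X_i + v\| - \|X_i\|^2$ is unchanged under $v \mapsto -v$, we have $f_{P_n}(-v) = f_{P_n}(v)$. Consequently,
$$f_{P_n}(T_i(v)) = f_{P_n}(-v) = f_{P_n}(v).$$
Because $v \neq 0$, the relation $T_i(v) = -v$ implies $T_i(v) \neq v$, so the ``equality iff $T_i(v) = v$'' portion of Lemma \ref{alg::lemma2} forces the strict inequality $f_{P_n}(T_i(v)) < f_{P_n}(v)$, contradicting the displayed equation. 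Hence $v = 0$.

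There is no real obstacle here; the argument is essentially a direct exploitation of the symmetry of $f_{P_n}$ against the strict-decrease property established in Lemma \ref{alg::lemma2}. The only subtle point worth flagging is the appeal to the strict inequality: this relies on the $\varepsilon$ being chosen small enough that the step $v - \varepsilon d_i$ does in fact decrease the objective strictly, which is precisely how the proof of Lemma \ref{alg::lemma2} produces $\varepsilon$ (and why the upper bound $\varepsilon < M$ is embedded in the definition \eqref{eq:modified_T}, as noted in the text immediately after the lemma).
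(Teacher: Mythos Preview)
Your proof is correct and there is no circularity: Lemma~\ref{alg::lemma2} is proved independently of Lemma~\ref{lem:Tv_is_v_2}, and in the second branch the $\varepsilon$ in the definition \eqref{eq:modified_T} is already chosen so that $f_{P_n}(T_i(v)) < f_{P_n}(v)$ strictly, which is all you invoke.

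Your route, however, is genuinely different from the paper's. The paper argues directly from the geometry of the update: in the second branch, $T_i(X_i) = -X_i$ forces $2X_i = \varepsilon d_i$, hence $\varepsilon = 2\|X_i\|$, and the constraint $\varepsilon < M$ is then used to rule this out. In other words, the paper's proof is the place where the upper bound $\varepsilon < M$ in \eqref{eq:modified_T} is actually consumed, as the text after Lemma~\ref{alg::lemma2} announces. Your argument, by contrast, never touches the bound $\varepsilon < M$; it relies only on the sign-symmetry $f_{P_n}(-v)=f_{P_n}(v)$ together with the strict-decrease conclusion of Lemma~\ref{alg::lemma2}. This is precisely the ``simpler way'' the paper sketches immediately after its proof of the companion Lemma~\ref{lem:Tv_is_v_1} for the map $T$; you have transported that same idea to $T_i$. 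What the paper's direct computation buys is an explanation of \emph{why} the cap $\varepsilon < M$ is imposed in the first place; what your argument buys is a shorter proof that makes clear the result would hold for any $\varepsilon>0$ satisfying the strict-decrease requirement, regardless of the upper bound.
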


\begin{proof}[Proof of Lemma \ref{lem:Tv_is_v_2}]
    Assume that $v = X_i$ and $T_i(v) = - v$. If $\|\nabla f_i\|-2\|X_i\| \leq 0$, then the claim follows trivially. Whereas, if $\|\nabla f_i\|-2\|X_i\| > 0$, we have $2 X_i = - \varepsilon \nabla f_i/\|\nabla f_i\|$. This implies that $\varepsilon = 2 \| X_i \|$, which, together with the assumption that $\varepsilon < M$ shows that we must have $X_i = 0$ and $\varepsilon = 0$, leading to a contradiction. Hence, the claim is true.
\end{proof}

\begin{proof}[Proof of Corollary~\ref{cor:alg}]
    Fix $\Delta>0$, and let $v_1,v_2,\dots$ and $f_{P_n}(v_1),f_{P_n}(v_2),\dots$ be the sequences of optimizers obtained by Algorithm \ref{alg} and the corresponding functional values. 
    
    Lemmas \ref{alg::lemma1} and \ref{alg::lemma2} guarantee that $f_{P_n}(v_{k+1})\leq f_{P_n}(v_k)$ for every $k$, making the sequence fo the functional values decreasing. Moreover, $f_{P_n}(v)\geq 0$ for every $v\in\mathbb{R}^p$. Therefore, the sequence $(f_{P_n}(v_k))_k$ is bounded from below and thus convergent. 
    
    Denote by $L$ the corresponding limit. Convergence implies that there exists a finite number of steps $k_0\in\mathbb{N}$, such that for every $k\geq k_0$ $|f_{P_n}(v_k)-L|<\Delta/2$, which further gives
    \begin{equation*}
    |f_{P_n}(v_k) - f_{P_n}(v_{k+1})| \leq  |f_{P_n}(v_k) - L| + |f_{P_n}(v_{k+1})-L|<\Delta.
    \end{equation*}

    The above argument shows that the claimed convergence holds, as long as we do not hit \texttt{Step 12} where the error $err$ is artificially increased. Hence, we next still show that \texttt{Step 12} can not cause diverging behavior, in the sense of loops. First we note that \texttt{Step 12} can be reached at most $n$ times when running the algorithm. This is because if we are at \texttt{Step 12}, then $v_\mathrm{new} \neq v_\mathrm{old}$ and, by Lemma \ref{alg::lemma2}, we have the strict inequality $f_{P_n}(v_\mathrm{new}) < f_{P_n}(v_\mathrm{old})$. Consequently, since iteration of the algorithm cannot increase the objective function value (see Lemmas \ref{alg::lemma1} and \ref{alg::lemma2}), \texttt{Step 12} can occur at most once per sample point. Hence, when running the algorithm, either of two cases can happen: (1) The algorithm converges before hitting \texttt{Step 12} a total of $n$ times. (2) The algorithm hits \texttt{Step 12} a total of $n$ times, after which every new iteration satisfies either $v_\mathrm{new} = v_\mathrm{old}$ and $err = 0$ (convergence) or $f_{P_n}(v_\mathrm{new}) < f_{P_n}(v_\mathrm{old})$ (leading to eventual convergence since the sequence of objective function values is convergent).
\end{proof}

\section{Explicit forms for the integral \eqref{eq:durre_integral}}
\label{sec:integration}

Let $I_p(\lambda)$ denote the value of the integral \eqref{eq:durre_integral} for a given $p$. Then we have the following result.

\begin{theorem}
    We have,
    \begin{align*}
        I_2(\lambda) &= \frac{\lambda}{\lambda +1} \\
        I_3(\lambda) &= \frac{\lambda^2}{\lambda^2-1} \left(1- \frac{\arccos(\frac{1}{\lambda})}{\sqrt{\lambda^2-1}} \right) \\
        I_4(\lambda) &= \frac{\lambda^2}{(\lambda+1)^2}
    \end{align*}
    \begin{proof}
        Let us first consider the case $p=2$. The change of variable $u = (1+x)^{-\frac12}$ gives $x = 1/u^2-1$ and $dx = -2u^{-3} du$. Furthermore,
        \begin{equation*}
        \begin{split}
        \int (1+\lambda^2x)^{-\frac{3}{2}}(1+x)^{-\frac{1}{2}} dx &= -2\int \frac{u}{u^3\left(1+\lambda^2\left(\frac{1}{u^2} - 1\right)\right)^\frac32} du \\
        &= -2\int \frac{u}{\left(\lambda^2- (\lambda^2-1)u^2\right)^\frac32} du,
        \end{split}
        \end{equation*}
        where $s = \lambda^2- (\lambda^2-1)u^2$ gives $du = \frac{ds}{-2(\lambda^2-1)u}$ and the following antiderivative
        \begin{equation*}
        \begin{split}
        F_2(x) := \int (1+\lambda^2x)^{-\frac{3}{2}}(1+x)^{-\frac{1}{2}} dx &= \frac{1}{\lambda^2-1} \int s^{-\frac32} ds = - \frac{2}{\lambda^2-1} s^{-\frac12}\\
        &= - \frac{2}{\lambda^2-1} (\lambda^2 - (\lambda^2-1)u^2)^{-\frac12} = \\
        &= - \frac{2}{\lambda^2-1} (\lambda^2 - \frac{\lambda^2-1}{x+1})^{-\frac12}.
        \end{split}
        \end{equation*}
        Now,
        \begin{equation*}
        \begin{split}
        I_2(\lambda) &= \frac{\lambda^2}{2}(F_2(\infty) - F_2(0)) = \frac{\lambda^2}{2}\left(-\frac{2}{\lambda^2-1} \frac{1}{\lambda} + \frac{2}{\lambda^2-1}\right)\\
        &= \frac{\lambda^2}{\lambda^2-1}\left(1 - \frac{1}{\lambda}  \right) = \frac{\lambda}{\lambda +1}
        \end{split}
        \end{equation*}
For $p=3$ we apply the change of variable $u= (1+\lambda^2 x )^\frac12$ yielding $x = (u^2-1)/\lambda^2$ and $dx = 2 (1+\lambda^2x)^\frac12/\lambda^2,$ and
 \begin{equation*}
 \begin{split}
F_3(x) :=  \int (1+\lambda^2x)^{-\frac{3}{2}}(1+x)^{-\frac{1}{2}} dx = \frac{2}{\lambda^2} \int \frac{\lambda^2}{u^2(u^2+\lambda^2-1)} du, 
 \end{split}
 \end{equation*}
where the integrand can be decomposed as
$$\frac{1}{u^2(u^2+\lambda^2-1)}  = \frac{1}{(\lambda^2-1)u^2} - \frac{1}{(\lambda^2-1)^2 + (\lambda^2-1)u^2}.$$
Hence,
\begin{equation*}
    \begin{split}
        F_3(x) &= \frac{2}{\lambda^2-1} \int \frac{1}{u^2}- \frac{1}{(\lambda^2-1) + u^2} du \\
        &=   -\frac{2}{\lambda^2-1}u^{-1} - \frac{2}{(\lambda^2-1)^2}\int \frac{1}{(1 + \frac{u^2}{\lambda^2-1})} du . 
    \end{split}
\end{equation*}
For the integral above, we set $z = u/\sqrt{\lambda^2-1}$ giving $du = \sqrt{\lambda^2-1} dz$ and
\begin{align*}
 - \frac{2}{(\lambda^2-1)^2}\int \frac{1}{(1 + \frac{u^2}{\lambda^2-1})} du =& - \frac{2}{(\lambda^2-1)^\frac32}   \int \frac{1}{1 + z^2} dz \\
 =& - \frac{2}{(\lambda^2-1)^\frac32} \arctan(z).
\end{align*}
Consequently,
\begin{equation*}
    \begin{split}
F_3(x) &=  -\frac{2}{(\lambda^2-1)u} - \frac{2}{(\lambda^2-1)^\frac32} \arctan(\frac{u}{\sqrt{\lambda^2-1}}) \\
&= -\frac{2}{\lambda^2-1} \left(\frac{1}{\sqrt{\lambda^2 x +1}} + \frac{1}{\sqrt{\lambda^2-1}} \arctan\left(\frac{\sqrt{\lambda^2x+1}}{\sqrt{\lambda^2-1}}\right)\right).
    \end{split}
\end{equation*}
Hence,
\begin{equation*}
    \begin{split}
        I_3(\lambda) &= \frac{\lambda^2}{2}(F_3(\infty)- F_3(0))\\
        &= -\frac{\lambda^2}{\lambda^2-1}\left(\frac{\pi}{2\sqrt{\lambda^2-1}}  - 1  - \frac{1}{\sqrt{\lambda^2-1}} \arctan\left(\frac{1}{\sqrt{\lambda^2-1}}\right)\right)\\
        &= \frac{\lambda^2}{\lambda^2-1}\left( 1-  \frac{1}{\sqrt{\lambda^2-1}}\left( \frac{\pi}{2} - \arctan\left(\frac{1}{\sqrt{\lambda^2-1}}\right)   \right)\right)\\
        &= \frac{\lambda^2}{\lambda^2-1}\left( 1-  \frac{1}{\sqrt{\lambda^2-1}} \arctan\left({\sqrt{\lambda^2-1}}  \right)\right)
        \\&= \frac{\lambda^2}{\lambda^2-1}\left( 1-  \frac{1}{\sqrt{\lambda^2-1}} \arccos\left(\frac{1}{\lambda} \right)\right).
    \end{split}
\end{equation*}

Lastly, for $p=4$, let $u = 1/(1+x)^\frac 12$. Then $dx = -2(1+x)^\frac32 du$ and we obtain the antiderivative
\begin{equation*}
    \begin{split}
        & \int  (1+\lambda^2x)^{-\frac{3}{2}}(1+x)^{-\frac{3}{2}} dx \\
        =& -2\int \frac{1}{(1+\lambda^2(\frac{1}{u^2}-1))^\frac32} du\\
        =& -2\int \frac{1}{(1-\lambda^2+\frac{\lambda^2}{u^2})^\frac32} du\\
        =& -2 \frac{1}{(\lambda^2-1)^2} \left( \frac{\lambda^2}{\sqrt{\lambda^2 + (1-\lambda^2)u^2}} + \sqrt{\lambda^2 + (1-\lambda^2)u^2} \right)\\
        =:& -2G(u),
    \end{split}
\end{equation*}
since
\begin{equation*}
    \begin{split}
        G'(u) &= \frac{1}{(\lambda^2-1)^2}\left(-\frac{\lambda^2(1-\lambda^2)u}{(\lambda^2+(1-\lambda^2)u^2)^\frac32} + \frac{(1-\lambda^2)u}{(\lambda^2+(1-\lambda^2)u^2)^\frac12} \right)\\
        &= \frac{1}{\lambda^2-1}\left(\frac{\lambda^2u}{(\lambda^2+(1-\lambda^2)u^2)^\frac32} - \frac{u}{(\lambda^2+(1-\lambda^2)u^2)^\frac12} \right)\\
        &= \frac{1}{\lambda^2-1}  \frac{\lambda^2u - u(\lambda^2+(1-\lambda^2)u^2)}{(\lambda^2+(1-\lambda^2)u^2)^\frac32 } = \frac{u^3}{(\lambda^2+(1-\lambda^2)u^2)^\frac32}\\
        &= \frac{1}{(1-\lambda^2+\frac{\lambda^2}{u^2})^\frac32}.
    \end{split}
\end{equation*}
Finally
\begin{equation*}
    \begin{split}
        I_3(\lambda) &= - \lambda^2 (G(0) - G(1)) = -\frac{\lambda^2}{(\lambda^2-1)^2}(2\lambda - \lambda^2-1) = \frac{\lambda^2}{(\lambda^2-1)^2}(\lambda-1)^2\\
        &= \frac{\lambda^2}{(\lambda+1)^2(\lambda-1)^2}(\lambda-1)^2 = \frac{\lambda^2}{(\lambda+1)^2}.
    \end{split}
\end{equation*}
    \end{proof}
\end{theorem}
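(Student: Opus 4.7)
The plan is to apply a single rationalizing substitution that handles all three cases uniformly, before specializing. Substituting $u = (1+\lambda^2 x)^{-1/2}$ maps $[0,\infty)$ bijectively onto $(0,1]$ and gives $1+\lambda^2 x = u^{-2}$, $dx = -2 u^{-3}\lambda^{-2}\,du$, and $1+x = ((\lambda^2-1)u^2 + 1)/(\lambda^2 u^2)$. Substituting into \eqref{eq:durre_integral} and simplifying, all $\lambda$-dependent prefactors collapse into one clean expression
\[
I_p(\lambda) = \lambda^{p-1}\int_0^1 u^{p-1}\bigl((\lambda^2-1)u^2 + 1\bigr)^{(1-p)/2}\,du,
\]
which reduces the problem to an elementary antiderivative computation in each case.

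For $p=2$, the integrand $u/\sqrt{(\lambda^2-1)u^2+1}$ has antiderivative $\sqrt{(\lambda^2-1)u^2+1}/(\lambda^2-1)$; evaluating at the endpoints and simplifying yields $I_2(\lambda) = \lambda/(\lambda+1)$. For $p=4$, the substitution $s = (\lambda^2-1)u^2+1$ turns the inner integral into $\int_1^{\lambda^2}(s-1)s^{-3/2}\,ds/[2(\lambda^2-1)^2]$, which is elementary; the evaluation gives $(\lambda-1)^2/[\lambda(\lambda^2-1)^2]$, and multiplication by $\lambda^3$ yields $\lambda^2/(\lambda+1)^2$ after factoring $(\lambda^2-1) = (\lambda-1)(\lambda+1)$. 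For $p=3$, I would split $u^2/((\lambda^2-1)u^2+1) = (\lambda^2-1)^{-1}\bigl[1 - ((\lambda^2-1)u^2+1)^{-1}\bigr]$ so the integral reduces to $1 - (\lambda^2-1)^{-1/2}\arctan(\sqrt{\lambda^2-1})$, after which the claim follows from the identity $\arctan(\sqrt{\lambda^2-1}) = \arccos(1/\lambda)$, valid for $\lambda>1$.

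The only delicate step I anticipate is the arctan-to-arccos conversion in the $p=3$ case; this is obtained by interpreting $\sqrt{\lambda^2-1}$ as the opposite side of a right triangle with hypotenuse $\lambda$ and adjacent side $1$, so that the angle whose tangent equals $\sqrt{\lambda^2-1}$ also has cosine equal to $1/\lambda$. Everything else is routine bookkeeping, the main algebraic point being that factoring $\lambda^2-1 = (\lambda-1)(\lambda+1)$ produces the cancellation $(\lambda-1)^2/(\lambda^2-1)^2 = 1/(\lambda+1)^2$ needed to bring the $p=4$ answer into the stated form. No moment conditions or probabilistic input from the rest of the paper are required; the result is purely a direct evaluation of three elementary integrals.
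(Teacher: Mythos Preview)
Your proposal is correct and takes a cleaner, more unified route than the paper. The paper handles each $p$ with a separate ad hoc substitution: for $p=2$ and $p=4$ it sets $u=(1+x)^{-1/2}$ and then finishes with a further change of variable or by guessing and verifying an antiderivative, while for $p=3$ it instead sets $u=(1+\lambda^2 x)^{1/2}$ and uses partial fractions. By contrast, your single substitution $u=(1+\lambda^2 x)^{-1/2}$ collapses all three cases into the master formula $I_p(\lambda)=\lambda^{p-1}\int_0^1 u^{p-1}\bigl((\lambda^2-1)u^2+1\bigr)^{(1-p)/2}\,du$, from which each case is a one-line elementary antiderivative. This reveals structure the paper's presentation hides and avoids the somewhat opaque ``verify $G'(u)$ equals the integrand'' step in the paper's $p=4$ argument. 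The arctan-to-arccos identity you flag is exactly the same step the paper uses at the end of its $p=3$ computation, so nothing is lost there. Overall your approach is shorter and more transparent; the paper's version is simply a direct case-by-case computation without the unifying reduction.
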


\section{On the norm of the minimizer}\label{sec:norm_minimizer}
In the case of $\phi_1(S_P) > 1/2$, the objective is to minimize
$$h_P(t) = \mathrm{E} \left\{\sqrt{(\Vert X \Vert^2 + t)^2 - 4t(o_1'X)^2} - \Vert X \Vert^2 \right\}$$
over $t > 0$. An integral presentation for the objective function is obtained via the density \eqref{eq:densityofX} of $X$. Due to the fact that the norm of the minimizer is a measure of scale, we may simply assume that $\sigma =1$.  Now, since $\Vert X \Vert^2 = \lambda^2 Z_1^2 + \sum_{j=2}^p Z_j^2$ and $X_1^2 = \lambda^2 Z_1^2$, we write
\begin{footnotesize}
\begin{equation*}
\begin{split}
& h_P(t) \\
=& \mathrm{E} \sqrt{(\Vert Z \Vert^2 +(\lambda^2-1)Z_1^2+ t)^2 - 4t\lambda^2Z_1^2}-\Vert Z \Vert^2 -(\lambda^2-1)Z_1^2\\
=& \mathrm{E} \sqrt{(\Vert Z \Vert^2 +(\lambda^2-1)\frac{Z_1^2}{\Vert Z \Vert^2}\Vert Z \Vert^2+ t)^2 - 4t\lambda^2\frac{Z_1^2}{\Vert Z \Vert^2}\Vert Z \Vert^2} -\Vert Z \Vert^2 -(\lambda^2-1)\frac{Z_1^2}{\Vert Z \Vert^2}\Vert Z \Vert^2\\
=& \mathrm{E} \left[ \mathrm{E} \left(\sqrt{(R^2 +(\lambda^2-1)R^2W+ t)^2 - 4t\lambda^2R^2W}-R^2-(\lambda^2-1)R^2W \Big| \Vert Z \Vert = R \right) \right],
\end{split}
\end{equation*}
\end{footnotesize}
where $W$ follows Beta $(\frac{1}{2}, \frac{p-1}{2})$ distribution. This gives
\begin{footnotesize}
\begin{equation*}
\begin{split}
    & h_P(t) \\
    =&  \frac{\Gamma(\frac{p}{2})}{\Gamma(\frac12)\Gamma(\frac{p-1}{2})}\mathrm{E} \left[ \int_0^1 \left(\sqrt{(R^2 +(\lambda^2-1)R^2w+ t)^2 - 4t\lambda^2R^2w}-R^2-(\lambda^2-1)R^2w\right) \right. \\
    & w^{-\frac12}(1-w)^{\frac{p-3}{2}} dw \biggr] \\
    =&  \frac{2\pi^{\frac{p}{2}}}{\Gamma(\frac12)\Gamma(\frac{p-1}{2})} \int_0^\infty r^{p-1} q(r) \int_0^1\left(\sqrt{(r^2 +(\lambda^2-1)r^2w+ t)^2 - 4t\lambda^2r^2w} \right. \\
    & -r^2-(\lambda^2-1)r^2w \biggl) w^{-\frac12}(1-w)^{\frac{p-3}{2}} dw dr,
\end{split}
\end{equation*}
\end{footnotesize}
where the density of $\Vert Z \Vert$ is of the form
$$f_{\Vert Z \Vert} (r) = \frac{2\pi^{\frac{p}{2}}}{\Gamma(\frac{p}{2})}r^{p-1}q(r) $$
with $f_Z(z) = q(\Vert z \Vert)$.

\section{Radius of the optimization domain}\label{sec:radius}

In Section \ref{sec:sample} we carry out the minimization over the set $\mathcal{B}(H_n + 1/2)$ defined as follows. Denote the empirical distribution of $\| X_1 \|, \ldots, \| X_n \|$ by $G_n$ and let $R_{n0} := Q_{0.75}(G_n)$.
Furthermore, define
\begin{align*}
    y_n(h) := [ 2 P_n \{ X \in \mathcal{B}(R_{n0}) \} - 1 ] h^2 - 2 h R_{n0} - R_{n0}^2 - f_{P_n}(0),
\end{align*}
and let then $H_n := H_{n0} + 1$ where $H_{n0}$ is the largest root of the polynomial $y_n(h)$. 
Arguing as in the proof of Theorem \ref{theo:existence}, all minimizers of $f_{P_n}$ then reside in the set $\mathcal{B}(H_n + 1/2)$.


\section{Second-order Taylor expansion of the objective function}\label{sec:taylor}

In the proofs of Lemma \ref{lemma:uniformintegrability} and Theorem \ref{theo:second_derivatives} the notation $C$ denotes arbitrary fixed constants that may change from line to line.

\begin{lemma}
    \label{lemma:uniformintegrability}
    Let $p>1$, and let $Z$ be a $p$-dimensional random vector with a bounded probability density function. Then, the families 
    $$\mathcal{C}_1 :=\left\{\frac{1}{\|Z-v\|}\bigg| v\in\mathbb{R}^p\right\}\quad\text{and}\quad \mathcal{C}_2:=\left\{\frac{1}{\|Z+v\|}\bigg| v\in\mathbb{R}^p\right\}$$
    of random variables are uniformly integrable. Consequently, they are bounded in $L^1.$
    \begin{proof}
    Let $A_{p-1}$ denote the surface area of the unit ball in $\mathbb{R}^p$ and let $f_Z$ be the probability density function of $Z$. Now,
    \begin{equation*}
        \begin{split}
        &\e \frac{1}{\|Z-v\|} \mathbb{I}\left(\frac{1}{\|Z-v\|} \geq K\right) = \int_{\mathbb{R}^p}\frac{1}{\|z-v\|} \mathbb{I}\left({\|z-v\|} \leq \frac{1}{K}\right)f_Z(z) dz \\
        &\leq \sup_{z\in\mathbb{R}^p}|f_Z(z)| \int_{\mathcal{B}(v, 1/K)}\frac{1}{\|z-v\|} dz \leq C \int_{\mathcal{B}(0,1/K)}\frac{1}{\|y\|} dy\\
        &= C \int_0^{1/K}A_{p-1}r^{p-1}\frac{1}{r} dr = \frac{CA_{p-1}}{p-1} \left(\frac{1}{K}\right)^{p-1}\to 0 
        \end{split}
    \end{equation*}
    as $K\to\infty$. This shows that the family $\mathcal{C}_1$ is uniformly integrable, and the family $\mathcal{C}_2$ can be treated similarly.
    \end{proof}
\end{lemma}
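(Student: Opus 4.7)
The plan is to verify the standard tail characterization of uniform integrability: a family $\{X_\alpha\}$ is uniformly integrable if and only if $\lim_{K\to\infty}\sup_\alpha \mathrm{E}[|X_\alpha|\mathbb{I}(|X_\alpha|\geq K)]=0$. Thus, writing $C:=\sup_{z\in\mathbb{R}^p}|f_Z(z)|<\infty$, the goal reduces to showing that
\begin{align*}
\sup_{v\in\mathbb{R}^p} \mathrm{E}\!\left[\frac{1}{\|Z-v\|}\,\mathbb{I}\!\left(\frac{1}{\|Z-v\|}\geq K\right)\right] \xrightarrow{K\to\infty} 0,
\end{align*}
and the analogous statement for $\mathcal{C}_2$.

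First I would focus on $\mathcal{C}_1$, noting that $\mathcal{C}_2$ follows by the same argument upon replacing $v$ by $-v$ (or equivalently, since the uniformity is taken over all $v\in\mathbb{R}^p$, the two suprema are literally equal). The event $\{1/\|Z-v\|\geq K\}$ is simply $\{\|Z-v\|\leq 1/K\}$, so the expectation in question equals an integral of $1/\|z-v\|$ against $f_Z$ over the ball $\mathcal{B}(v,1/K)$. The crucial step is to bound $f_Z$ by $C$, eliminating $f_Z$'s dependence on $z$, and then to translate by $y=z-v$, which removes $v$ from the remaining domain entirely. This reduces the whole question to evaluating $\int_{\mathcal{B}(0,1/K)} \|y\|^{-1}dy$.

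The remaining integral is computed in polar coordinates, yielding $A_{p-1}\int_0^{1/K} r^{p-2}dr$, where $A_{p-1}$ is the surface area of the unit sphere in $\mathbb{R}^p$. The exponent $p-2$ is at least $-1+\varepsilon$ because of the assumption $p>1$, so the integral converges and equals $(p-1)^{-1}(1/K)^{p-1}$. Plugging back, the tail expectation is bounded above by $C A_{p-1}(p-1)^{-1}K^{-(p-1)}$, uniformly in $v$, and this vanishes as $K\to\infty$.

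The main obstacle is purely notational: one must be careful that after bounding $f_Z$ by $C$ the remaining estimate is genuinely uniform in $v$. Translation invariance of Lebesgue measure makes this automatic. For the final claim, boundedness in $L^1$ follows from uniform integrability by a standard split: fix $K_0$ with $\sup_v\mathrm{E}[|X_v|\mathbb{I}(|X_v|\geq K_0)]\leq 1$, and then $\mathrm{E}|X_v|\leq K_0+1$ for every $v$, so the families $\mathcal{C}_1$ and $\mathcal{C}_2$ are uniformly bounded in $L^1$.
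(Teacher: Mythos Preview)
Your proof is correct and follows essentially the same approach as the paper: bound the density by its supremum, translate to remove the dependence on $v$, and evaluate the resulting radial integral in polar coordinates to obtain the uniform bound $C A_{p-1}(p-1)^{-1}K^{-(p-1)}\to 0$. Your explicit remark on why the bound is uniform in $v$ and your short argument for $L^1$-boundedness are slightly more detailed than the paper's version, but the substance is identical.
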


In Theorem \ref{theo:second_derivatives}, the notation $H f$ denotes the Hessian of the function $f$.

\begin{theorem}\label{theo:second_derivatives}
    Let $X\sim P\in \mathcal{E}(O, \Lambda)$ have a bounded probability density function. Then, the map $v \mapsto f_P(v) = \mathrm{E} \left( \| X - v \| \| X + v \| - \| X \|^2 \right)$ is twice continuously differentiable. In particular, let $v^*\in\mathbb{R}^p$, then $f_P(v)$ admits the second order expansion
    \begin{align*}
        & f_P(v) \\
        =& f_P(v^*) + \nabla f_P(v^*)' (v - v^*) + \frac{1}{2} (v - v^*)' Hf_P(v^*) (v - v^*) + o(\| v - v^* \|^2). 
    \end{align*}
    about $v^*$.
\end{theorem}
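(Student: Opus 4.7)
The plan is to differentiate the integrand $g(v, X) := \|X - v\| \|X + v\|$ twice pointwise in $v$ (valid for $X \notin \{\pm v\}$, a null set under the assumed bounded density of $X$), then pass derivatives through the expectation via Lebesgue-type arguments powered by Lemma \ref{lemma:uniformintegrability}. A direct computation gives the gradient
$\nabla_v g(v, X) = \tfrac{\|X + v\|}{\|X - v\|}(v - X) + \tfrac{\|X - v\|}{\|X + v\|}(v + X),$
while the Hessian $H_v g(v, X)$ has entries equal to the matrix-valued integrands behind the formula $M_1 - M_2 - M_3 + M_4 + M_4'$ appearing in the proof of Theorem \ref{thm:thm8}.

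Using the elementary estimate $\|X \pm v\| \leq \|X \mp v\| + 2\|v\|$, each entry of $\nabla_v g(v, X)$ is dominated by $2(\|X\| + \|v\|)$, which is integrable because $\mathrm{E}\|X\| < \infty$ by the finite first moment of $R$ in Item 4 of the definition of $\mathcal{E}(O, \Lambda)$. Each entry of $H_v g(v, X)$ is dominated, up to a locally bounded constant $C(v)$, by $1 + 1/\|X - v\| + 1/\|X + v\|$, which is integrable by Lemma \ref{lemma:uniformintegrability}; moreover, over any bounded neighborhood $U$ of a fixed $v^* \in \mathbb{R}^p$, the same lemma shows that the family $\{H_v g(v, \cdot) : v \in U\}$ is uniformly integrable entry-wise. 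Since $v \mapsto H_v g(v, X)$ is continuous at $v^*$ for every $X \notin \{\pm v^*\}$, Vitali's convergence theorem gives continuity of $v \mapsto \mathrm{E}[H_v g(v, X)]$ at $v^*$; a simpler dominated convergence argument with the integrable dominator $2(\|X\| + \|v\|)$ yields continuity of $v \mapsto \mathrm{E}[\nabla_v g(v, X)]$. Applying Fubini to the identity $g(v, X) - g(v^*, X) = \int_0^1 \nabla_v g(v^* + t(v - v^*), X)'(v - v^*) \, dt$ (permissible by the uniform $L^1$-bound above), iterated to second order, then identifies these continuous quantities with $\nabla f_P(v)$ and $H f_P(v)$, respectively.

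With $H f_P$ continuous at $v^*$, the asserted expansion follows from the integral form of Taylor's theorem applied to the scalar map $t \mapsto f_P(v^* + t(v - v^*))$: the remainder equals $\int_0^1 (1-t) (v-v^*)' [H f_P(v^* + t(v-v^*)) - H f_P(v^*)] (v-v^*) \, dt$, which is $o(\|v-v^*\|^2)$ by continuity of $H f_P$ at $v^*$. The main obstacle is that the natural pointwise dominator $\sup_{v \in U} 1/\|X - v\|$ is never integrable when $U$ has nonempty interior, so ordinary dominated convergence alone cannot justify the exchange of expectation and differentiation; the resolution is Vitali's theorem combined with the uniform integrability that Lemma \ref{lemma:uniformintegrability} extracts precisely from the bounded-density assumption on $X$.
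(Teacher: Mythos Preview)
Your proposal is correct and follows the same overall plan as the paper: differentiate $g$ pointwise, dominate $\nabla_v g$ by $2(\|X\|+\|v\|)$ and the Hessian entries by a locally bounded multiple of $1+1/\|X-v\|+1/\|X+v\|$ via Lemma \ref{lemma:uniformintegrability}, and appeal to uniform integrability (Vitali) for the continuity of $H f_P$. One small correction to your closing remark: the paper \emph{does} pass the second derivative through the expectation by ordinary DCT, bounding the difference quotient of $h_i$ by $6 + 4\|v^*\|\bigl(1/\|X-v^*\| + 1/\|X+v^*\|\bigr)$, a dominator depending only on the \emph{fixed} base point $v^*$; your Fubini/integral-mean-value route is a legitimate alternative that sidesteps this estimate.
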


\begin{proof}[Proof of Theorem \ref{theo:second_derivatives}]
    Let $g(v) = \| X - v \| \| X + v \| - \| X \|^2,$
    $$h(v) = -\| X + v\| \frac{X-v}{\| X-v\|} + \|X-v\| \frac{X+v}{\| X+v\|}$$
    and
    \begin{equation*}
    \begin{split}
    l(v) &= \frac{\|X+v\|}{\|X-v\|} \left(I_p - \frac{1}{\| X-v\|^2} (X-v)(X-v)' \right)\\
    &+ \frac{\|X-v\|}{\|X+v\|} \left(I_p - \frac{1}{\| X+v\|^2} (X+v)(X+v)' \right)\\
    &- \frac{1}{\|X-v\|\|X+v\|}\left((X-v)(X+v)'+(X+v)(X-v)'\right).    
    \end{split}
    \end{equation*}
    Then $f_P(v) = \mathrm{E} g(v)$, and tedious computations verify that $\nabla g(v) = h(v)$ and $Hg(v) = l(v)$ almost surely.

    We start by showing that $\nabla f_P(v) =\mathrm{E}\nabla g(v) = \e h(v)$ exists and is continuous in $v$. Let $v^*\in\mathbb{R}^p$ be fixed and let $v^{(i)}=(v_1^*,\dots,v_{i-1}^*,v_i, v_{i+1}^*,\dots,v_p^*)$ with $v_i\to v_i^*$. Now,
    \begin{equation*}
     \frac{f_P(v^{(i)})- f_P(v^*)}{ v_i-v_i^*} = \e \left\{ \frac{g(v^{(i)})-g(v^*)}{v_i- v_i^*}\right\},   
    \end{equation*}
    where
    $$ \frac{g(v^{(i)})-g(v^*)}{ v_i- v_i^*} \to \frac{\partial}{\partial v_i} g(v^*)$$ 
    almost surely, and
    \begin{equation*}
    \begin{split}
      &|g(v)-g(v^*)| = \left| \|X-v\|\|X+v\|-\|X-v^*\|\|X+v^*\|\right|\\
      &\leq  \|X-v\| \left| \|X+v\| - \|X+v^*\| \right| + \|X+v^*\| \left| \|X-v\| - \|X-v^*\| \right|\\
      &\leq \|X-v\| \|v-v^*\|+\|X+v^*\|\|v-v^*\|,
      \end{split}
    \end{equation*}
    by the reverse triangle inequality. Hence,
    \begin{equation*}
    \label{uniformbound}
       \left| \frac{g(v^{(i)})-g(v^*)}{ v_i- v_i^*}\right| \leq 2\|X\| +\|v^{(i)}\|+\|v^*\|  \leq 2\|X\| + C 
    \end{equation*}
    in a neighborhood of $v^*$. As, by our assumptions, $\mathrm{E}\| X \| < \infty$, the dominated convergence theorem yields
    \begin{equation*}
      \lim_{v_i\to v_i^*} \frac{f_P(v^{(i)})- f_P(v^*)}{ v_i-v_i^*} = \e \frac{\partial}{\partial v_i} g(v^*), 
    \end{equation*}
    and furthermore $\nabla f_P(v^*) = \e \nabla g(v^*) = \e h(v^*).$ Continuity follows also from the dominated convergence theorem, since $h(v_n)\to h(v^*)$ almost surely as $v_n\to v^*$ and 
    $$\| h(v_n) \| \leq 2\|X\| + 2\|v_n\| \leq 2\|X\| + C$$
    in a neighborhood of $v^*$.

    Next, we show that $H f_P(v) = \e Hg(v) = \e l(v).$ Notice that the $i$th partial derivative of $f_P(v)$ is 
    $$\e h_i(v) = \e \left\{ -\| X + v\| \frac{X_i-v_i}{\| X-v\|} + \|X-v\| \frac{X_i+v_i}{\| X+v\|} \right\}.$$
    Let $v^*$ and $v^{(j)}$ be as before. Then
    \begin{equation*}
        \frac{\frac{\partial}{\partial i}f_P(v^{(j)}) - \frac{\partial}{\partial i}f_P(v^*) }{v_j-v_j^*} = \e \left\{ \frac{h_i(v^{(j)})-h_i(v^*)}{v_j- v_j^*}\right\},
    \end{equation*}
    where
    $$\frac{h_i(v^{(j)})-h_i(v^*)}{v_j- v_j^*} \to \frac{\partial}{\partial j} h_i(v^*)$$
    almost surely. Let $j=i$. Then
    \begin{equation*}
        \begin{split}
        &|h_i(v^{(i)})-h_i(v^*)| \leq\left| -\|X+v^{(i)} \| \frac{X_i-v_i}{\| X- v^{(i)} \|}    + \|X+v^* \| \frac{X_i-v_i^*}{\| X- v^{*}\|} \right|  \\
        &+ \left| \|X-v^{(i)} \| \frac{X_i+v_i}{\| X+ v^{(i)} \|}    - \|X-v^* \| \frac{X_i+v_i^*}{\| X+ v^{*}\|} \right| =: A + B,
        \end{split}
    \end{equation*}
    where, by applying the reverse triangle inequality,
    \begin{equation}
    \label{A}
        \begin{split}
        A &\leq |\| X+v^*\| - \| X+v^{(i)}\| | \frac{|X_i-v_i|}{\|X-v^{(i)}\|} + \|X+v^*\| \left| \frac{X_i-v_i^*}{\|X-v^*\|}- \frac{X_i-v_i}{\|X-v^{(i)}\| } \right|\\
        &\leq |v_i-v_i^*| + \|X+v^*\| \left| \frac{(X_i-v_i^*)\|X-v^{(i)}\| - (X_i-v_i)\|X-v^*\|}{\|X-v^*\|\|X-v^{(i)}\|} \right|
        \end{split}
    \end{equation}
    and 
    \begin{equation*}
        \begin{split}
    &|(X_i-v_i^*)\|X-v^{(i)}\| - (X_i-v_i)\|X-v^*\||\\ 
    &\leq \| X-v^{(i)}\| |(X_i-v_i^*)-(X_i-v_i)| + (X_i-v_i)|\|X-v^{(i)}\|-\|X-v^*\||\\
    &\leq \| X-v^{(i)}\| |v_i-v_i^*| + |X_i-v_i||v_i-v_i^*|.
        \end{split}
    \end{equation*}
    Now, \eqref{A} gives 
    \begin{equation}
    \label{Adivided}
    \begin{split}
     \frac{A}{|v_i-v_i^*|} &\leq 1 + \|X+v^*\| \frac{\|X-v^{(i)} \| + |X_i-v_i|}{\|X-v^*\|\|X-v^{(i)}\|} \leq 1 + 2\frac{\|X+v^*\|}{\|X-v^*\|}\\
     &\leq 1+2 \frac{\|X-v^*\|+2\|v^*\|}{\|X-v^*\|}= 3+ 4\frac{\|v^*\|}{\|X-v^*\|}.
     \end{split}
    \end{equation}
    The term $B$ can be bounded similarly giving
    $$\frac{B}{|v_i-v_i^*|} \leq 3+ 4\frac{\|v^*\|}{\|X+v^*\|}.$$
    This, together with \eqref{Adivided} implies
    $$\left|\frac{h_i(v^{(i)})-h_i(v^*)}{v_i-v_i^*}\right| \leq 6 + 4\|v^*\| \left(\frac{1}{\|X-v^*\|} + \frac{1}{\|X+v^*\|} \right),$$
    which is integrable by Lemma \ref{lemma:uniformintegrability}. Now, the dominated convergence theorem yields
    $$ \lim_{v_i\to v_i^*}\frac{\frac{\partial}{\partial i}f_P(v^{(i)}) - \frac{\partial}{\partial i}f_P(v^*) }{v_i-v_i^*}= \e \frac{\partial}{\partial i} h_i(v^*)= \e\frac{\partial^2}{\partial i^2} g(v^*) = \e l_{ii}(v^*). $$
    The mixed derivatives can be treated in a similar way resulting into $H f_P(v^*) = \e Hg(v^*)= \e l(v^*).$
    
    For continuity of $H f_P(v)$ at $v^*$, we first note that $l(v_n)\to l(v^*)$ almost surely and hence also in probability, as $v_n\to v^*$. We show that the elements of such sequence $l(v_n)$ are uniformly integrable implying that $l_{ij}(v_n)\to l_{ij}(v^*)$ in $L^1$, and further giving that $H f_P(v_n) = \e l(v_n) \to \e l(v^*)= H f_P(v^*).$ Let us consider the first term
    \begin{equation*}
    \begin{split}
    a(v_n) &:= \frac{\|X+v_n\|}{\|X-v_n\|} \left(I - \frac{1}{\| X-v_n\|^2} (X-v_n)(X-v_n)' \right)\\
    &\leq \left(1+2\frac{\|v_n\|}{\| X - v_n\|} \right) \left(I - \frac{1}{\| X-v_n\|^2} (X-v_n)(X-v_n)' \right)
    \end{split}
    \end{equation*}
    of the expression of $l(v_n)$. The sequence $\| v_n\| / \|X-v_n\|$ in the scalar factor is uniformly integrable by Lemma \ref{lemma:uniformintegrability}. Moreover, the matrix term is uniformly bounded over $n$. Hence, an element of $a(v_n)$ is of the form $B_n + B_nY_n,$ where $B_n$ is uniformly bounded and $Y_n$ is uniformly integrable. Consequently, $B_n$ and $B_nY_n$ are uniformly integrable, and thus is also $B_n + B_nY_n.$ 

    Next, a similar argument holds for the second term of $l(v_n)$, whereas the last term is uniformly bounded. Hence, an element of $l(v_n)$ can be expressed as a sum of uniformly integrable sequences, making itself uniformly integrable.
    
    
\end{proof}

\bibliographystyle{apalike}
\bibliography{references}

\end{document}